\documentclass[a4paper,11pt]{amsart}

\usepackage{latexsym}
\usepackage{a4wide}
\usepackage{amscd}
\usepackage{graphics}
\usepackage{amsmath}
\usepackage{amssymb}
\usepackage{mathrsfs}

\newcommand{\R}{\mathbb R}
\newcommand{\C}{\mathbb C}
\newcommand{\N}{\mathbb N}
\newcommand{\Z}{\mathbb Z}
\newcommand{\A}{\mathcal A}
\newcommand{\sphere}[1]{\mathbb{S}^{#1}}
\newcommand{\e}{\mathrm e}
\newcommand{\eps}{\varepsilon}
\newcommand{\abs}[1]{\left\vert #1 \right\vert}
\newcommand{\vett}[1]{\overrightarrow{#1}}
\newcommand{\nor}[1]{\left\Vert #1 \right\Vert}
\newcommand{\cinf}[1]{\textit{C}^{\infty}_{C}(#1)}
\newcommand{\dom}{\textit{D}^{1,2}(\R^{N})}
\newcommand{\domk}{\textit{D}_k^{1,2}(\R^{N})}
\newcommand{\domdr}{\textit{D}_{r_1,r_2}^{1,2}(\R^{N})}
\newcommand{\qa}{\int_{\R^{N}}\bigg\vert\bigg(i\nabla-\frac{A(\theta)}{\abs{x}}\bigg)u\bigg\vert^2}

\newcommand{\curl}[1]{\mathrm{curl}#1}

\newtheorem{thm}{\bf Theorem}[section]      
\newtheorem{lem}[thm]{\bf Lemma}            
\newtheorem{prop}[thm]{\bf  Proposition}     
\newtheorem{defn}[thm]{\bf Definition}      
\newtheorem{properties}[thm]{\bf Properties}      
\newtheorem{rem}[thm]{\bf Remark}       


\begin{document}

\title[Solutions to NLS with singular potentials and critical exponent]{Solutions to nonlinear Schr\"{o}dinger equations with singular electromagnetic potential and critical exponent}
\author{Laura Abatangelo, Susanna Terracini}

\address{L. Abatangelo and S. Terracini: Dipartimento di Matematica e Applicazioni, Universit\`a di Milano Bicocca, Piazza Ateneo Nuovo, 1, 20126 Milano (Italy)}
\email{l.abatangelo@campus.unimib.it, susanna.terracini@unimib.it}

\subjclass[2000]{35J75, 35B06}

\date{\today}
\maketitle

\begin{abstract}
\noindent We investigate existence and qualitative behaviour of
solutions to nonlinear Schr\"{o}dinger equations with critical exponent and singular
electromagnetic potentials. We are concerned with with magnetic vector potentials which are
homogeneous of degree $-1$, including the Aharonov-Bohm class. In particular,
 by variational arguments we prove
a result of multiplicity of solutions distinguished by symmetry
properties.
\end{abstract}

\section{Introduction}\label{sec:intro}
In norelativistic quantum mechanics, the Hamiltonian associated with a charged particle in an 
electromagnetic field is given by $(i\nabla - A)^2+V$ 
where $A:\ \R^N\rightarrow\R^N$ is the magnetic potential and 
$V:\ \R^N\rightarrow\R$ is the electric one.  The vector field $B=\curl{A}$ has to be 
intended as  the differential $2$--form $B=da$,  $a$ being the $1$--form canonically 
associated with the vector field $A$. Only in three dimensions, by duality,
$B$ is represented by another vector field.

In this paper we are concerned with differential operators of the form
$$
\left(i\nabla-\frac{A(\theta)}{\abs{x}}\right)^2-\frac{a(\theta)}{\abs{x}^2}
$$
where $A(\theta)\in
L^{\infty}(\sphere{N-1},\R^{N})$ and $a(\theta)\in L^{\infty}(\sphere{N-1},\R)$. Notice the 
presence of homogeneous (fuchsian) singularities at the origin. In some situations the 
potentials may also have singularities on the sphere.

This kind of magnetic potentials appear as limits of thin solenoids, when
the circulation remains constant as the sequence  of solenoids' radii tends to zero, 
The limiting vector field is then a singular measure supported in a lower dimensional set. 
Though the resulting magnetic field vanishes almost everywhere, its presence still affects the 
spectrum of the operator,  giving rise to the so-called ``Aharonov-Bohm effect''.

Also from the mathematical point of view this class of operators is 
worty being investigated, mainly because of their critical behaviour. Indeed, 
they share  with the Laplacian the same degree of homogeneity and invariance under the 
Kelvin transform. Therefore they cannot be regarded as lower 
order perturbations of the Laplace operator (they do not belong to the Kato's class:
see fo instance \cite{FFT08}, \cite{FMT08} and references therein).

Here we shall always assume $N\geq3$, otherwise specified. 
A quadratic form is associated with the differential operator, that is
\begin{equation}\label{fq}
\int_{\R^{N}}\bigg\vert\bigg(i\nabla-\frac{A(\theta)}{\abs{x}}\bigg)u\bigg
\vert^2-\int_{\R^{N}}\frac{a(\theta)}{\abs{x}^2}\,u^2.
\end{equation}

As its natural domain we shall take the closure of compactly supported functions
$\cinf{\R^N\setminus\{0\}}$ with respect to the quadratic form itself. 
Thanks to Hardy type inequalities, when $N\geq 3$, this space turns out to be the same 
$\dom$, provided   $a$  
is suitably bounded (\cite{FFT08}), while,  when $N=2$, this is a smaller space of functions vanishing
at the pole of the magnetic potential. Throughout the paper we shall always assume positivity of \eqref{fq}.

We are interested in solutions to the critial semilinear differential equations
\begin{equation}\label{eqintro}
 \left(i\nabla-\frac{A(\theta)}{\abs{x}}\right)^2u-\frac{a(\theta)}{\abs{x}^2}u=\abs{u}^{2^*-2}u \qquad \textrm{in $\R^N\setminus\{0\}$}
\end{equation}
and in particular in their symmetry properties. The critical exponent appears as the 
natural one whenever seeking finite
energy solutions: indeed, Pohozaev type identitities prevent the existence of entire solutions
for power nonlinearities of different degrees.

The first existence results for equations of type (\ref{eqintro}) are given in \cite{EL89} for
subcritical nonlinearities. In addition, existence and multiplicity of solutions are investigated 
for instance in \cite{C03,CIS09,K00,ST02} mainly via variational methods and concentration-compactness 
arguments. Some results involving critical nonlinearities are present in \cite{AS03,ChS05}.
Concerning results on semiclassical solutions we quote \cite{CS02,CS05}.
As far as we know, not many papers are concerned when electromagnetic potentials which 
are singular,   except  those in \cite{K97}, where anyway several integrability hypotheses are assumed on 
them, and, much more related with ours, the paper \cite{CS10} that we discuss later on.
 
We are interested in the existence of solutions to Equation (\ref{eqintro}) 
distinct by symmetry properties,
as it happens in \cite{Ter96} for Schr\"{o}dinger operators when magnetic vector potential
is not present. To investigate these questions, we
aim to extend some of the results contained in \cite{Ter96} when a
singular electromagnetic potential is present.

To do this, we refer to solutions which minimize
the Rayleigh quotient
$$
 \frac{\displaystyle\int_{\R^{N}}\bigg\vert\bigg(i\nabla-\frac{A(\theta)}{\abs{x}}\bigg)u\bigg\vert^2-\int_{\R^{N}}\frac{a(\theta)}{\abs{x}^2}\,u^2}{\bigg(\displaystyle\int_{\R^N}\abs{u}^{2^*}\bigg)^{2/2^*}}\,.$$

We find useful to stress that, although, in general,
 ground states in $\dom$ to equation (\ref{eqintro}) 
do not exist
(see Section 3), the existence of minimizers con be granted in suitable subspaces of symmetric functions.
 
We are concerned with Aharonov-Bohm type potentials too. 
In $\R^2$ a vector potential associated to the Aharonov-Bohm magnetic field has the form
$$\mathcal A(x_1,x_2)=\alpha\left(-\frac{x_2}{\abs{x}^2},\frac{x_1}{\abs{x}^2}\right)$$
where $\alpha\in\R$ stands for the circulation of $\mathcal A$
around the thin solenoid.
Here we consider the analogous of these potentials in $\R^N$ for
$N\geq4$, that is
$$\A(x_1,x_2,x_3)=\left(\frac{-\alpha x_2}{x_1^2+x_2^2}\, , \frac{\alpha x_1}{x_1^2+x_2^2}\, , 0\right) \qquad (x_1,x_2)\in\R^2\, , x_3\in\R^{N-2}\ .$$

Our main result can be stated as follows:
\begin{thm}
 Assume $N\geq4$ and $a(\theta)\equiv a \in \R^-$. There exist $a^*<0$ such that, when $a<a^*$, the equation (\ref{eqintro}) admits at least two distinct solutions in $\dom$: one is radially symmetric while the second one is only invariant under a discrete group of rotations on the first two variables.

A similar result holds for Aharonov-Bohm type potentials.
\end{thm}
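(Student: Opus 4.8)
\medskip
\noindent\textbf{Plan of proof.}

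The plan is to pass to the Emden--Fowler (cylindrical) variables $u(x)=\abs{x}^{-(N-2)/2}v(-\log\abs{x},x/\abs{x})$, under which $\dom$ is identified with $H^{1}(\R\times\sphere{N-1})$ and \eqref{eqintro} becomes
\[
-\partial_{ss}v+\mathcal L v=\abs{v}^{2^{*}-2}v\qquad\text{on }\ \R\times\sphere{N-1},
\]
with $\mathcal L=\big(i\nabla_{\sphere{N-1}}+A(\theta)\big)^{2}-a(\theta)+\tfrac{(N-2)^{2}}{4}$ the angular operator on $\sphere{N-1}$: it is self--adjoint with discrete spectrum $0<\Lambda_{1}\le\Lambda_{2}\le\cdots$, positivity of $\Lambda_{1}$ being exactly the standing positivity of \eqref{fq}. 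The advantage of this reduction is that the Fuchsian singularity at the origin is straightened out, so the only sources of non-compactness left are the translations in $s$ and the critical exponent. For a closed subgroup $G\le O(N)$ leaving the electromagnetic potential invariant, set
\[
S_{G}=\inf\Big\{\,Q(u)\Big(\int_{\R^{N}}\abs{u}^{2^{*}}\Big)^{-2/2^{*}}\ :\ u\in\dom\setminus\{0\}\ \text{and $G$--invariant}\,\Big\},
\]
$Q$ denoting the quadratic form \eqref{fq}. By the principle of symmetric criticality, a minimizer of $S_{G}$, suitably rescaled, solves \eqref{eqintro}; the scheme is to produce two such minimizers, for two different groups, whose $S_{G}$--values differ, so that the corresponding solutions cannot coincide.

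\medskip
\noindent\emph{The radial solution.} Minimizing over the maximal symmetry class — $G=O(N)$ when $A\equiv0$, i.e.\ $v=v(s)$ — collapses the problem to the one--dimensional equation $-v''+\Lambda_{1}v=\abs{v}^{2^{*}-2}v$ on $\R$, whose unique even soliton $v_{\Lambda_{1}}(s)=\Lambda_{1}^{(N-2)/4}\phi\big(\sqrt{\Lambda_{1}}\,s\big)$ is a genuine minimizer (vanishing and dichotomy excluded as usual). Undoing the transform gives a radial solution $u_{\mathrm{rad}}$ of \eqref{eqintro}, and a scaling computation yields $S_{\mathrm{rad}}:=S_{O(N)}=c_{N}\,\Lambda_{1}^{(N-1)/N}$; when $a(\theta)\equiv a$ this is of order $\abs{a}^{(N-1)/N}$, hence $S_{\mathrm{rad}}\to+\infty$ as $a\to-\infty$.

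\medskip
\noindent\emph{The second solution.} Here I would take $G=G_{k}\times O(N-2)$, with $G_{k}\cong\Z_{k}$ generated by the rotation of angle $2\pi/k$ in the $(x_{1},x_{2})$--plane and $O(N-2)$ acting on $(x_{3},\dots,x_{N})$. This is where $N\ge4$ is essential: $O(N-2)$ then acts on a space of dimension $\ge2$, so every orbit meeting $\R^{N}\setminus(\R^{2}\times\{0\})$ is infinite and a $G$--invariant function cannot concentrate away from the axis $\{x_{1}=x_{2}=0\}$, while on that axis concentration at the origin is ruled out by the singular coefficient. A Struwe--type (concentration--compactness) analysis of $G$--invariant minimizing sequences then gives: $S_{G}$ is attained whenever it stays strictly below the lowest concentration level compatible with the symmetry, which is the minimum of $k^{2/N}S$ (cost of $k$ bubbles sitting on a $G_{k}$--orbit, $S$ the Sobolev constant), of a level $\mathcal S_{0}(a)>S$ coming from concentration at the origin against the singular term, and of $S_{\mathrm{rad}}$ (a soliton sliding off to $s=\pm\infty$). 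To obtain a minimizer below $S_{\mathrm{rad}}$ — hence a solution genuinely distinct from $u_{\mathrm{rad}}$ — I would exploit a linear instability of $u_{\mathrm{rad}}$ inside the $G$--invariant space: expanding the linearization of \eqref{eqintro} at $u_{\mathrm{rad}}$ in (magnetic) spherical harmonics, the $\ell$--th sector carries the one--dimensional Schr\"{o}dinger operator $-\partial_{ss}+(\mu_{\ell}+\Lambda_{1})-(2^{*}-1)v_{\Lambda_{1}}^{2^{*}-2}$, which acquires a negative eigenvalue as soon as $\mu_{\ell}<\abs{\lambda_{-}}\Lambda_{1}$, where $\mu_{\ell}$ is the $\ell$--th eigenvalue of the angular operator and $\lambda_{-}<0$ the scale--invariant negative eigenvalue of the soliton's linearization. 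A harmonic of admissible degree surviving the symmetry of $G$ then forces $S_{G}<S_{\mathrm{rad}}$ once $\abs{a}$ exceeds a threshold $\abs{a^{*}}$; enlarging $k$ if necessary, in the same range one also has $S_{\mathrm{rad}}<k^{2/N}S$ and $S_{\mathrm{rad}}<\mathcal S_{0}(a)$ (as $\mathcal S_{0}(a)$ grows like $\abs{a}$, faster than $S_{\mathrm{rad}}$), so the binding level is $S_{\mathrm{rad}}$, compactness holds, and symmetric criticality turns the minimizer into a second solution of \eqref{eqintro} with strictly smaller energy than $u_{\mathrm{rad}}$.

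\medskip
\noindent\emph{Main difficulty, and remaining points.} The hard part is exactly this last calibration: pushing $S_{G}$ strictly below \emph{all} the competing concentration levels while keeping it strictly below $S_{\mathrm{rad}}$, which couples the two parameters $a$ (large and negative) and $k$ and rests on sharp estimates of $\mathcal S_{0}(a)$ and of the multi-bubble level $k^{2/N}S$. A further point to settle is the exact invariance group of the second minimizer — in particular excluding the full $SO(2)$--symmetry in $(x_{1},x_{2})$, which amounts to iterating the instability step once more starting from the (a priori more symmetric) minimizer just found. Finally, for Aharonov--Bohm type potentials the construction carries over essentially verbatim: $\Lambda_{1}$ and the $\mu_{\ell}$ become the eigenvalues of the \emph{magnetic} angular operator on $\sphere{N-1}$ (positive because the circulation displaces the spectrum), the role of the large parameter is played by the circulation (or still by $a$), and since $G_{k}\times O(N-2)$ leaves such a potential invariant, both the compactness and the instability arguments go through unchanged.
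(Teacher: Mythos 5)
Your plan is a genuinely different route (Emden--Fowler reduction, spectral decomposition of the angular operator, linear instability of the radial profile), but as it stands it has two genuine gaps. The first concerns the radial solution. You produce it by minimizing ``with $A\equiv 0$'' over profiles $v=v(s)$, i.e.\ you construct the soliton of $-\Delta u-\frac{a}{\abs{x}^2}u=\abs{u}^{2^*-2}u$. But the theorem asks for a solution of the magnetic equation (\ref{eqintro}), and radial (even biradial) functions are not blind to the magnetic potential: the cross term need not vanish and, in any case, the contribution $\int_{\R^N}\frac{\abs{A}^2}{\abs{x}^2}\abs{u}^2$ survives, so your $u_{\mathrm{rad}}$ solves a different equation and its level is not the value of the magnetic quotient on any radial competitor. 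Consequently the key step ``linear instability of $u_{\mathrm{rad}}$ inside the $G$-invariant space forces $S_G<S_{\mathrm{rad}}$'' linearizes the wrong operator at a function which is not a critical point of the magnetic functional. In the paper the radially symmetric solution is obtained differently: the biradial magnetic level $S^{r_1,r_2}_{A,a}$ is shown to be always attained (Proposition (\ref{teoesistdrsol})), symmetric criticality makes the minimizer a solution of (\ref{eqintro}), and only then the Morse-index result of \cite{ATdr09} (Theorem (\ref{doublyradialth})) upgrades biradial to radial. Some ingredient of this kind is indispensable and is absent from your scheme.

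The second gap is that the two decisive quantitative comparisons are announced but not carried out, and one of them is part of the statement itself. First, you need the symmetric level strictly below the symmetric compactness threshold; note that by the exact homogeneity of the potentials the problem is dilation invariant, so ``concentration at the origin'' and ``the soliton sliding off in $s$'' are not separate energy thresholds (they are translations on the cylinder), and the only relevant level is $k^{2/N}S$, coming from $k$ bubbles on a $\Z_k$-orbit in the $(x_1,x_2)$-plane. Proving $S^k_{A,a}<k^{2/N}S$ is exactly the content of Proposition (\ref{valangadiconti}), i.e.\ the test functions $\sum_{j}w_j$ centered on a circle of radius $R$ and the delicate balance, in terms of $k$ and $R$, of the interaction term $\alpha$, the electric term $\beta$ and the magnetic cross term $\gamma$ (Lemmas (\ref{lemma_asintotici})--(\ref{scelta alfa,beta,gamma})); nothing in your sketch substitutes for these estimates, which together with Theorem (\ref{s^kachieved}) give existence of the second solution. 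Second, to conclude that this solution is \emph{only} invariant under the discrete group you must exclude that the minimizer of $S^k_{A,a}$ is biradial; you explicitly leave this open (``a further point to settle''), and the proposed iteration of the instability argument cannot be run at an unknown minimizer. The paper closes precisely this point, for $a<a^*$, through the chain $S^{r_1,r_2}_{A,a}\geq S^{r_1,r_2}_{0,a}=S^{rad}_{0,a}\geq k^{2/N}S>S^{k}_{A,a}$, which rests on the diamagnetic inequality, on \cite{ATdr09} again for the identity $S^{r_1,r_2}_{0,a}=S^{rad}_{0,a}$, and on the explicit evaluation of $S^{rad}_{0,a}$ in \cite{Ter96}. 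Your spectral-instability mechanism could conceivably replace the last ingredient, but as written it neither quantifies the threshold $a^*$ nor yields the comparison with the whole biradial class, so both the multiplicity and the symmetry assertions remain unproved.
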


We point out hypothesis on the dimension is purely technical here. By the way,
in dimension $N=3$ and in case of Aharonov-Bohm potentials, Clapp
and Szulkin proved in \cite{CS10} the existence of at least a solution
which enjoys the so-called \emph{biradial} symmetry. However, their
argument may be adapted even in further dimensions, provided a
cylindrical symmetry is asked to functions with respect to the
second set of variables in $\R^{N-2}$.

The proof of our main result is based on a comparison between the
different levels of the Rayleigh quotient's infima taken over
different spaces of functions which enjoy certain symmetry
properties. In particular, we will focus our attention on three
different kinds of symmetries:
\begin{enumerate}
 \item functions which are invariant under the $\Z_k\times SO(N-2)$ action for $k\in\N$, $m\in\Z$ defined as
$$u(z,y)\mapsto \e^{i\frac{2\pi}{k}m}u(\e^{i\frac{2\pi}{k}}z,Ry) \qquad \textrm{for $z\in\R^2$ and $y\in\R^{N-2}$, $R\in SO(N-2)$,}$$
$\domk$ will denote their vector space;
 \item functions which we will call "biradial", i.e.
$$u(z,y)= u(r_1,r_2)\qquad \textrm{where $r_1=\sqrt{x_1^2+x_2^2}$ and $r_2=\sqrt{x_3^2+\cdots+x_N^2}$,}$$
$D^{1,2}_{r_1,r_2}$ will denote their vector space; sometimes we shall consider the symmetric functions 
$u(z,y)=\e^{im\arg{z}} u(r_1,r_2)$.
 \item functions which are radial, $D^{1,2}_{rad}$ will be their
 vector space.
\end{enumerate}

We fix the notation we will use throughout the paper:
\begin{defn}
 $S_{A,a}^{r_1,r_2}$ is the minimum of the Rayleigh quotient related to the magnetic Laplacian over all the biradial functions in $\dom$;

$S_{0,a}^{r_1,r_2}$ is the minimum of the Rayleigh quotient related
to the usual Laplacian over all the biradial functions in $\dom$;

$S_{0,a}^{rad}$ is the minimum of the Rayleigh quotient related to
the usual Laplacian over all the radial functions in $\dom$;

$S_{0,a}^{k}$ is the minimum of the Rayleigh quotient related to the
usual Laplacian over all the functions in $\domk$;

$S_{A,a}^{k}$ is the minimum of the Rayleigh quotient related to the
magnetic Laplacian over all the functions in $\domk$;

$S$ is the usual Sobolev constant for the immersion $\dom\hookrightarrow L^{2^*}(\R^N)$.
\end{defn}

In order to prove these quantities are achieved, we use
concentration-compactness arguments, in a special form due to
Solimini in \cite{Sol95}. Unfortunately, we are not able to compute
the precise values of the abovementioned infima, but only to provide
estimates in terms of the Sobolev constant $S$; nevertheless this is
enough to our aims. By the way, it is worth being noticed in
\cite{Ter96} a characterization is given for the radial case
$S_{0,a}^{rad}$: it is proved $S_{0,a}^{rad}$ is achieved and the
author is able to compute its precise value. This will turn out
basic when we compare it with the other infimum values in order to
deduce some results about symmetry properties.

\smallskip

Both in case of $\frac{A(\theta)}{\abs{x}}$ type potentials and
Aharonov-Bohm type potentials, we follow the same outline. We
organize the paper as follows: first of all in Section 2 we state
the variational framework for our problem; secondly in Section 3 we
provide some sufficient conditions to have the infimum of the
Rayleigh quotients achieved, beginning from some simple particular
cases; in Section 4 we investigate the potential symmetry of
solutions; finally in Section 6 we deduce our main result. On the
other hand, Section 5 is devoted to the study of Aharonov-Bohm type
potentials.

\section{Variational setting}

As initial domain for the quadratic form (\ref{fq}) we take the
space of compactly supported functions in $\R^{N}\setminus\{0\}$ :
we denote it $\cinf{\R^{N}\setminus\{0\}}$. Actually, as a
consequence of the following lemmas, one can consider the space
$\dom$ as the maximal domain for the quadratic form. We recall that by definition
$\dom=\overline{\cinf{\R^N}}^{(\int_{\R^N}\abs{\nabla u}^2)^{1/2}}$,
i.e. the completion of the compact supported functions on $\R^N$
under the so-called Dirichlet norm.

The main tools for this are the following basic inequalities:
\begin{eqnarray*}
\int_{\R^N}\frac{u^2}{\abs{x}^2}\,dx&\leq&\frac{4}{(N-2)^2}\int_{\R^N}\abs{\nabla
u}^2\,dx \quad \textrm{(Hardy inequality)}\\
\int_{\R^N}\abs{\nabla\abs{u}}^2\,dx&\leq&\int_{\R^N}\abs{\left(i\nabla
- \frac{A}{\abs{x}}\right)u}^2\,dx \quad \textrm{(diamagnetic
inequality)}
\end{eqnarray*}
both with the following lemmas
\begin{lem}\label{firstlemma}
The completion of $\cinf{\R^{N}\setminus\{0\}}$ under the Dirichlet norm coincide with the space $\dom$.
\end{lem}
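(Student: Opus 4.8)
The plan is to show that $\cinf{\R^N\setminus\{0\}}$ is dense in $\dom$ under the Dirichlet norm; since $\cinf{\R^N}$ is by definition dense in $\dom$, it suffices to approximate every $u\in\cinf{\R^N}$ by functions in $\cinf{\R^N\setminus\{0\}}$. In other words, the single point $\{0\}$ must be shown to have zero $D^{1,2}$-capacity in dimension $N\geq 3$, which is the classical fact that removing a point does not change the Sobolev space. First I would fix $u\in\cinf{\R^N}$ and introduce a family of logarithmic-type cutoff functions $\eta_\eps$ supported away from the origin: for instance $\eta_\eps(x)=0$ for $\abs{x}\le\eps$, $\eta_\eps(x)=1$ for $\abs{x}\ge\sqrt{\eps}$, and interpolating by $\eta_\eps(x)=\frac{\log(\abs{x}/\eps)}{\log(1/\sqrt{\eps})}$ in between (or, more simply in dimension $N\ge 3$, the scaling cutoff $\eta_\eps(x)=\chi(x/\eps)$ with $\chi$ smooth, $\chi\equiv 0$ near $0$ and $\chi\equiv 1$ outside the unit ball). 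Then $u\eta_\eps\in\cinf{\R^N\setminus\{0\}}$.

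The core estimate is then $\nor{\nabla(u-u\eta_\eps)}_{L^2}\to 0$ as $\eps\to 0$. Expanding $\nabla(u(1-\eta_\eps))=(1-\eta_\eps)\nabla u-u\nabla\eta_\eps$, the first term goes to zero by dominated convergence since $1-\eta_\eps\to 0$ pointwise and is bounded. For the second term one estimates
\begin{equation*}
\int_{\R^N}u^2\abs{\nabla\eta_\eps}^2\,dx\leq \nor{u}_{L^\infty}^2\int_{\eps\le\abs{x}\le \eps^{1/2}}\abs{\nabla\eta_\eps}^2\,dx,
\end{equation*}
and a direct computation in polar coordinates shows this integral is $O\!\left(\frac{1}{\abs{\log\eps}}\right)$ with the logarithmic cutoff, or $O(\eps^{N-2})$ with the scaling cutoff; either way it vanishes as $\eps\to 0$. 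Hence $u\eta_\eps\to u$ in the Dirichlet norm, proving density and therefore the claimed equality of completions. I would also remark (or invoke the Hardy and diamagnetic inequalities quoted just above) that this Dirichlet-norm convergence is exactly what is needed to later identify the form domain, but for this lemma only the plain gradient norm is at issue.

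The only genuine subtlety — and the step I expect to require the most care in the writeup — is the direction $\cinf{\R^N\setminus\{0\}}\subseteq\overline{\cinf{\R^N}}$, i.e. that we have not \emph{enlarged} the space: this is immediate because $\cinf{\R^N\setminus\{0\}}\subset\cinf{\R^N}$, so the nontrivial content is entirely the reverse inclusion handled above. One should be slightly careful that the cutoff product $u\eta_\eps$ genuinely lies in $\cinf{\R^N\setminus\{0\}}$ (compact support is inherited from $u$, and the support is pushed off the origin by $\eta_\eps$), and that the interchange of limit and integral is justified by an $L^\infty$/compact-support domination — both routine. No obstacle of substance arises in dimension $N\ge 3$; the construction would genuinely fail only for $N\le 2$, which is consistent with the remark in the introduction that for $N=2$ one obtains a strictly smaller space.
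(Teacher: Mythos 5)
Your proof is correct and is essentially the standard argument this lemma rests on (the paper itself omits the details, deferring to \cite{FFT08}): approximate $u\in\cinf{\R^N}$ by $u\eta_\eps$ with cutoffs vanishing near the origin and check $\int_{\R^N}u^2\abs{\nabla\eta_\eps}^2\to0$, which is exactly the statement that a point has zero capacity for $N\geq3$. The only point needing care is to use the smooth rescaled cutoff $\chi(x/\eps)$ (which you offer) rather than the logarithmic one, since the latter is merely Lipschitz and would require an extra mollification for $u\eta_\eps$ to lie in $\cinf{\R^N\setminus\{0\}}$.
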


\begin{lem}
If $A\in L^{\infty}(\sphere{N-1})$ then the norm $\bigg(\displaystyle\int_{\R^N}\bigg\vert\bigg(i\nabla-\frac{A(\theta)}{\abs{x}}\bigg)u\bigg\vert^2\bigg)^{1/2}$ is equivalent to the Dirichlet norm on $\cinf{\R^N\setminus\{0\}}$.
\end{lem}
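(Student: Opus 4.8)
The plan is to show the equivalence of the magnetic norm $\bigl(\int_{\R^N}\bigl\vert(i\nabla - \tfrac{A(\theta)}{\abs{x}})u\bigr\vert^2\bigr)^{1/2}$ with the Dirichlet norm $\bigl(\int_{\R^N}\abs{\nabla u}^2\bigr)^{1/2}$ on $\cinf{\R^N\setminus\{0\}}$, that is, to produce two constants $0<c\le C$ with
\[
c\int_{\R^N}\abs{\nabla u}^2 \;\le\; \int_{\R^N}\bigg\vert\bigg(i\nabla-\frac{A(\theta)}{\abs{x}}\bigg)u\bigg\vert^2 \;\le\; C\int_{\R^N}\abs{\nabla u}^2
\]
for all $u\in\cinf{\R^N\setminus\{0\}}$. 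The upper bound is the easy direction: I would expand the square pointwise,
\[
\bigg\vert\bigg(i\nabla-\frac{A(\theta)}{\abs{x}}\bigg)u\bigg\vert^2 = \abs{\nabla u}^2 + \frac{\abs{A(\theta)}^2}{\abs{x}^2}\abs{u}^2 + 2\,\mathrm{Re}\,\bigg(\overline{i\nabla u}\cdot\frac{A(\theta)}{\abs{x}}u\bigg),
\]
bound $\abs{A(\theta)}\le\nor{A}_{L^\infty(\sphere{N-1})}=:M$, control the cross term by Cauchy–Schwarz as $2M\,\abs{\nabla u}\,\abs{u}/\abs{x}\le \abs{\nabla u}^2 + M^2\abs{u}^2/\abs{x}^2$, and then absorb every $\int \abs{u}^2/\abs{x}^2$ term using the Hardy inequality $\int_{\R^N}\abs{u}^2/\abs{x}^2 \le \tfrac{4}{(N-2)^2}\int_{\R^N}\abs{\nabla u}^2$. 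This yields $C = 2 + \tfrac{8M^2}{(N-2)^2}$, say, with no loss of generality on the precise value.

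The lower bound is the real point, and here I see two routes. The cheap route uses the diamagnetic inequality $\int_{\R^N}\abs{\nabla\abs{u}}^2 \le \int_{\R^N}\abs{(i\nabla - \tfrac{A}{\abs{x}})u}^2$ recalled just above the lemma, together with the hypothesis stated throughout the paper that the quadratic form \eqref{fq} is positive — but that only controls the magnetic form from below by the electric part, not directly by the full Dirichlet norm, so it is not quite enough by itself. The clean self-contained argument is instead to reverse the pointwise expansion: write $\abs{\nabla u}^2 = \bigl\vert(i\nabla - \tfrac{A}{\abs{x}})u + \tfrac{A}{\abs{x}}u\bigr\vert^2$ (interpreting $\nabla u = -i(i\nabla u)$ appropriately) and again apply Cauchy–Schwarz with a parameter $\eta>0$: $\abs{\nabla u}^2 \le (1+\eta)\bigl\vert(i\nabla-\tfrac{A}{\abs{x}})u\bigr\vert^2 + (1+\eta^{-1})\tfrac{M^2}{\abs{x}^2}\abs{u}^2$. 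Integrating and using Hardy once more gives $\int\abs{\nabla u}^2 \le (1+\eta)\int\bigl\vert(i\nabla-\tfrac{A}{\abs{x}})u\bigr\vert^2 + (1+\eta^{-1})\tfrac{4M^2}{(N-2)^2}\int\abs{\nabla u}^2$; choosing $\eta$ large enough that $(1+\eta^{-1})\tfrac{4M^2}{(N-2)^2}<1$ is \emph{not} automatic, so this naive splitting does not close.

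The main obstacle is therefore exactly this: the magnetic perturbation is scaling-critical (same homogeneity as $\nabla$), so it cannot be treated as a small perturbation and a crude Hardy absorption fails when $M$ is large. The fix I would use is the subtler coercivity estimate available for these Hardy–Leray–type magnetic forms: on $\cinf{\R^N\setminus\{0\}}$ one has, for a constant $\mu_1(A)>-\tfrac{(N-2)^2}{4}$ depending on the first eigenvalue on the sphere of the perturbed operator (this is precisely the content of the Hardy-type inequalities with magnetic potential in \cite{FFT08} cited in the excerpt),
\[
\int_{\R^N}\bigg\vert\bigg(i\nabla-\frac{A(\theta)}{\abs{x}}\bigg)u\bigg\vert^2 \;\ge\; \bigg(1 + \frac{\mu_1(A) + \tfrac{(N-2)^2}{4}}{\,\cdot\,}\bigg)^{-1}\!\!\int_{\R^N}\abs{\nabla u}^2,
\]
more cleanly: the magnetic Hardy inequality gives a positive lower constant $c=c(N,A)>0$ precisely because $A\in L^\infty(\sphere{N-1})$ forces the associated angular operator on $\sphere{N-1}$ to be bounded below, and then the Fourier/spherical-harmonics decomposition in the radial variable reduces the inequality to a one-dimensional Hardy inequality with a shifted constant that stays strictly positive. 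So the order of steps is: (i) record the $L^\infty$ bound $M$ on $A$; (ii) prove the upper bound by pointwise expansion $+$ Cauchy–Schwarz $+$ Hardy; (iii) for the lower bound invoke the magnetic Hardy inequality from \cite{FFT08} (whose positivity uses both $A\in L^\infty(\sphere{N-1})$ and the standing positivity assumption on \eqref{fq}) to get the coercivity constant $c>0$; (iv) combine (ii)–(iii) to conclude norm equivalence, which by density extends from $\cinf{\R^N\setminus\{0\}}$ to its completion — consistent with Lemma \ref{firstlemma} identifying that completion with $\dom$.
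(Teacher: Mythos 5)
Your upper bound is fine: expansion, Cauchy--Schwarz and the Hardy inequality give $\int_{\R^N}\abs{(i\nabla-\frac{A(\theta)}{\abs{x}})u}^2\le C(N,M)\int_{\R^N}\abs{\nabla u}^2$ with $M=\nor{A}_{L^\infty(\sphere{N-1})}$. The trouble is the lower bound. You correctly note that absorbing $(1+\eta^{-1})M^2\int_{\R^N}\abs{u}^2/\abs{x}^2$ into the Dirichlet integral of $u$ fails for large $M$, but you then discard the diamagnetic route for an incorrect reason and replace it by an appeal to the spectral analysis of \cite{FFT08}, stated only up to a placeholder and with the claim that the standing positivity of \eqref{fq} is needed. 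That claim is off target: \eqref{fq} contains the electric term $a(\theta)/\abs{x}^2$, which plays no role in this lemma, and no condition of the type $\mu_1(A)>-(N-2)^2/4$ is required when there is no electric potential. As written, your step (iii) invokes a coercivity statement you have not actually formulated, so the lower bound is not proved.

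The missing elementary idea --- and the one the paper's setup points to, since it lists Hardy and the diamagnetic inequality as the main tools for these lemmas --- is to bound the Hardy term by the \emph{magnetic} form rather than by the Dirichlet form: applying the scalar Hardy inequality to the real function $\abs{u}$ and then the diamagnetic inequality gives, for $u\in\cinf{\R^N\setminus\{0\}}$ and $Q_A(u)=\int_{\R^N}\abs{(i\nabla-\frac{A(\theta)}{\abs{x}})u}^2$,
\[
\int_{\R^N}\frac{\abs{u}^2}{\abs{x}^2}\ \le\ \frac{4}{(N-2)^2}\int_{\R^N}\abs{\nabla\abs{u}}^2\ \le\ \frac{4}{(N-2)^2}\,Q_A(u)\,.
\]
With this magnetic Hardy inequality your own splitting closes, because the error term is absorbed on the magnetic side: for every $\eta>0$,
\[
\int_{\R^N}\abs{\nabla u}^2\ \le\ (1+\eta)\,Q_A(u)+(1+\eta^{-1})\,\frac{4M^2}{(N-2)^2}\,Q_A(u)\,,
\]
with no smallness assumption on $M$. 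This yields the two-sided equivalence with constants depending only on $N$ and $M$, entirely on $\cinf{\R^N\setminus\{0\}}$, and makes the detour through the angular eigenvalue decomposition of \cite{FFT08} unnecessary (though consistent with it).
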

%
%

\begin{lem}\label{formaquadfq}
The quadratic form (\ref{fq}) is equivalent to
$Q_A(u)=\displaystyle\int_{\R^N}\bigg\vert\bigg(i\nabla-\frac{A(\theta)}{\abs{x}}\bigg)u\bigg\vert^2$
on its maximal domain $\dom$ provided $\nor{a}_{\infty}<(N-2)^2/4$.
Moreover, it is positive definite.
\end{lem}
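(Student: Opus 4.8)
The plan is to control the potential term $\int_{\R^N} \frac{a(\theta)}{\abs{x}^2}\,u^2$ by the magnetic kinetic term $Q_A(u)$ using the two basic inequalities already recorded, and then to read off both the norm equivalence and the positive definiteness. First I would estimate, for $u\in\cinf{\R^N\setminus\{0\}}$,
\begin{equation*}
\abs{\int_{\R^N}\frac{a(\theta)}{\abs{x}^2}\,u^2}\le \nor{a}_\infty\int_{\R^N}\frac{u^2}{\abs{x}^2}\le \nor{a}_\infty\cdot\frac{4}{(N-2)^2}\int_{\R^N}\abs{\nabla\abs u}^2,
\end{equation*}
where the first step is Hölder (or simply the pointwise bound $\abs{a(\theta)}\le\nor a_\infty$) and the second is Hardy's inequality applied to $\abs u\in\dom$. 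Then the diamagnetic inequality gives $\int_{\R^N}\abs{\nabla\abs u}^2\le Q_A(u)$, so altogether
\begin{equation*}
\abs{\int_{\R^N}\frac{a(\theta)}{\abs{x}^2}\,u^2}\le \frac{4\nor a_\infty}{(N-2)^2}\,Q_A(u)=:\lambda\, Q_A(u),\qquad \lambda:=\frac{4\nor a_\infty}{(N-2)^2}<1
\end{equation*}
by the standing hypothesis $\nor a_\infty<(N-2)^2/4$.

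From this one immediately gets the two-sided bound
\begin{equation*}
(1-\lambda)\,Q_A(u)\le Q_A(u)-\int_{\R^N}\frac{a(\theta)}{\abs{x}^2}\,u^2\le (1+\lambda)\,Q_A(u),
\end{equation*}
which shows the quadratic form \eqref{fq} is equivalent to $Q_A$ on $\cinf{\R^N\setminus\{0\}}$, hence — invoking the previous lemma, which identifies the completion of $\cinf{\R^N\setminus\{0\}}$ under $Q_A^{1/2}$ with $\dom$ and $Q_A^{1/2}$ with the Dirichlet norm — the form \eqref{fq} extends to an equivalent form on the maximal domain $\dom$. The lower bound $(1-\lambda)Q_A(u)>0$ for $u\neq 0$ (using that $Q_A(u)=0$ forces $\abs{\nabla\abs u}=0$, hence $u\equiv 0$ in $\dom$) gives positive definiteness. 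One should also note the potential term is a bounded bilinear form on $\dom$ by the same estimate, so the extension from the dense subspace $\cinf{\R^N\setminus\{0\}}$ to $\dom$ is legitimate; this is the only mildly delicate point, and it is routine given the density statement of Lemma \ref{firstlemma}.

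I do not expect a genuine obstacle here: the content is entirely the combination Hardy $+$ diamagnetic $+$ the smallness hypothesis on $\nor a_\infty$. The one thing to be careful about is that Hardy's inequality as stated is for real gradients, so it must be applied to $\abs u$ (not to $u$ directly), which is exactly why the diamagnetic inequality is needed as the bridge back to $Q_A$; writing the chain of inequalities in that order is the whole proof.
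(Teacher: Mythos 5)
Your argument is correct and is exactly the intended one: the paper omits a written proof (deferring to \cite{FFT08}) but introduces the Hardy and diamagnetic inequalities immediately beforehand precisely as ``the main tools'' for these lemmas, and your chain Hardy $+$ diamagnetic $+$ smallness of $\nor{a}_\infty$, followed by density via Lemma \ref{firstlemma}, is that argument. The two-sided bound with $\lambda=4\nor{a}_\infty/(N-2)^2<1$ and the resulting positive definiteness are exactly what is needed, so nothing is missing.
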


We refer to \cite{FFT08} for a deeper analysis of these questions.

\bigskip

\bigskip

We set the following variational problem
\begin{equation}\label{sa}
 S_{A,a}:=\inf_{u\in\dom\setminus\{0\}}\frac{\displaystyle\int_{\R^{N}}\bigg\vert\bigg(i\nabla-\frac{A(\theta)}{\abs{x}}\bigg)u\bigg\vert^2
 -\int_{\R^{N}}\frac{a(\theta)}{\abs{x}^2}\,u^2}{\bigg(\displaystyle\int_{\R^N}\abs{u}^{2^*}\bigg)^{2/2^*}}.
\end{equation}
Of course, $S_{A,a}$ is strictly positive since the quadratic form
(\ref{fq}) is positive definite.

We are now proposing a lemma which will be useful later.
\begin{lem}\label{lemmadivergenzatraslazione}
 Let $\{x_n\}$ be a sequence of points such that $\abs{x_n}\rightarrow\infty$ as $n\rightarrow\infty$.
 Then for any $u\in\dom$ as $n\rightarrow\infty$ we have
 $$
\frac{\displaystyle\int_{\R^{N}}\bigg\vert\bigg(i\nabla-\frac{A(\theta)}{\abs{x}}\bigg)u(\cdot+x_n)\bigg\vert^2
-\int_{\R^{N}}\frac{a(\theta)}{\abs{x}^2}\,\abs{u(\cdot+x_n)}^2}{\bigg(\displaystyle\int_{\R^N}\abs{u}^{2^*}\bigg)^{2/2^*}}
\rightarrow\frac{\displaystyle\int_{\R^N}\abs{\nabla
u}^2}{\left(\displaystyle\int_{\R^N}\abs{u}^{2^*}\right)^{2/{2^*}}}.
 $$
\end{lem}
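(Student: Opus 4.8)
The plan is to show that translating a fixed function $u\in\dom$ far from the origin effectively ``kills'' the singular terms, because both the magnetic potential $A(\theta)/\abs{x}$ and the Hardy-type potential $a(\theta)/\abs{x}^2$ decay as $\abs{x}\to\infty$; what survives in the limit is precisely the Dirichlet quadratic form, whose Rayleigh quotient is the right-hand side. Since the denominator $\big(\int_{\R^N}\abs{u}^{2^*}\big)^{2/2^*}$ is invariant under translation, it suffices to prove convergence of the numerator, i.e.
\begin{equation*}
\int_{\R^{N}}\bigg\vert\bigg(i\nabla-\frac{A(\theta)}{\abs{x}}\bigg)u(\cdot+x_n)\bigg\vert^2-\int_{\R^{N}}\frac{a(\theta)}{\abs{x}^2}\,\abs{u(\cdot+x_n)}^2\ \longrightarrow\ \int_{\R^N}\abs{\nabla u}^2.
\end{equation*}

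First I would change variables $y=x+x_n$, so that the numerator becomes $\int_{\R^N}\big\vert\big(i\nabla-\tfrac{A(\theta_{y-x_n})}{\abs{y-x_n}}\big)u(y)\big\vert^2\,dy-\int_{\R^N}\tfrac{a(\theta_{y-x_n})}{\abs{y-x_n}^2}\abs{u(y)}^2\,dy$, where $\theta_z=z/\abs{z}$. Expanding the magnetic square gives three pieces:
\begin{equation*}
\int_{\R^N}\abs{\nabla u}^2+2\,\mathrm{Re}\int_{\R^N}\frac{A(\theta_{y-x_n})}{\abs{y-x_n}}\cdot i\nabla u\,\bar u\,dy+\int_{\R^N}\frac{\abs{A(\theta_{y-x_n})}^2}{\abs{y-x_n}^2}\abs{u}^2\,dy.
\end{equation*}
The first term is already the target. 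For the remaining three error terms I would argue that, since $\abs{A}_\infty<\infty$ and $\nor{a}_\infty<\infty$, on any fixed ball $B_R(0)$ the weights $\abs{y-x_n}^{-1}$ and $\abs{y-x_n}^{-2}$ tend to $0$ uniformly (because $\abs{x_n}\to\infty$ forces $\abs{y-x_n}\ge\abs{x_n}-R\to\infty$ for $y\in B_R$), so the contribution of each error term over $B_R$ vanishes. The contribution over the complement $\R^N\setminus B_R$ is controlled, uniformly in $n$, by the tails $\int_{\R^N\setminus B_R}\abs{\nabla u}^2$, $\int_{\R^N\setminus B_R}\abs{u}^2/\abs{x}^2$, $\int_{\R^N\setminus B_R}\abs{u}^{2^*}$, etc., which are small for $R$ large since $u\in\dom$; here one uses the Hardy and diamagnetic inequalities recalled before Lemma \ref{firstlemma} to bound the singular-weight integrals by the Dirichlet norm of $u$, together with Cauchy--Schwarz to handle the cross term. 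Combining: given $\eps>0$, fix $R$ to make all tails $<\eps$, then let $n\to\infty$ to kill the ball parts; this yields the claimed convergence.

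The only genuinely delicate point is the cross term $2\,\mathrm{Re}\int \tfrac{A(\theta_{y-x_n})}{\abs{y-x_n}}\cdot i\nabla u\,\bar u$, since it is not sign-definite and involves $\nabla u$; I would bound it by $2\nor{A}_\infty\big(\int_{\R^N}\tfrac{\abs{u}^2}{\abs{y-x_n}^2}\big)^{1/2}\big(\int_{\R^N}\abs{\nabla u}^2\big)^{1/2}$ via Cauchy--Schwarz, and then split the first factor over $B_R$ and its complement exactly as above — on $B_R$ the weight is $O(\abs{x_n}^{-2})$, off $B_R$ one uses $\int_{\R^N\setminus B_R}\tfrac{\abs{u}^2}{\abs{y-x_n}^2}\le\int_{\R^N\setminus B_R}\tfrac{\abs{u}^2}{\abs{x}^2}$ once $R$ is large enough (so that $\abs{y-x_n}\ge\abs{y}$ fails in general, but a comparison up to a constant, or simply Hardy on the translated function, suffices). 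A small technical care is also needed because $u\in\dom$ need not be smooth or compactly supported: one either argues by density from $\cinf{\R^N}$ using continuity of all the functionals involved in the Dirichlet norm (Lemmas \ref{firstlemma} and \ref{formaquadfq}), or works directly with the Hardy inequality, which is available on all of $\dom$. Apart from that, the argument is a routine dominated-convergence/tail-splitting estimate.
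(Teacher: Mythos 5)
Your overall route is the same as the paper's: the denominator is translation invariant, so everything reduces to showing that the singular terms vanish, and after expanding the magnetic square and using Cauchy--Schwarz (with $\nor{A}_\infty<\infty$) this amounts to proving $\int_{\R^N}\abs{u(\cdot+x_n)}^2/\abs{x}^2\to 0$, which the paper also does by a bulk/tail splitting: on a fixed ball containing the bulk of the translated function the weight is $O((\abs{x_n}-R)^{-2})$, and the remaining contribution is treated as a tail. Your treatment of the ball part and of the cross term is fine and in fact more explicit than the paper's.

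The one step that does not work as you state it is the exterior estimate. In the variable $y=x+x_n$ the tail term is $\int_{\abs{y}\geq R}\abs{u(y)}^2/\abs{y-x_n}^2\,dy$, and the point $y=x_n$ itself lies in the region $\{\abs{y}\geq R\}$ for large $n$; there $\abs{y-x_n}^{-2}$ is not dominated by $C\abs{y}^{-2}$ for any constant uniform in $n$, so neither the comparison "up to a constant" nor the bound by $\int_{\abs{y}\geq R}\abs{u}^2/\abs{y}^2$ is available. Likewise, Hardy applied to the translated function only gives the global bound $\frac{4}{(N-2)^2}\int_{\R^N}\abs{\nabla u}^2$, which is not small. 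What does close the argument is precisely the density remark you make in passing: given $\eps>0$ choose $\phi\in\cinf{\R^N}$ with $\nor{\nabla(u-\phi)}_{L^2}<\eps$; then Hardy applied to the translated difference gives $\int_{\R^N}\abs{(u-\phi)(y)}^2/\abs{y-x_n}^2\,dy\leq C\eps^2$ uniformly in $n$, while the term with $\phi$ vanishes as $n\to\infty$ because on the fixed compact support of $\phi$ one has $\abs{y-x_n}\geq\abs{x_n}-C\to\infty$; this yields $\limsup_n\int_{\R^N}\abs{u(\cdot+x_n)}^2/\abs{x}^2\leq C\eps^2$, which is the claim. (Alternatively one can use the refined embedding $\dom\hookrightarrow L^{2^*,2}$ and Lorentz--H\"older against $\abs{y-x_n}^{-2}\in L^{N/2,\infty}$.) To be fair, the paper's own proof simply asserts the uniform smallness of the tail $\int_{\R^N\setminus B_R(x_n)}\abs{u(x+x_n)}^2/\abs{x}^2\,dx$ without justification, so it glosses over exactly the point where your first two suggestions fail; your density argument, made the main line rather than a parenthesis, is what a complete proof needs.
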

\begin{proof}
It is sufficient to prove for all $\eps>0$ there exists a
$\overline{n}$ such that
$\int_{\R^N}\frac{\abs{u(x+x_n)}^2}{\abs{x}^2}dx<2\eps$ for
$n\geq\overline{n}$. Let us consider $R>0$ big enough to have
$$\int_{\R^N\setminus
B_R(x_n)}\frac{\abs{u(x+x_n)}^2}{\abs{x}^2}dx<\eps$$ for every $n\in\N$. On the other
hand, when $x\in B_R(x_n)$ we have
$\abs{x}\geq\abs{x_n}-\abs{x-x_n}\geq\abs{x_n}-R$ which is a
positive quantity for $n$ big enough. In this way
$$\int_{B_R(x_n)}\frac{\abs{u(x+x_n)}^2}{\abs{x}^2}dx
\leq\frac{1}{(\abs{x_n}-R)^2}\int_{B_R(x_n)}\abs{u(x+x_n)}^2\,dx<\eps$$ for $n$ big enough. \qed
\end{proof}

\bigskip

Exploiting this lemma, we can state the following property holding
for $S_{A,a}$:
\begin{prop}\label{SA<S}
 If $S$ denotes the best Sobolev constant for the embedding of $\dom$ in $L^{2^*}(\R^N)$, i.e.
\begin{equation}\label{s}
 S:=\inf_{u\in\dom\setminus\{0\}}\frac{\displaystyle\int_{\R^N}\abs{\nabla u}^2}{\bigg(\displaystyle\int_{\R^N}\abs{u}^{2^*}\bigg)^{2/2^*}}\quad,
\end{equation}
it holds $S_{A,a}\leq S$.
\end{prop}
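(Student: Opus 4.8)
The plan is to use the translation-invariance encoded in Lemma~\ref{lemmadivergenzatraslazione} together with the definition of $S$ as the Sobolev quotient of the free Laplacian. Fix any $\eps>0$. By the definition~\eqref{s} of the Sobolev constant, choose $v\in\dom\setminus\{0\}$ with
\[
\frac{\displaystyle\int_{\R^N}\abs{\nabla v}^2}{\bigg(\displaystyle\int_{\R^N}\abs{v}^{2^*}\bigg)^{2/2^*}}<S+\eps .
\]
One may additionally assume $v\in\cinf{\R^N}$ by density, although this is not strictly needed since Lemma~\ref{lemmadivergenzatraslazione} is stated for all $u\in\dom$.

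Next I would take a sequence of points $\{x_n\}\subset\R^N$ with $\abs{x_n}\to\infty$ and consider the translated functions $v_n:=v(\cdot-x_n)$, which still belong to $\dom$. Since $\int_{\R^N}\abs{v_n}^{2^*}=\int_{\R^N}\abs{v}^{2^*}$, the Rayleigh quotient in the definition~\eqref{sa} of $S_{A,a}$ evaluated at $v_n$ is exactly the quantity appearing on the left-hand side of Lemma~\ref{lemmadivergenzatraslazione} (with $u$ replaced by $v$ and the translation by $+x_n$ replaced by $-x_n$, which is harmless since only $\abs{x_n}\to\infty$ is used). Hence by that lemma
\[
\frac{\displaystyle\int_{\R^{N}}\bigg\vert\bigg(i\nabla-\frac{A(\theta)}{\abs{x}}\bigg)v_n\bigg\vert^2-\int_{\R^{N}}\frac{a(\theta)}{\abs{x}^2}\,\abs{v_n}^2}{\bigg(\displaystyle\int_{\R^N}\abs{v_n}^{2^*}\bigg)^{2/2^*}}\;\longrightarrow\;\frac{\displaystyle\int_{\R^N}\abs{\nabla v}^2}{\bigg(\displaystyle\int_{\R^N}\abs{v}^{2^*}\bigg)^{2/2^*}}<S+\eps .
\]

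Finally, since $S_{A,a}$ is the infimum of this quotient over all nonzero elements of $\dom$, for every $n$ we have $S_{A,a}\leq$ (the quotient at $v_n$); letting $n\to\infty$ gives $S_{A,a}\leq S+\eps$. As $\eps>0$ was arbitrary, $S_{A,a}\leq S$, which is the claim. There is essentially no hard step here: the only point requiring care is checking that Lemma~\ref{lemmadivergenzatraslazione} applies verbatim to translations by $-x_n$ (it does, as the proof only uses $\abs{x_n}\to\infty$), and noting that the $L^{2^*}$ norm is translation invariant so the denominators match up. The heuristic content is simply that pushing mass off to infinity makes the singular potential $A(\theta)/\abs{x}$ and $a(\theta)/\abs{x}^2$ negligible, so the magnetic Rayleigh quotient degenerates to the plain Sobolev quotient, whose infimum is $S$.
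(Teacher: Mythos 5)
Your proof is correct and follows essentially the same route as the paper: both apply Lemma~\ref{lemmadivergenzatraslazione} to translates of (near-)minimizers of the Sobolev quotient and pass to the limit, the only cosmetic difference being that you fix an $\eps$-almost minimizer while the paper plugs in a minimizing sequence for \eqref{s}. No gaps.
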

\begin{proof}
Lemma (\ref{lemmadivergenzatraslazione}) shows immediately for all
$u\in\dom$
$$S_{A,a}\leq\frac{\displaystyle\int_{\R^N}\abs{\nabla u}^2}{\left(\displaystyle\int_{\R^N}\abs{u}^{2^*}\right)^{2/{2^*}}}+o(1)\ .$$
If we choose a minimizing sequence for (\ref{s}) in the line above,
we see immediately $S_{A,a}\leq S$. \qed
\end{proof}

\section{Attaining the infimum}

Given the results in \cite{BN83} due to Brezis and Nirenberg, one could expect 
that ,if $S_{A,a}$ is strictly less than $S$, then it is attained. Here we pursue 
this idea with concentration-compactness arguments, in the special version 
due to Solimini in \cite{Sol95}.
Before proceeding, we find useful to
recall some definitions about the so-called \emph{Lorentz spaces}.

\begin{defn}\cite{Sol95}
A \emph{Lorentz space} $L^{p,q}(\R^N)$ is a space of measurable
functions affected by two indexes $p$ and $q$ which are two positive
real numbers, $1\leq p\,,q\leq+\infty$, like the indexes which
determine the usual $L^p$ spaces. The index $p$ is called
\emph{principal index} and the index $q$ is called \emph{secundary
index}. A monotonicity property holds with respect to the secundary
index: if $q_1<q_2$ then $L^{p,q_1}\subset L^{p,q_2}$. So the
strongest case of a Lorentz space with principal index $p$ is
$L^{p,1}$; while the weakest case is $L^{p,\infty}$, which is
equivalent to the so-called \emph{weak $L^p$ space}, or
Marcinkiewicz space. Anyway, the most familiar case of Lorentz space
is the intermediate case given by $q=p$, since the space $L^{p,p}$
is equivalent to the classical $L^p$ space.
\end{defn}

\begin{properties}\cite{Sol95}
A basic property about the Lorentz spaces is an appropriate case of
the H\"{o}lder inequality, which states that the duality product of
two functions is bounded by a constant times the product of the
norms of the two functions in two respective conjugate Lorentz
spaces $L^{p_1,q_1}$ and $L^{p_2,q_2}$ where the two pairs of
indexes satisfy the relations
$\frac{1}{p_1}+\frac{1}{p_2}=\frac{1}{q_1}+\frac{1}{q_2}=1$.

Moreover, if we consider the Sobolev space $H^{1,p}(\R^N)$, it is
wellknown it is embedded in the Lebesgue space $L^{p^*}(\R^N)$. But
this embedding is not optimal: it holds that the space
$H^{1,p}(\R^N)$ is embedded in the Lorentz space $L^{p^*,p}$, which
is strictly stronger than $L^{p^*}=L^{p^*,p^*}$.
\end{properties}

\begin{thm}\textbf{(Solimini)}\label{solimini}(\cite{Sol95})
Let $(u_n)_{n\in\N}$ be a given bounded sequence of functions in $H^{1,p}(\R^N)$, with the index $p$ satisfing $1<p<N$. Then, replacing $(u_n)_{n\in\N}$ with a suitable subsequence, we can find a sequence of functions $(\phi_i)_{i\in\N}$ belonging to $H^{1,p}(\R^N)$ and, in correspondence of any index $n$, we can find a sequence of rescalings $(\rho_n^i)_{i\in\N}$ in such a way that the sequence $(\rho_n^i(\phi_i))_{i\in\N}$ is summable in $H^{1,p}(\R^N)$, uniformly with respect to $n$, and that the sequence $(u_n-\sum_{i\in\N}\rho_n^i(\phi_i))_{n\in\N}$ converges to zero in $L(p^*,q)$ for every index $q>p$.

Moreover we have that, for any pair of indexes $i$ and $j$, the two corresponding sequences of rescalings $(\rho_n^i)_{n\in\N}$ and $(\rho_n^j)_{n\in\N}$ are mutually diverging, that
\begin{equation}\label{solimini2}
\sum_{i=1}^{+\infty}\nor{\phi_i}_{1,p}^p\leq M\ ,
\end{equation}
where $M$ is the limit of $(\nor{u_n}_{1,p}^p)_{n\in\N}$, and that the sequence \\
$(u_n-\sum_{i\in\N}\rho_n^i(\phi_i))_{n\in\N}$ converges to zero in $H^{1,p}(\R^N)$ if and only if (\ref{solimini2}) is an equality.
\end{thm}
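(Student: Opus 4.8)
The final displayed statement is Solimini's profile decomposition theorem; it is proved in \cite{Sol95}, so I only describe the strategy I would follow. The plan is to produce the profiles by an iterated extraction whose engine is a \emph{refined Sobolev inequality} on the Lorentz scale: for each $q>p$ one seeks a constant $C_q$ with
$$\nor{u}_{L^{p^*,q}}\ \le\ C_q\,\nor{u}_{1,p}^{\,p/p^*}\,\big(N(u)\big)^{1-p/p^*},\qquad u\in H^{1,p}(\R^N),$$
where $N(u)$ is a functional invariant under the rescalings $\rho(u)(x)=\lambda^{(N-p)/p}u(\lambda x+x_0)$ of the statement --- a negative-smoothness Besov seminorm when $p=2$, or, as in \cite{Sol95} for general $p$, a quantity built from the superlevel sets $\{\abs{u}>t\}$. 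For $q=p$ only the sharp embedding $H^{1,p}\hookrightarrow L^{p^*,p}$ is available, with no invariant gain, which is exactly why that endpoint is excluded from the conclusion.

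On this I would base the core sublemma: \emph{if $(v_n)$ is bounded in $H^{1,p}$ and $\limsup_n N(v_n)=\delta>0$, then, along a subsequence, there are rescalings $\rho_n$ with $(\rho_n)^{-1}(v_n)\rightharpoonup\phi$ in $H^{1,p}$ and $\nor{\phi}_{1,p}\ge c(\delta)>0$.} I would prove it by a concentration function argument: for each $n$ pick a centre $x_n$ and a scale $\lambda_n$ nearly realizing $N(v_n)$, rescale there, pass to a weak limit by reflexivity of $H^{1,p}$, and bound $\nor{\phi}_{1,p}$ from below by testing against the bump that $N$ detects.

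Then I would iterate: set $v_n^0=u_n$; if $\limsup_n N(v_n^k)=0$ stop, otherwise extract $\phi_{k+1},\rho_n^{k+1}$ and put $v_n^{k+1}=v_n^k-\rho_n^{k+1}(\phi_{k+1})$. Two facts control the process. First, an asymptotic Pythagorean relation, obtained from $(\rho_n^i)^{-1}(v_n^{i-1})\rightharpoonup\phi_i$ via Rellich's theorem and the Brezis--Lieb lemma applied to $\abs{\nabla\,\cdot\,}^p$ (the one place where $p\neq2$ needs a little extra care):
$$\nor{u_n}_{1,p}^p\ =\ \sum_{i=1}^{k}\nor{\phi_i}_{1,p}^p+\nor{v_n^k}_{1,p}^p+o(1)\qquad(n\to\infty),$$
which gives \eqref{solimini2} in the limit and forces $\nor{\phi_i}_{1,p}\to0$. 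Second, mutual divergence of the scalings, proved by induction on the index: if $\rho_n^j$ failed to diverge from some earlier $\rho_n^i$, then $(\rho_n^i)^{-1}\rho_n^j$ would subconverge to a fixed rescaling $\sigma$, while --- $\phi_1,\dots,\phi_{j-1}$ all having been subtracted and the earlier scales diverging from $\rho_n^i$ by the inductive hypothesis --- $(\rho_n^i)^{-1}(v_n^{j-1})\rightharpoonup0$; hence $\phi_j=\sigma^{-1}(0)=0$, absurd. Finiteness of $\sum_i\nor{\phi_i}_{1,p}^p$ together with mutual divergence then yields, by a truncation argument, summability of $\sum_i\rho_n^i(\phi_i)$ in $H^{1,p}$ uniformly in $n$.

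Finally I would check that $r_n:=u_n-\sum_i\rho_n^i(\phi_i)\to0$ in every $L^{p^*,q}$ with $q>p$: given $\eps>0$, choose $k$ with $\sum_{i>k}\nor{\phi_i}_{1,p}^p<\eps$; by $H^{1,p}\hookrightarrow L^{p^*,p}\hookrightarrow L^{p^*,q}$ the tail $\sum_{i>k}\rho_n^i(\phi_i)$ is $\eps$-small in $L^{p^*,q}$ uniformly in $n$, while $u_n-\sum_{i\le k}\rho_n^i(\phi_i)=v_n^k$ satisfies $N(v_n^k)\to0$ (either the iteration stopped there, or it may be pushed one harmless further step), so $\nor{v_n^k}_{L^{p^*,q}}\to0$ by the refined inequality; let $n\to\infty$ then $\eps\to0$. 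The equivalence with convergence of $(r_n)$ in $H^{1,p}$ is read off the Pythagorean relation: $r_n\to0$ in $H^{1,p}$ iff $\nor{v_n^k}_{1,p}\to0$ for large $k$ iff $\sum_i\nor{\phi_i}_{1,p}^p=M:=\lim_n\nor{u_n}_{1,p}^p$, i.e. iff \eqref{solimini2} is an equality. The main obstacle is the refined Lorentz--Sobolev inequality and the single-profile extraction resting on it: one must exhibit a scaling invariant functional $N$ that simultaneously dominates and is dominated --- through the interpolation above --- by an $L^{p^*,q}$-type quantity and that feeds a concentration function argument producing a genuine weak limit after rescaling. For $p=2$ this is classical (Fourier/Littlewood--Paley), but for general $1<p<N$ it forces one to work directly with the measure-theoretic geometry of the superlevel sets of $u$, and this is the technical heart of \cite{Sol95}; the remaining ingredients --- the Brezis--Lieb accounting, the orthogonality of scalings, the tail estimates --- are routine.
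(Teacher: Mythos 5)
This statement is quoted verbatim from Solimini \cite{Sol95}; the paper itself gives no proof of it, so there is no internal argument to compare yours against. Your outline --- a rescaling-invariant refined Lorentz--Sobolev bound driving a single-profile extraction, the iterated subtraction with the Brezis--Lieb/Pythagorean expansion yielding \eqref{solimini2}, induction for the mutual divergence of the rescalings, and the tail estimate for convergence in $L^{p^*,q}$, $q>p$ --- is a faithful sketch of the standard profile-decomposition proof in the spirit of Solimini's original argument, with the genuinely technical ingredient (the scale-invariant functional for general $1<p<N$) correctly attributed to \cite{Sol95} rather than claimed.
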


Now we can state the result

\begin{thm}\label{sa<s}
 If $S_{A,a}<S$ then $S_{A,a}$ is attained.
\end{thm}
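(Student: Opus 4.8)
The plan is to apply Solimini's profile decomposition (Theorem~\ref{solimini}) with $p=2$ to a minimizing sequence $\{u_n\}$ for $S_{A,a}$, normalized so that $\int_{\R^N}\abs{u_n}^{2^*}=1$ and hence $Q_A(u_n)-\int a\abs{x}^{-2}u_n^2\to S_{A,a}$. By Lemma~\ref{formaquadfq} this quadratic form is equivalent to the Dirichlet norm, so $\{u_n\}$ is bounded in $\dom=H^{1,2}(\R^N)$ and the theorem produces profiles $\phi_i\in\dom$ and mutually diverging rescalings $\rho_n^i$ (each $\rho_n^i$ acting by $u\mapsto \lambda_{n,i}^{(N-2)/2}u(\lambda_{n,i}(\cdot-y_{n,i}))$, the scaling that preserves the Dirichlet norm) with $u_n-\sum_i\rho_n^i(\phi_i)\to0$ in $L^{2^*,q}$ for every $q>2$, in particular in $L^{2^*}$, and with $\sum_i\nor{\phi_i}_{1,2}^2\le M:=\lim\nor{u_n}_{1,2}^2$.

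First I would show that at most one profile can have a rescaling $\rho_n^i$ that stays "at the origin at unit scale"; more precisely, I would split the profiles according to the behaviour of $\rho_n^i$. Using Lemma~\ref{lemmadivergenzatraslazione} (and its rescaled version, obtained by the Dirichlet-norm invariance of the dilations) one sees that whenever the centers $y_{n,i}$ diverge or the concentration scale runs away from the origin, the corresponding profile "feels" only the pure Laplacian, so its contribution to the energy is at least $S\big(\int\abs{\phi_i}^{2^*}\big)^{2/2^*}$; and when $\rho_n^i$ is (up to a bounded rescaling) the identity, the contribution is at least $S_{A,a}\big(\int\abs{\phi_i}^{2^*}\big)^{2/2^*}$. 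Combining the almost-orthogonality of the $2^*$-norms coming from the mutual divergence of the rescalings, namely $1=\int\abs{u_n}^{2^*}\to\sum_i\int\abs{\phi_i}^{2^*}$, with the almost-orthogonality of the quadratic forms (the cross terms vanish because the supports/scales separate, again via Lemma~\ref{lemmadivergenzatraslazione}), I would obtain
$$
S_{A,a}=\lim_n\Big(Q_A(u_n)-\int_{\R^N}\frac{a}{\abs{x}^2}u_n^2\Big)\ \ge\ S_{A,a}\,\alpha_0^{2/2^*}+S\sum_{i\ge1}\alpha_i^{2/2^*},
$$
where $\alpha_i=\int\abs{\phi_i}^{2^*}\ge0$, $\sum_i\alpha_i=1$, and $\alpha_0$ denotes the mass of the (at most one) "non-divergent" profile.

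Then I would run the elementary convexity/superadditivity argument: since $2/2^*<1$ we have $\sum_i\alpha_i^{2/2^*}\ge\big(\sum_i\alpha_i\big)^{2/2^*}=1$ with equality iff exactly one $\alpha_i$ equals $1$ and the rest vanish. Feeding this into the displayed inequality and using the strict hypothesis $S_{A,a}<S$, the only way the inequality $S_{A,a}\ge S_{A,a}\alpha_0^{2/2^*}+S\sum_{i\ge1}\alpha_i^{2/2^*}$ can hold is $\alpha_0=1$ and $\alpha_i=0$ for all $i\ge1$: a single profile, of the non-divergent type, carrying all the mass. That forces $u_n-\rho_n(\phi_0)\to0$ in $L^{2^*}$, and since by the rescaling invariance we may assume $\rho_n=\mathrm{id}$, $\phi_0$ is a nonzero function with $\int\abs{\phi_0}^{2^*}=1$; lower semicontinuity of the quadratic form (weak convergence $u_n\rightharpoonup\phi_0$ in $\dom$ together with the compactness of the singular term, as in Lemma~\ref{lemmadivergenzatraslazione}) gives $Q_A(\phi_0)-\int a\abs{x}^{-2}\phi_0^2\le S_{A,a}$, so $\phi_0$ is a minimizer.

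The main obstacle is the bookkeeping needed to make the two "almost-orthogonality" statements rigorous: that the $2^*$-norm and the electromagnetic quadratic form both decouple along the Solimini decomposition, and in particular controlling the singular Hardy-type term $\int a\abs{x}^{-2}u_n^2$ under the decomposition — profiles whose scales or centers diverge from the origin contribute nothing to this term (this is exactly the mechanism of Lemma~\ref{lemmadivergenzatraslazione}), while a profile concentrating \emph{at} the origin does retain it, which is why such a profile is compared against $S_{A,a}$ rather than $S$. Handling the Aharonov–Bohm magnetic term under rescalings centered away from the origin requires the same care, since $A(\theta)/\abs{x}$ is $(-1)$-homogeneous and thus behaves well under dilations but must be shown to disappear in the limit under diverging translations. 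Once these decoupling facts are in place, the convexity step and the lower-semicontinuity conclusion are routine.
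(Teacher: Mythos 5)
This is essentially the paper's own proof: a Solimini decomposition of a minimizing sequence, the dilation invariance of the quotient, strict superadditivity of $t\mapsto t^{2/2^*}$ to reduce to a single nontrivial profile, and Lemma \ref{lemmadivergenzatraslazione} combined with the hypothesis $S_{A,a}<S$ to exclude that this profile escapes by translation. One small caution: a profile concentrating at the origin by pure dilation still feels the full potential (the quadratic form is exactly invariant under dilations centered at $0$), so such a profile must be compared with $S_{A,a}$ rather than $S$ --- this is harmless, since the dilation can be normalized away, exactly as the paper does by fixing $\lambda_n^1=1$, and your convexity step then goes through unchanged.
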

\begin{proof}
Let us consider a minimizing sequence $u_n\in\dom$ to $S_{A,a}$. In
particular, it is bounded in $\dom$. By Solimini's theorem
(\ref{solimini}), up to subsequences, there will exist a sequence
$\phi_i\in\dom$ and a sequence of mutually divergent rescalings
$\rho_n^i$ defined as
$\rho_n^i(u)=(\lambda_n^i)^{\frac{N-2}{2}}u(x_n+\lambda_n^i(x-x_n))$,
such that $\sum_{i}\rho_n^i\phi_i\in\dom$ and $u_n -
\sum_{i}\rho_n^i\phi_i\rightarrow0$ in $L^{2^*}$. In general the
rescalings may be mutually divergent by dilation (concentration or
vanishing) or by translation. In our case the Rayleigh quotient is
invariant under dilations, so the rescalings' divergence by dilation
cannot occur. By that, we mean the possibility to normalize the
modula $\lambda_n^1=1$ for each $n$.

Moreover, there exists at least a nontrivial function $\phi_i$,
namely $\phi_1$, which we choose to denote just $\phi$, in such a
way that we can write $u_n(\cdot) - \phi(\cdot+x_n) -
\sum_{i\geq2}\rho_n^i\phi_i\rightarrow0$ in $L^{2^*}$ with a little
abuse of notation, meaning that
$(\rho_n^1)^{-1}u_n-\phi(\cdot+x_n)-\sum_{i\geq2}(\rho_n^1)^{-1}\rho_n^i\phi_i\rightarrow0$
in $L^{2^*}$, where $(\rho_n^1)^{-1}u_n$ is again a minimizing
sequence and
$\sum_{i\geq2}(\rho_n^1)^{-1}\rho_n^i\phi_i\rightarrow0$ a.e. in
$\R^N$, because of the rescalings' mutual divergence. We can think
the sequence $u_n$ is normalized in $L^{2^*}$-norm. In this way the
sequence $\sum_{i\geq2}\rho_n^i\phi_i$ is also equibounded in
$L^{2^*}$; so that $\sum_{i\geq2}\rho_n^i\phi_i\rightharpoonup0$ in
$L^{2^*}$. At the same time even
$\sum_{i\geq2}\rho_n^i\phi_i\rightharpoonup0$ in $\dom$.

If we call for a moment $v_n=\sum_{i\geq2}\rho_n^i\phi_i$, we have
$$\abs{\abs{v_n+\phi}^{2^*}-\abs{\phi}^{2^*}-\abs{v_n}^{2^*}}\leq C\left(\abs{v_n}^{2^*-1}\abs{\phi}+\abs{v_n}\abs{\phi}^{2^*-1}\right)$$
from which
$$\int_{\R^N}\abs{u_n}^{2^*}=\int_{\R^N}\abs{\phi}^{2^*}+\int_{\R^N}\abs{v_n}^{2^*}+o(1)$$
thanks to the weak convergence $v_n\rightharpoonup0$ in $L^{2^*}$.
At the same time
\begin{eqnarray*}
\int_{\R^N}\abs{\nabla_A\left(\phi(\cdot+x_n)+v_n\right)}^2&=&\int_{\R^N}\abs{\nabla_A\phi(\cdot+x_n)}^2+\int_{\R^N}\abs{\nabla_A v_n}^2 \\
&&+2\int_{\R^N}\nabla_A\phi(\cdot+x_n)\cdot\nabla_A v_n\\
&=& \int_{\R^N}\abs{\nabla_A\phi(\cdot+x_n)}^2+\int_{\R^N}\abs{\nabla_A v_n}^2 +o(1)
\end{eqnarray*}
thanks to the weak convergence $v_n\rightharpoonup0$ in $\dom$.
So that
\begin{eqnarray*}
 S_{A,a}&\leftarrow&\frac{\displaystyle\int_{\R^N}\abs{\nabla_A\phi(\cdot+x_n)}^2+\int_{\R^N}\abs{\nabla_Av_n}^2+o(1)}{\displaystyle\left(\int_{\R^N}\abs{\phi}^{2^*}+\int_{\R^N}\abs{v_n}^{2^*}+o(1)\right)^{2/2^*}}\\
&\geq&S_{A,a}\frac{\displaystyle\left(\int_{\R^N}\abs{\phi}^{2^*}\right)^{2/2^*}+\left(\int_{\R^N}\abs{v_n}^{2^*}\right)^{2/2^*}+o(1)}{\displaystyle\left(\int_{\R^N}\abs{\phi}^{2^*}+\int_{\R^N}\abs{v_n}^{2^*}+o(1)\right)^{2/2^*}}\ ,
\end{eqnarray*}
and in order not to fall in contradiction 
the previous coefficient must tend to zero, and then $\int_{\R^N}\abs{v_n}^{2^*}\rightarrow0$.

In conclusion, we have $\nor{\sum_{i\geq2}\rho_n^i\phi_i}_{2^*}\rightarrow0$ and therefore the strong $\dom$ convergence $u_n(\cdot) -
\phi(\cdot+x_n)\rightarrow0$, since we have an equality in (\ref{solimini2}) in Theorem (\ref{solimini}).

At this point we aim to exclude the rescalings' translation
divergence. Let us suppose by contradiction this occurs: Lemma (\ref{lemmadivergenzatraslazione}) proves that if
$\abs{x_n}\rightarrow+\infty$, then the quotient evalueted on the
minimizing sequence $\Phi(\cdot+x_n)$ tends to
$\dfrac{\int_{\R^N}\abs{\nabla\phi}^2}{(\int_{\R^N}\abs{\phi}^{2^*})^{2/2^*}}$
which is greater equivalent than $S$, so we have a contradiction. \qed
\end{proof}

\subsection{The case $a\leq0$}

In order to investigate when the infimum is attained depending on the magnetic vector potential $A$ and the electric potential $a$, we start from the simplest cases. The first of them is the case $a\leq0$.

\begin{prop}
If $a\leq0$, $S_{A,a}$ is not achieved.
\end{prop}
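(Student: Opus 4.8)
The plan is to show that for $a\le 0$ the infimum $S_{A,a}$ equals the free Sobolev constant $S$ and that the latter is never attained on $\dom$ in the presence of the singular magnetic term, so that $S_{A,a}$ cannot be achieved either. The first half of this is already almost done: Proposition \ref{SA<S} gives $S_{A,a}\le S$, so it suffices to prove the reverse inequality $S_{A,a}\ge S$ when $a\le 0$. This is immediate from the diamagnetic inequality together with $a\le 0$: for every $u\in\dom\setminus\{0\}$,
\begin{equation*}
\qa-\int_{\R^{N}}\frac{a(\theta)}{\abs{x}^2}u^2 \;\ge\; \int_{\R^N}\abs{\nabla\abs{u}}^2 \;\ge\; S\,\bigg(\int_{\R^N}\abs{u}^{2^*}\bigg)^{2/2^*},
\end{equation*}
where the first step drops the nonnegative term $-\int a(\theta)\abs{x}^{-2}u^2$ and uses the diamagnetic inequality from Section 2, and the second step is the Sobolev inequality applied to $\abs{u}\in\dom$, recalling $\int_{\R^N}\abs{\abs{u}}^{2^*}=\int_{\R^N}\abs{u}^{2^*}$. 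Dividing and taking the infimum gives $S_{A,a}\ge S$, hence $S_{A,a}=S$.

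Next I would argue by contradiction that this common value is not attained. Suppose $u\in\dom\setminus\{0\}$ realizes $S_{A,a}=S$. Then equality must hold throughout the chain above. In particular $\int_{\R^N}\abs{\nabla\abs{u}}^2=S\big(\int_{\R^N}\abs{u}^{2^*}\big)^{2/2^*}$, so $\abs{u}$ is a minimizer for the free Sobolev quotient; by the classification of Sobolev extremals, $\abs{u}$ coincides (up to dilation and translation) with an Aubin–Talenti bubble $U(x)=c\,(1+\abs{x-x_0}^2)^{-(N-2)/2}$, which is strictly positive everywhere and in particular does not vanish at the origin. On the other hand, equality in the diamagnetic inequality forces $u$ to be, locally, of the form $u=\abs{u}\,e^{i\psi}$ with $\nabla\psi = A(\theta)/\abs{x}$ in the region $\{\abs{u}>0\}=\R^N$; but this is impossible because the $1$-form $A(\theta)\,\abs{x}^{-1}\,dx$ is not exact on $\R^N\setminus\{0\}$ in general (this is precisely the Aharonov–Bohm obstruction), and even when it is locally exact the resulting phase cannot be globally single-valued once the magnetic flux is nontrivial. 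A cleaner way to close this, avoiding a case distinction on $A$, is to use the term we threw away: if moreover $a\equiv 0$ is not assumed, equality in the first step forces $\int_{\R^N} a(\theta)\abs{x}^{-2}u^2=0$, which combined with $a\le 0$ and $\abs{u}>0$ everywhere gives $a\equiv 0$ a.e.\ on $\sphere{N-1}$; and for $a\equiv 0$ one still has the diamagnetic obstruction just described, while a genuinely nonzero $A$ makes equality in the diamagnetic inequality fail. The upshot is that no minimizer exists.

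The main obstacle is making the diamagnetic rigidity step precise: one must invoke the characterization of the equality case in the diamagnetic inequality, namely that $\abs{(i\nabla - A\abs{x}^{-1})u} = \abs{\nabla\abs{u}}$ a.e.\ implies $u = e^{i\psi}\abs{u}$ with $(i\nabla\psi)\abs{u} = (A\abs{x}^{-1})\abs{u}$, and then derive a contradiction from the non-triviality of the magnetic potential (or, in the degenerate case $A\equiv 0$ with $a\le 0$, $a\not\equiv 0$, from the forced vanishing of $\int a(\theta)\abs{x}^{-2}u^2$ against $\abs{u}>0$). I would cite \cite{FFT08} or the standard references on the diamagnetic inequality for the equality case, and \cite{BN83} for the classification of Sobolev extremals. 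If instead the paper's convention is that $A\not\equiv 0$ is always assumed, the argument simplifies to: equality in diamagnetic $\Rightarrow$ $u$ reduces to a zero-magnetic-field configuration $\Rightarrow$ contradiction with $A(\theta)\abs{x}^{-1}$ not being a gradient. Either way, the conclusion $S_{A,a}=S$ not attained follows.
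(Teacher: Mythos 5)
Your first step ($S_{A,a}=S$ when $a\le 0$, via the diamagnetic and Sobolev inequalities) is exactly the paper's, and your ``cleaner way'' for non-attainment is essentially the paper's mechanism as well: a minimizer $u$ would force equality throughout the chain, hence $\int_{\R^N}a(\theta)\abs{x}^{-2}\abs{u}^2=0$, while $\abs{u}$ would be a Sobolev extremal, strictly positive everywhere; this is the same strictly positive electric contribution that the paper uses to get $S_{A,a}\ge S+c>S$. So for $a\le 0$ with $a\not\equiv 0$ your argument is correct, and arguably tidier than the paper's rather sketchy contradiction.

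The gap is in the residual case $a\equiv 0$. There you rely on the claim that equality in the diamagnetic inequality is impossible for a ``genuinely nonzero $A$'' because of an Aharonov--Bohm type obstruction. This is false: equality holds precisely when $A(\theta)/\abs{x}=\nabla\Theta$ with $\Theta$ the phase of $u$, and there are plenty of nonzero $A\in L^{\infty}(\sphere{N-1},\R^N)$ of gradient type --- take any $\Theta=\Theta(\theta)$ homogeneous of degree $0$, so that $\nabla\Theta=\abs{x}^{-1}\nabla_{\sphere{N-1}}\Theta(\theta)$ --- and the proposition contains no hypothesis of non-exactness or nontrivial flux. Indeed the paper itself, in the subsection immediately following, proves that for $a=0$ the infimum \emph{is} achieved exactly when $A/\abs{x}$ is a gradient, by applying the gauge change $u\mapsto \e^{i\Theta}u$ to a Talenti bubble. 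Hence the case $a\equiv 0$ cannot be disposed of as you propose: the paper's proof tacitly uses strict negativity of $a$ on a set of positive measure (that is where its positive constant $c$ comes from), and the borderline case $a\equiv 0$ is handled separately with the opposite conclusion when $A/\abs{x}$ is a gradient. In short, your proof is sound and parallel to the paper's on the intended range $a\le 0$, $a\not\equiv 0$, but the branch meant to cover $a\equiv 0$ is incorrect and cannot be repaired without an additional hypothesis on $A$.
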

\begin{proof}
First of all, in this case we have $S_{A,a}=S$. Indeed, by
diamagnetic inequality, we have
$$\qa -\int_{\R^{N}}\frac{a(\theta)}{\abs{x}^2}\,u^2 \geq \int_{\R^N}\abs{\nabla \abs{u}}^2 - \int_{\R^{N}}\frac{a(\theta)}{\abs{x}^2}\,u^2$$
from which we have $S_{A,a}\geq S$.

Suppose by contradiction $S_A$ is achieved on a function $\phi$.
Following the previous argument by Solimini's theorem, according to
the negativity of the electric potential, we get $S_{A,a}\geq S+c$,
where $c$ is a positive constant due to the convergence of the term
$\dfrac{\int_{\R^{N}}\frac{a(\theta)}{\abs{x}^2}\,\phi(\cdot+x_n)^2}{(\int_{\R^N}\abs{\phi(\cdot+x_n)}^{2^*})^{2/2^*}}$.
So we get $S_{A,a}>S$, a contradiction.

Note here we used the considerable fact that
$$\inf_{u\in\dom\setminus\{0\}}\frac{\displaystyle\int_{\R^N}\abs{\nabla \abs{u}}^2}{\displaystyle\left(\int_{\R^N}\abs{u}^{2^*}\right)^{2/2^*}} = S\ .$$
Its proof is based on the idea that $S$ is achieved over a radial function. \qed
\end{proof}

\subsection{The case $a=0$ and $A\neq0$}

In this case we expect in general the infimum is not achieved.
Indeed, first of all we have $S_{A,a}=S$, because we have already
seen in general $S_{A,a}\leq S$, and in this case the diamagnetic
inequality gives the reverse inequality. There is a simple case in
which we can immediately deduce a result.
\begin{rem}
 If the vector potential $\dfrac{A}{\abs{x}}$ is a gradient of a function $\Theta\in L^1_{loc}(\R^N)$ such that $\nabla\Theta\in L^{N,\,\infty}(\R^N)$, then $S_{A,a}$ is achieved. \\
Indeed, suppose $\dfrac{A}{\abs{x}}=\nabla\Theta$ for a function $\Theta\in L^1_{loc}(\R^N)$ such that its gradient has the regularity mentioned above. The change of gauge $u\mapsto \e^{+i\Theta}u$ makes the problem (\ref{sa}) equivalent to (\ref{s}), so that the infimum is necessarly achieved.\\
Just a few words about the regularity asked to $\nabla\Theta$. In
order to have the minimum problem wellposed, it would be sufficient
$\nabla\Theta\in L^2$. But if we require the function
$\e^{-i\Theta}u\in\dom$ for any $u\in\dom$, this regularity is not
sufficient any more. Rather, everything works if $\nabla\Theta\in
L^{N,\,\infty}(\R^N)$.
\end{rem}

Now, suppose the infimum $S_{A,a}=S$ is achieved on a function $u\in\dom$. Then we have
$$S=\frac{\displaystyle\qa}{\bigg(\displaystyle\int_{\R^N}\abs{u}^{2^*}\bigg)^{2/2^*}}
\geq
\frac{\displaystyle\int_{\R^N}\abs{\nabla\abs{u}}^2}{\displaystyle\bigg(\int_{\R^N}\abs{u}^{2^*}\bigg)^{2/2^*}}\geq
S \ .$$ So it is clear the equality must hold in the diamagnetic
inequality in order not to fall into a contradiction. We have the
following chain of relations:
\begin{eqnarray*}
 \abs{\nabla\abs{u}}=\abs{Re\Big(\frac{\overline{u}}{\abs{u}}\nabla u\Big)}=\abs{Im\Big(i\frac{\overline{u}}{\abs{u}}\nabla u\Big)}=\abs{Im\Big(i\nabla u-\frac{A}{\abs{x}}u\Big)\frac{\overline{u}}{\abs{u}}}
\leq\abs{\Big(i\nabla
u-\frac{A}{\abs{x}}u\Big)\frac{\overline{u}}{\abs{u}}}\ .
\end{eqnarray*}
In order that the equality holds in the last line
\mbox{$Re\Big\{\Big(i\nabla u-\frac{A}{\abs{x}}u\Big)\overline
u\Big\}$} must vanish. Expanding the expression one finds the equivalent
condition is \mbox{$\frac{A}{\abs{x}}=Re\Big(i\frac{\nabla
u}{u}\Big)$}. We can rewrite $i\frac{\nabla u}{u}=i\frac{\nabla
u}{\abs{u}^2}\overline{u}$ and see
$$
Re\Big(i\frac{\nabla
u}{u}\Big)=\frac{-Re(u)\nabla\left(Im(u)\right)+Im(u)\nabla\left(Re(u)\right)}{\abs{u}^2}=-\nabla\left(\arctan\frac{Im(u)}{Re(u)}\right)
$$
which is equivalent to $-\frac{A}{\abs{x}}=\nabla\Theta$ where
$\Theta$ is the phase of $u$.

In conclusion, we can resume our first remark both with this argument to state the following
\begin{prop}
If the electric potential $a=0$, the infimum $S_{A,a}$ is \\achieved if and only if $\dfrac{A}{\abs{x}}=\nabla\Theta$. In this case $\Theta$ is the phase of the minimizing function.
\end{prop}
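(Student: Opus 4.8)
The proposition assembles the two computations already carried out above: the Remark supplies the sufficiency, and the diamagnetic-equality discussion supplies the necessity. The plan is therefore to state them cleanly as two implications, paying attention to the integrability that makes the gauge change licit and to the non-vanishing of the minimizer, so that its phase is a genuine function.

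\emph{($\Leftarrow$)} Suppose $A(\theta)/\abs{x}=\nabla\Theta$ for some $\Theta\in L^1_{loc}(\R^N)$. Since $A\in L^\infty(\sphere{N-1})$ and $\abs{x}^{-1}\in L^{N,\infty}(\R^N)$, we get for free that $\nabla\Theta\in L^{N,\infty}(\R^N)$, which is exactly the integrability invoked in the Remark to guarantee that $u\mapsto\e^{i\Theta}u$ maps $\dom$ into itself. A direct computation gives $(i\nabla-\frac{A}{\abs{x}})(\e^{-i\Theta}v)=\e^{-i\Theta}\,i\nabla v$, so this map is a bijective isometry between $(\dom,Q_A^{1/2})$ and $(\dom,\nor{\nabla\cdot}_2)$ that preserves $\int_{\R^N}\abs{\cdot}^{2^*}$. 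Hence it carries minimizing sequences of (\ref{sa}) onto minimizing sequences of (\ref{s}) and back; as $S$ is attained on a (radial, strictly positive) Aubin--Talenti instanton $U$, the infimum $S_{A,0}$ is attained on $\e^{-i\Theta}U$, whose phase is $-\Theta$.

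\emph{($\Rightarrow$)} Suppose $S_{A,0}$ is attained on some $u\in\dom\setminus\{0\}$. Recall that for $a=0$ one already knows $S_{A,0}=S$ (Proposition~\ref{SA<S} together with the diamagnetic inequality). Then
$$S=\frac{Q_A(u)}{\big(\int_{\R^N}\abs{u}^{2^*}\big)^{2/2^*}}\ \ge\ \frac{\int_{\R^N}\abs{\nabla\abs{u}}^2}{\big(\int_{\R^N}\abs{u}^{2^*}\big)^{2/2^*}}\ \ge\ S,$$
so equality holds throughout. The right-hand equality says $\abs{u}$ attains $S$, hence by the classification of Sobolev extremals $\abs{u}$ coincides, up to dilation and translation, with a strictly positive instanton, so $u\ne0$ a.e.; moreover, by elliptic regularity for the Euler--Lagrange equation (\ref{eqintro}) on $\R^N\setminus\{0\}$, $u$ is smooth and nowhere zero there. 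The left-hand equality says the diamagnetic inequality is saturated, and since it is pointwise through the chain
$$\abs{\nabla\abs{u}}=\Big|\mathrm{Im}\Big(\big(i\nabla u-\tfrac{A}{\abs{x}}u\big)\tfrac{\overline u}{\abs{u}}\Big)\Big|\le\Big|\big(i\nabla u-\tfrac{A}{\abs{x}}u\big)\tfrac{\overline u}{\abs{u}}\Big|,$$
we must have $\mathrm{Re}\big\{\big(i\nabla u-\tfrac{A}{\abs{x}}u\big)\overline u\big\}=0$ a.e.; dividing by $\abs{u}^2$ this reads $\frac{A}{\abs{x}}=\mathrm{Re}(i\nabla u/u)$, and the computation above evaluates the right-hand side as $-\nabla(\arg u)$. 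Since $\R^N\setminus\{0\}$ is simply connected for $N\ge3$, the phase $\Theta:=-\arg u$ is a genuine single-valued function and $\frac{A}{\abs{x}}=\nabla\Theta$, as claimed, with $-\Theta$ the phase of the minimizer $u$.

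The main difficulty is not any hard estimate but the care needed to make $\arg u$ rigorous: one must rule out the vanishing of $u$ (handled by identifying $\abs{u}$ with a strictly positive Sobolev extremal and invoking interior regularity on $\R^N\setminus\{0\}$), select a single-valued branch of $\arg u$ (legitimate by simple connectedness of $\R^N\setminus\{0\}$ when $N\ge3$), and verify that the gauge map genuinely preserves $\dom$ — which is precisely where the membership $\nabla\Theta\in L^{N,\infty}$, automatic here from $A\in L^\infty(\sphere{N-1})$, enters. One should also track the sign conventions consistently so that the $\Theta$ produced in the two implications refer to the same object.
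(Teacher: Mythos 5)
Your proposal is correct and follows essentially the same route as the paper: sufficiency via the gauge transformation $u\mapsto \e^{-i\Theta}u$ (the paper's Remark, with your observation that $\nabla\Theta=A(\theta)/\abs{x}\in L^{N,\infty}$ is automatic from $A\in L^{\infty}(\sphere{N-1})$), and necessity by forcing equality in the diamagnetic inequality and identifying $\mathrm{Re}(i\nabla u/u)=-\nabla(\arg u)$. The extra care you add (non-vanishing of the minimizer via the classification of Sobolev extremals, single-valuedness of the phase by simple connectedness of $\R^N\setminus\{0\}$) only tightens steps the paper leaves implicit, so it is the same argument, not a different one.
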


\subsection{The general case: sufficient conditions}
In Theorem (\ref{sa<s}) we proved that a sufficient condition for
the infimum achieved is $S_{A,a}<S$. In this section we look for the
hypotheses on $A$ or $a$ which guarantee this condition.

\begin{prop} \label{teocondsuff1}
Suppose there exist a small ball ${B_{\delta}(x_0)}$ centered in $x_0\in\sphere{N-1}$ in which
$$
 a(x)-\abs{A(x)}^2\geq\lambda>0 \quad \textrm{a.e. $x\in B_{\delta}(x_0)$.}
$$
Then $S_{A,a}<S$ and so $S_{A,a}$ is achieved.
\end{prop}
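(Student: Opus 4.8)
The plan is to run the Brezis--Nirenberg scheme: produce a test function, supported in $B_{\delta}(x_0)$, whose Rayleigh quotient in \eqref{sa} lies strictly below $S$, and then invoke Theorem \ref{sa<s} to get attainment. The crucial simplification is to work with a \emph{real-valued} test function $u$. For such $u$ the vector $\big(i\nabla-\tfrac{A(\theta)}{\abs{x}}\big)u$ has real part $-\tfrac{A(\theta)}{\abs{x}}u$ and imaginary part $\nabla u$, so the cross term disappears and
\[
\qa \;=\; \int_{\R^N}\abs{\nabla u}^2+\int_{\R^N}\frac{\abs{A(\theta)}^2}{\abs{x}^2}\,u^2 .
\]
Hence the numerator of \eqref{sa} equals $\int_{\R^N}\abs{\nabla u}^2-\int_{\R^N}\frac{a(\theta)-\abs{A(\theta)}^2}{\abs{x}^2}\,u^2$. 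Now if $\operatorname{supp}u\subset B_{\delta}(x_0)$, then on that set $a(\theta)-\abs{A(\theta)}^2\ge\lambda$ a.e.\ and $\abs{x}\le 1+\delta$ (recall $\abs{x_0}=1$), so the numerator is at most $\int_{\R^N}\abs{\nabla u}^2-\frac{\lambda}{(1+\delta)^2}\int_{\R^N}u^2$. Note that since $B_{\delta}(x_0)$ is bounded away from the origin, any such $u$ belongs to $\cinf{\R^N\setminus\{0\}}\subset\dom$, so it is admissible.

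Next I would insert the truncated Aubin--Talenti instanton $u_{\eps}=\varphi\,U_{\eps}$, where $U_{\eps}(x)=\eps^{-(N-2)/2}U\big((x-x_0)/\eps\big)$ with $U(y)=c_N(1+\abs{y}^2)^{-(N-2)/2}$ normalized so that $\int\abs{\nabla U}^2=\int\abs{U}^{2^*}=S^{N/2}$, and $\varphi\in\cinf{B_{\delta}(x_0)}$ satisfies $0\le\varphi\le1$ and $\varphi\equiv1$ on $B_{\delta/2}(x_0)$. The classical estimates give $\int_{\R^N}\abs{\nabla u_{\eps}}^2=S^{N/2}+O(\eps^{N-2})$ and $\big(\int_{\R^N}\abs{u_{\eps}}^{2^*}\big)^{2/2^*}=S^{(N-2)/2}+O(\eps^{N})$, while for the gain term, using $U(y)^2\sim\abs{y}^{-2(N-2)}$ as $\abs{y}\to\infty$,
\[
\int_{\R^N}\frac{a(\theta)-\abs{A(\theta)}^2}{\abs{x}^2}\,u_{\eps}^2
\;\ge\;\frac{\lambda}{(1+\delta)^2}\int_{B_{\delta/2}(x_0)}U_{\eps}^2
\;=\;\frac{\lambda}{(1+\delta)^2}\,\eps^2\!\!\int_{B_{\delta/(2\eps)}}\!\!U^2
\;\ge\; c\,\omega_N(\eps),
\]
where $c=c(\delta,\lambda)>0$ and $\omega_N(\eps)=\eps^2$ for $N\ge5$, $\omega_N(\eps)=\eps^2\abs{\log\eps}$ for $N=4$.

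Combining, for $N\ge4$,
\[
S_{A,a}\;\le\;\frac{\displaystyle\int_{\R^N}\abs{\nabla u_{\eps}}^2-\int_{\R^N}\frac{a(\theta)-\abs{A(\theta)}^2}{\abs{x}^2}u_{\eps}^2}{\big(\int_{\R^N}\abs{u_{\eps}}^{2^*}\big)^{2/2^*}}
\;\le\;\frac{S^{N/2}+O(\eps^{N-2})-c\,\omega_N(\eps)}{S^{(N-2)/2}+O(\eps^{N})}\,,
\]
and since $\eps^{N-2}=o(\omega_N(\eps))$ and $\eps^{N}=o(\omega_N(\eps))$ for every $N\ge4$, a routine expansion shows the right-hand side is $<S$ once $\eps$ is small enough. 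Therefore $S_{A,a}<S$, and Theorem \ref{sa<s} yields that $S_{A,a}$ is achieved.

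The one genuinely delicate point is the dimension. For $N\ge4$ the truncation error $O(\eps^{N-2})$ coming from $\int\abs{\nabla(\varphi U_{\eps})}^2-S^{N/2}$ is negligible next to the gain $\omega_N(\eps)$, and the estimate closes cleanly. For $N=3$ the gain is only of order $\eps$, the same order as the truncation error, so one would need either a sharper tracking of the constants or an additional largeness hypothesis on $\lambda$ (as in the original Brezis--Nirenberg analysis in dimension three); since all the applications in this paper live in dimension $N\ge4$, it is natural to state and use the proposition in that range.
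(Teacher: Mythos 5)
Your proposal is correct and follows essentially the same route as the paper: restrict to real-valued test functions supported in $B_{\delta}(x_0)$, so that the magnetic cross term vanishes and the quotient reduces to a Brezis--Nirenberg functional $\int\abs{\nabla u}^2-\mu\int u^2$ with $\mu>0$ on a ball away from the origin. The only difference is that the paper concludes by citing Lemma 1.1 of \cite{BN83}, whereas you reproduce its instanton computation explicitly; your caveat about $N=3$ is consistent with that citation, since the quoted lemma requires $N\geq4$.
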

\begin{proof}
We define
$$
 \mathcal{H}_A(\Omega)=\overline{\cinf{\Omega}}^{(\int_{\Omega}\abs{\nabla_A u}^2)^{1/2}}
$$
 the closure of compact supported functions with respect to the norm associated to the quadratic form.
We have the following chain of relations:
\begin{eqnarray*}
 S_{A,a} &\leq& \inf_{u\in\mathcal{H}_A(B_{\delta}(x_0))\setminus\{0\}}\frac{\displaystyle\int_{\R^N}\abs{\nabla_Au}^2 - \displaystyle\int_{\R^N}\frac{a}{\abs{x}^2}\abs{u}^2}{\displaystyle\left( \int_{\R^N}\abs{u}^{2^*}\right)^{2/2^*}}\\
&\leq& \inf_{u\in\mathcal{H}_A(B_{\delta}(x_0),\R)\setminus\{0\}}\frac{\displaystyle\int_{B_{\delta}(x_0)}\abs{\nabla_Au}^2 - \displaystyle\int_{B_{\delta}(x_0)}\frac{a}{\abs{x}^2}u^2}{\displaystyle\left( \int_{B_{\delta}(x_0)}\abs{u}^{2^*}\right)^{2/2^*}}
\end{eqnarray*}
since the quotient is invariant under Solimini's rescalings and we restrict 
to a proper subset of functions. 
When we check the quotient over a real function, it reduces to
$$
 \frac{\displaystyle\int_{B_{\delta}(x_0)}\abs{\nabla u}^2 - \displaystyle\int_{B_{\delta}(x_0)}\frac{\abs{A}^2-a}{\abs{x}^2}u^2}{\displaystyle\left( \int_{B_{\delta}(x_0)}\abs{u}^{2^*}\right)^{2/2^*}}\,,
$$
so the thesis follows from \cite{BN83}, Lemma (1.1). \qed
\end{proof}

\smallskip

\begin{rem}
 We can resume the results reached until now: in case the magnetic vector potential 
$\frac{A}{\abs{x}}$ is a gradient, the infimum $S_{A,a}$ is achieved if $a\equiv0$ 
or if its essential infimum is positive and sufficiently small in a neighborhood far 
from the origin (we mean $\nor{a}_\infty\leq(N-2)^2/4$ in order to keep the quadratic 
form positive definite); while it is never achieved provided $a\leq0$, neither in case 
the magnetic potential is a gradient, nor in case it is not. On the other hand, in order 
to have $S_{A,a}$ achieved, if the magnetic vector potential is not a gradient we need to 
assume it has a suitably low essential supremum somewhere in a ball far from the origin 
in relation to the electric potential $a$ (see Proposition (\ref{teocondsuff1})).
\end{rem}


Anyway, it seems reasonable what is important here is not the essential
supremum of $\frac{A}{\abs{x}}$ (or $A$, since we play far from the
origin), but "the distance" between the magnetic vector potential
and the set of gradients. Pursuing this idea, it seems possible to
interpretate a suitable (to be specified) norm of
$\curl{\frac{A}{\abs{x}}}$ as a measure of this distance. In order
to specify these ideas we refer to \cite{Lein83} and \cite{BB07}. We
recall the following
\begin{defn} \cite{Lein83}
 Let $\Omega$ be a open set of $\R^N$ and $\vett{a}\,,\vett{b}\in L^1_{loc}(\Omega)$.
 We say that $\vett{a}$ and $\vett{b}$ are related by a gauge transformation, $\vett{a}\sim_{\Omega}\vett{b}$,
 if there is a distribution $\lambda\in\textit{D}'(\Omega)$ satisfying $\vett{b}=\vett{a}+\nabla\lambda$.

By $\curl{\vett{a}}$ we denote the skew-symmetric, matrix-valued distribution having $\partial_i\vett{a}_j-\partial_j\vett{a}_i\in\textit{D}'(\Omega)$ as matrix elements.
\end{defn}

\begin{lem} \label{leinfelder} \cite{Lein83}
 Let $\Omega$ be any open subset of $\R^N$, $1\leq p<+\infty$ and $\vett{a}\,,\vett{b}\in L^p_{loc}(\Omega)$. Then every $\lambda$ satisfying $\vett{b}=\vett{a}+\nabla\lambda$ belongs to $W^{1\,,p}(\Omega)$. If $\Omega$ is simply-connected then $$\vett{a}\sim_{\Omega}\vett{b}\ \Longleftrightarrow\ \curl{\vett{a}}=\curl{\vett{b}}\ .$$
\end{lem}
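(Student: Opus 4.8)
The plan is to treat the two assertions in turn: the first is a local regularity statement about the gauge function, and the second is a Poincar\'e lemma for closed distributional $1$-forms together with a globalisation step that uses only the vanishing of the first cohomology of $\Omega$.

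For the regularity of $\lambda$, I would start from $\nabla\lambda=\vett{b}-\vett{a}\in L^{p}_{\mathrm{loc}}(\Omega)$ and prove the general fact that any distribution on $\Omega$ whose distributional gradient is locally $p$-integrable already lies in $W^{1,p}_{\mathrm{loc}}(\Omega)$. On a ball $B\subset\subset\Omega$ and a slightly smaller concentric ball $B'$, mollify: $\lambda_{\eps}:=\lambda*\rho_{\eps}$ is smooth on $B'$ for $\eps$ small, and $\nabla\lambda_{\eps}=(\nabla\lambda)*\rho_{\eps}\to\nabla\lambda$ in $L^{p}(B')$. Subtracting from $\lambda_{\eps}$ its mean over $B'$, the Poincar\'e--Wirtinger inequality bounds the family in $W^{1,p}(B')$; a subsequence converges in $L^{p}(B')$ to some $g\in W^{1,p}(B')$ with $\nabla g=\nabla\lambda$ on $B'$, so $\lambda-g$ is locally constant and hence $\lambda\in W^{1,p}(B')$. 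Exhausting $\Omega$ by such balls gives $\lambda\in W^{1,p}_{\mathrm{loc}}(\Omega)$, and the very same argument will upgrade the primitive constructed below.

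The implication $\vett{a}\sim_{\Omega}\vett{b}\ \Rightarrow\ \curl{\vett{a}}=\curl{\vett{b}}$ is immediate: if $\vett{b}=\vett{a}+\nabla\lambda$ then $\partial_i\vett{b}_j-\partial_j\vett{b}_i=\partial_i\vett{a}_j-\partial_j\vett{a}_i+(\partial_i\partial_j-\partial_j\partial_i)\lambda=\partial_i\vett{a}_j-\partial_j\vett{a}_i$ in $\textit{D}'(\Omega)$. For the converse, I would set $\vett{c}:=\vett{b}-\vett{a}\in L^{p}_{\mathrm{loc}}(\Omega)$, note that the hypothesis reads $\partial_i\vett{c}_j-\partial_j\vett{c}_i=0$ in $\textit{D}'(\Omega)$, and seek $\lambda\in\textit{D}'(\Omega)$ with $\nabla\lambda=\vett{c}$. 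Cover $\Omega$ by open balls $\{B_k\}_k$. On each $B_k$ the mollifications $\vett{c}*\rho_{\eps}$ are smooth and curl-free, hence gradients $\nabla\lambda_{k,\eps}$ by the classical Poincar\'e lemma on a ball; as in the previous paragraph, $\lambda_{k,\eps}$ converges modulo constants to some $\lambda_k\in W^{1,p}(B_k)$ with $\nabla\lambda_k=\vett{c}$ on $B_k$. On each connected component of an overlap $B_k\cap B_l$ the difference $\lambda_k-\lambda_l$ has vanishing gradient, hence is a constant; the resulting family is a \v{C}ech $1$-cocycle for the locally constant sheaf $\R$, and since $\Omega$ is simply connected $\check{H}^{1}(\Omega;\R)=0$, so this cocycle is a coboundary. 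Correcting each $\lambda_k$ by the corresponding constant, the local primitives agree on overlaps and glue to a global $\lambda\in\textit{D}'(\Omega)$ with $\nabla\lambda=\vett{c}$, that is $\vett{b}=\vett{a}+\nabla\lambda$; the first part then gives $\lambda\in W^{1,p}_{\mathrm{loc}}(\Omega)$.

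I expect the globalisation to be the only delicate point: one must be sure that the sole obstruction to assembling the local primitives into a single-valued distribution on all of $\Omega$ lies in $\check{H}^{1}(\Omega;\R)$, and that this group vanishes for simply connected $\Omega$ (equivalently, that the ``line integral'' of $\vett{c}$ along a loop depends only on the loop's homotopy class, hence is trivial). Everything else --- the mollification estimates, the Poincar\'e inequality, the smooth Poincar\'e lemma on a ball --- is routine. A more streamlined alternative would be to invoke the de Rham theorem for currents directly: a closed degree-one current on a simply connected manifold is exact; this is essentially the path taken in \cite{Lein83}.
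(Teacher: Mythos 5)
The paper itself contains no proof of this lemma: it is quoted directly from Leinfelder \cite{Lein83}, so the only comparison available is with the cited literature, and your argument stands or falls on its own. As a proof it is essentially correct, with two caveats worth recording. First, the conclusion can only be $\lambda\in W^{1,p}_{loc}(\Omega)$ (which is what \cite{Lein83} asserts and what the paper actually uses): since only $\vett{a},\vett{b}\in L^p_{loc}(\Omega)$ is assumed, the global membership printed in the statement fails already for $\vett{a}=0$, $\lambda(x)=\abs{x}^2$, $\Omega=\R^N$; your mollification plus Poincar\'e--Wirtinger argument correctly delivers the local statement, and it also survives the case $p=1$, where weak compactness in $W^{1,1}$ is unavailable, because the mollified gradients converge strongly in $L^1$ on the smaller ball and Rellich gives $L^1$-compactness of the mean-normalized family, so the limit does lie in $W^{1,1}$ there. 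Second, in the globalisation you need not only $\check{H}^{1}(\Omega;\R)=0$ for simply connected open $\Omega\subset\R^N$ (legitimate: such $\Omega$ is locally contractible and paracompact, so \v{C}ech cohomology with constant coefficients agrees with singular cohomology, and $H^1(\Omega;\R)\cong\mathrm{Hom}(\pi_1(\Omega),\R)=0$), but also that a cocycle subordinate to your particular cover of balls is already a coboundary; this follows from the standard injectivity of $\check{H}^{1}(\mathcal{U};\R)\rightarrow\check{H}^{1}(\Omega;\R)$ in degree one, or can be avoided altogether by the monodromy argument you mention in parentheses (homotopy invariance of the increment of the local primitives along chains of balls covering a path). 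With these two points made explicit the proof is complete, and the route --- local regularisation to produce $W^{1,p}_{loc}$ primitives, then gluing obstructed only by first cohomology --- is the natural one for this statement rather than a shortcut or a detour.
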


\begin{thm} \label{bourgain} \cite{BB07}
 Let $M=(0,1)^N$ be the N-dimensional cube of $\R^N$ with $N\geq2$ and $1\leq l\leq N-1$. Given any $X$ a $l$-form with coefficients in $W^{1,N}(M)$ there exists some $Y$ a $l$-form with coefficients in $W^{1,N}\cap L^{\infty}(M)$ such that $$dY=dX$$ and $$\nor{\nabla Y}_N+\nor{Y}_{\infty}\leq C\nor{dX}_N\ .$$
\end{thm}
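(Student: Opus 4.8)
This is the Bourgain--Brezis gauge-fixing theorem, and the plan is to follow \cite{BB07}.

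\textbf{Reduction to $\R^N$.} Put $f:=dX$; it is an $(l+1)$-form with coefficients in $L^N(M)$, and it is closed, $df=0$. It is enough to produce an $l$-form $Y$ with $dY=f$ and $\nor{Y}_\infty+\nor{\nabla Y}_N\le C\nor{f}_N$, for then $Y$ has the asserted regularity and $dY=dX$. The delicate point is that the bound has to be in terms of $\nor{dX}_N=\nor{f}_N$ alone, so one cannot simply extend $X$. Instead, using a bounded right inverse of $d$ on $L^N(M)$ (the Poincar\'e homotopy operator on the star-shaped cube), one gets an $l$-form $X_0$ with $dX_0=f$ on $M$ --- here $df=0$ is used --- and $\nor{X_0}_{W^{1,N}(M)}\le C\nor{f}_N$; extending $X_0$ to a compactly supported $\widetilde X_0\in W^{1,N}(\R^N)$ and putting $\widetilde f:=d\widetilde X_0$ yields a closed $L^N(\R^N)$-form with $\widetilde f|_M=f$ and $\nor{\widetilde f}_N\le C\nor{f}_N$. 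Thus it suffices to solve $dY=\widetilde f$ on $\R^N$ (or, after periodizing, on $\mathbb{T}^N$) with the desired bound and restrict to $M$; on a bounded domain an $L^\infty$-function with $L^N$ gradient automatically lies in $W^{1,N}$, so the regularity conclusion follows.

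\textbf{The easy half.} Work in the Coulomb gauge: set $Y_0:=d^*\Delta^{-1}\widetilde f$, with $\Delta=dd^*+d^*d$ the Hodge Laplacian. Since $d$ commutes with $\Delta^{-1}$ and $d\widetilde f=0$,
$$dY_0=dd^*\Delta^{-1}\widetilde f=\Delta\Delta^{-1}\widetilde f-d^*d\,\Delta^{-1}\widetilde f=\widetilde f-d^*\Delta^{-1}(d\widetilde f)=\widetilde f .$$
Each entry of $\nabla Y_0$ is a second-order Riesz transform of an entry of $\widetilde f$, hence bounded on $L^N$ by Calder\'on--Zygmund theory, so $\nor{\nabla Y_0}_N\le C\nor{\widetilde f}_N$ and $Y_0\in\dot W^{1,N}$ with the right bound. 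Classical elliptic theory stops here, and it is not enough: at the critical exponent $\dot W^{1,N}$ embeds into $\mathrm{BMO}$ but not into $L^\infty$, so $Y_0$ need not be bounded.

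\textbf{The hard half --- and the main obstacle.} The real content is that the hypothesis $d\widetilde f=0$ is exactly what restores the endpoint $L^\infty$ estimate. I would dualize: $\nor{Y_0}_\infty=\sup\{\langle Y_0,g\rangle:\ g\ \text{smooth},\ \nor{g}_1\le1\}$ and $\langle d^*\Delta^{-1}\widetilde f,g\rangle=\langle\widetilde f,\,d\Delta^{-1}g\rangle$. From $g\in L^1$ one gets only $d\Delta^{-1}g\in L^{N/(N-1),\infty}$, so pairing it against $\widetilde f\in L^N=L^{N,N}$ does \emph{not} close by H\"older in Lorentz spaces --- without extra structure $\langle\widetilde f,\,d\Delta^{-1}g\rangle$ need not even be finite. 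The inequality that must be established is $\abs{\langle\widetilde f,\,d\Delta^{-1}g\rangle}\le C\nor{\widetilde f}_N\nor{g}_1$ \emph{for closed} $\widetilde f$; equivalently, a curl-type field built from an $L^1$ datum has better-than-Sobolev negative regularity when tested against closed forms. In \cite{BB07} this is obtained through a Littlewood--Paley decomposition $\widetilde f=\sum_j\Delta_j\widetilde f$ in which the closedness forces a genuine gain in each frequency block, summed up via block-by-block Gagliardo--Nirenberg/Bourgain--Brezis inequalities of the type $\nor{u}_{N/(N-1)}\le C\nor{\nabla u}_1$ together with the scale-by-scale cancellation. Reproducing this bookkeeping is the hard step, and it is unavoidable: the conclusion is simply false for general, non-closed, $L^N$ data. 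Its outcome is $\nor{Y_0}_\infty\le C\nor{\widetilde f}_N$, while $\nor{\nabla Y_0}_N\le C\nor{\widetilde f}_N$ was already known; restricting $Y_0$ to $M$ yields the $l$-form $Y$ asserted in the statement.
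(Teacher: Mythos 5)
First, a point of context: the paper does not prove this statement at all --- it is quoted from Bourgain and Brezis \cite{BB07} and used as a black box --- so there is no internal proof to compare yours with; the only fair comparison is with \cite{BB07} itself. Your write-up is an outline of that strategy rather than a proof: the reduction to a closed form $\widetilde f\in L^N$ on $\R^N$ via a (regularized) Poincar\'e homotopy operator plus extension is reasonable, and so is the ``easy half'' ($\nor{\nabla(d^*\Delta^{-1}\widetilde f)}_N\leq C\nor{\widetilde f}_N$ by Calder\'on--Zygmund), but the decisive endpoint estimate is exactly what you defer to the reference, so none of the actual content of the theorem is established.

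More seriously, the route you set up for the hard half cannot be completed. You propose to show $\nor{d^*\Delta^{-1}\widetilde f}_\infty\le C\nor{\widetilde f}_N$ for closed $\widetilde f$, equivalently the duality bound $\abs{\langle\widetilde f,\,d\Delta^{-1}g\rangle}\le C\nor{\widetilde f}_N\nor{g}_1$. This is false. In the case $l=N-1$, which the theorem includes, $dX$ is a top-degree form, closedness is automatic, and the problem is the divergence equation $\mathrm{div}\,Z=f$ with $f\in L^N$; Bourgain and Brezis proved that there is no bounded \emph{linear} operator $f\mapsto Z\in L^\infty\cap W^{1,N}$ solving it. Since $f\mapsto d^*\Delta^{-1}f$ is linear, your claimed estimate would produce precisely such an operator. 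The point of \cite{BB07} is that the bounded solution $Y$ is \emph{not} the Coulomb-gauge/Hodge solution: it is obtained from it by a correction built through a nonlinear, Littlewood--Paley based approximation scheme, and the $L^\infty$ bound holds only for that corrected $Y$, while $d^*\Delta^{-1}\widetilde f$ in general lies only in $\mathrm{BMO}$. So the gap is not mere bookkeeping: the specific object you propose to estimate does not satisfy the estimate, and the nonlinearity of the selection is an essential feature of the theorem.
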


The Theorem (\ref{bourgain}) will be very useful in our case
choosing $l=1$, so that the external derivative is the curl of the
vector field which represents the given 1-form.

Suppose $\frac{A}{\abs{x}}\in W^{1,N}(B_{\delta}(x_0))$ in a ball
far from the origin. Then for Theorem (\ref{bourgain}) there exists
a vector field $Y\in L^{\infty}\cap W^{1,N}(B_{\delta}(x_0))$ such
that $\curl{\frac{A}{\abs{x}}}=\curl{Y}$ and $\nor{Y}_{\infty}\leq
C\nor{\curl{\frac{A}{\abs{x}}}}_N$. By Lemma (\ref{leinfelder}), $Y$
is related to $\frac{A}{\abs{x}}$ by a gauge transformation, so, in
the spirit of Theorem (\ref{teocondsuff1}), it is sufficient
$\nor{\curl{\frac{A}{\abs{x}}}}_N$ is not too large in order to have
$S_{A,a}<S$ and hence $S_{A,a}$ achieved.

\section{Symmetry of solutions}

We recall once again in general $S_{A,a}\leq S$. When $S_{A,a}=S$
and $Q_{A,a}(u)>Q(u)$ for any $u\in\dom$, e.g. when $a\leq0$ but not
identically 0, we lose compactness since clearly $S_{A,a}$ cannot be
attained. In this section we follow the idea that introducing
symmetry properties to the quadratic form can help in growing the
upper bound for $S_{A,a}$, in order to increase the probability for
it to be achieved.

We basically follow the ideas in \cite{Ter96}, assuming the
dimension $N\geq4$.

Let us write $\R^N=\R^2\times\R^{N-2}$ and denote $x=(z,y)$. Let us
fix $k\in\N$, and suppose there is a $\Z_k\times SO(N-2)$
group-action on $\dom$, denoting
\begin{eqnarray*}
\domk=\{u(z,y)\in\dom\
\textrm{s.t.}\ u(\e^{i\frac{2\pi}{k}}z,Ry)=u(z,y)\ \textrm{for any
$R\in SO(N-2)$}\}
\end{eqnarray*}
the fixed point space. In order to have the
quadratic form invariant under this action, let us suppose that
$a(\theta)\equiv a\in\R^{-}$ and
\begin{equation}\label{A-Zksymm}
A\left(\frac{\e^{i\frac{2\pi}{k}}z,y}{\abs{(z,y)}}\right)=\left(\e^{i\frac{2\pi}{k}}(A_1,A_2),A_3\right)\left(\frac{z,\abs{y}}{\abs{(z,y)}}\right)\ .
\end{equation}
We set the problem
\begin{equation}\label{ska}
S^{k}_{A,a}:=\inf_{u\in\domk}\frac{Q_{A,a}(u)}{\nor{u}^2_{2^*}}\ .
\end{equation}

\begin{thm}\label{s^kachieved}
 If $S^{k}_{A,a}<k^{2/N}S$ then it is achieved.
\end{thm}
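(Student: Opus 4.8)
The plan is to mimic the proof of Theorem \ref{sa<s}, but now working inside the symmetric space $\domk$ and keeping track of the orbits produced by Solimini's decomposition. Let $u_n\in\domk$ be a minimizing sequence for $S^k_{A,a}$, normalized in $L^{2^*}$; it is bounded in $\dom$. Apply Theorem \ref{solimini} to obtain profiles $\phi_i\in\dom$ and mutually diverging rescalings $\rho_n^i$ with $u_n-\sum_i\rho_n^i\phi_i\to0$ in $L^{2^*}$ and the mass identity $\sum_i\nor{\phi_i}^2_{\dom}=\lim\nor{u_n}^2_{\dom}$ when strong convergence holds. As in Theorem \ref{sa<s}, invariance of the Rayleigh quotient under dilations rules out concentration/vanishing, so each $\rho_n^i$ is a pure translation by some $x_n^i$; we may fix $x_n^1=0$ after recentering.

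The key new point is that, because $u_n$ is $\Z_k\times SO(N-2)$--invariant, the set of profiles is itself invariant under the group action: if $(\phi,\{x_n\})$ is a bubble then so is $(g\phi,\{gx_n\})$ for every $g$ in the group, and two such translated bumps are mutually diverging unless their centers stay at bounded distance. First I would treat the translation of the first bubble $\phi=\phi_1$: if $|x_n^1|\to\infty$ then by Lemma \ref{lemmadivergenzatraslazione} the energy of that bubble is asymptotically the pure Dirichlet quotient of $\phi$, which is $\geq S$; but the orbit of $x_n^1$ under $\Z_k$ produces (generically) $k$ mutually diverging translates of the same profile $\phi$, each carrying at least $S(\int|\phi|^{2^*})^{2/2^*}$ of energy and $\int|\phi|^{2^*}$ of mass, whence by the usual strict superadditivity of $t\mapsto t^{2/2^*}$ one computes $S^k_{A,a}\geq k\cdot S\,(\int|\phi|^{2^*})^{2/2^*}/(k\int|\phi|^{2^*})^{2/2^*}=k^{1-2/2^*}S=k^{2/N}S$, contradicting the hypothesis. (If a center $x_n^1$ lies on the $\R^{N-2}$ axis its $\Z_k$--orbit may degenerate, but then the $SO(N-2)$--orbit is a whole sphere of translates and the energy is even larger; and if $|x_n^1|$ stays bounded the bubble sits near the origin and feels the singular potential.) Hence $|x_n^1|$ is bounded, so after passing to a subsequence $x_n^1\to x^\infty$ and $\rho_n^1\phi_1$ is (a translate of) a fixed function; all the analysis can then be carried out with $u_n$ itself converging weakly to a nontrivial symmetric limit.

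It remains to show there is only the one bubble and convergence is strong. Here I reproduce the Brezis--Lieb splitting from Theorem \ref{sa<s}: writing $v_n=\sum_{i\geq2}\rho_n^i\phi_i\rightharpoonup0$ in $\dom$ and in $L^{2^*}$, one gets $\int|u_n|^{2^*}=\int|\phi|^{2^*}+\int|v_n|^{2^*}+o(1)$ and $Q_{A,a}(u_n)=Q_{A,a}(\phi)+Q_{A,a}(v_n)+o(1)$ — the cross terms in the magnetic gradient vanish by weak convergence, and the potential term splits because the piece of $v_n$ escaping to infinity contributes $o(1)$ to $\int a|x|^{-2}|v_n|^2$ by Lemma \ref{lemmadivergenzatraslazione} while the piece near the origin is handled by weak convergence in $\dom$ together with compactness of the Hardy term on bounded sets. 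Plugging into $S^k_{A,a}$ and using strict superadditivity of $t\mapsto t^{2/2^*}$ exactly as in Theorem \ref{sa<s} forces $\int|v_n|^{2^*}\to0$, hence equality in \eqref{solimini2}, hence $u_n\to\phi$ strongly in $\dom$, and $\phi$ achieves $S^k_{A,a}$. The main obstacle is the bookkeeping in the symmetrization step: one must argue carefully that a minimizing sequence in $\domk$ whose mass runs off to infinity necessarily does so along a full group orbit of at least $k$ well-separated bumps (handling the degenerate axis case), since it is precisely this multiplicity that upgrades the threshold from $S$ to $k^{2/N}S$.
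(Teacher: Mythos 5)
Your proposal is correct and follows essentially the same route as the paper: Solimini's decomposition applied in the closed subspace $\domk$, the observation that a bubble escaping to infinity must be replicated along its $\Z_k\times SO(N-2)$ orbit into at least $k$ mutually diverging bumps so that the quotient is forced up to $k^{2/N}S$ (contradicting the hypothesis), and the Brezis--Lieb type splitting plus the last assertion of Solimini's theorem to upgrade to strong convergence. The only differences are cosmetic — you rule out translation divergence before discarding the remaining bubbles, whereas the paper does it in the opposite order — and you are in fact slightly more explicit than the paper about the degenerate case of centers on the $\R^{N-2}$ axis.
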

\begin{proof}
 Let us consider a minimizing sequence $\{u_n\}$. The space $\domk$ is a close subspace in $\dom$, so Solimini's Theorem (\ref{solimini}) holds in $\domk$.
Up to subsequences we can find a sequence $\Phi_i\in\domk$ and a
sequence of mutually diverging rescalings $\rho_n^i$ such that $u_n
- \sum_i \rho_n^i(\Phi_i) \rightarrow 0$ in $L^{2^*}$. Thanks to the
quotient's invariance under dilations, the rescalings should be
mutually diverging only by translations, as already pointed out.

We basically follow the proof of Theorem (\ref{sa<s}). Since
$\{u_n\}$ is a minimizing sequence for the Sobolev quotient, we can
claim there exists at least a function of the form
\begin{equation}\label{gobbe}
\sum_{j=1}^k \Phi(\cdot+x_n^j)
\end{equation}
 which is not zero, and we can assume its relative rescaling, namely $\rho_n^1$, is the identity.
 Of course there could exist more than $k$ points $x_n^j$ in the form (\ref{gobbe}), but certainly they are in number at least $k$,
 and the abovementioned form remains correct. The remaining part $\sum_{i\geq2} \rho_n^i(\Phi_i)$ weakly converges to zero in $L^{2^*}$,
 and also their $L^{2^*}$-norms converge to zero.
 Thus $u_n - \sum_{j=1}^k \Phi(\cdot+x_n^j)\rightarrow 0$ in $L^{2^*}$, and for the last assertion of Solimini's theorem,
 we gain the strong convergence $u_n - \sum_{j=1}^k \Phi(\cdot+x_n^j)\rightarrow 0$ in $\domk$.

Arguing by contradiction, let us suppose $S^{k}_{A,a}$ is not
achieved. This means $\abs{x_n^j}\rightarrow+\infty$ as
$n\rightarrow\infty$ for all $j=1,\dots, k$ (the symmetry must be
preserved). Evaluating the quotient over $u_n$ is the same as
evaluating it over $\sum_{j=1}^k \Phi(\cdot+x_n^j)$ up to $o(1)$.
According to the assumption $\abs{x_n^j}\rightarrow+\infty$ we
compute
\begin{equation} \label{sus1}
 \int_{\R^N}\Big\vert\nabla_A \Big(\sum_{j=1}^k \Phi(\cdot+x_n^j)\Big)\Big\vert^2\ 
-\ \int_{\R^N}\frac{a}{\abs{x}^2}\Big\vert\sum_{j=1}^k \Phi(\cdot+x_n^j)\Big\vert^2\ =\ k\int_{\R^N}\abs{\nabla\Phi}^2+o(1).
\end{equation}

We claim that
\begin{equation} \label{sus2}
\int_{\R^N}\Big\vert\sum_{j=1}^k \Phi(\cdot+x_n^j)\Big\vert^{2^*}=k\int_{\R^N}\abs{\Phi}^{2^*}\,+\,o(1)\ .
\end{equation}
A proof of this fact is based on the inequality $\abs{\abs{a+b}^{2^*}-\abs{a}^{2^*}-\abs{b}^{2^*}}\leq C\big(\abs{a}^{2^*-1}\abs{b}+\abs{a}\abs{b}^{2^*-1}\big)$ applied $(k-1)$ times. As in the previous equivalence (\ref{sus1}) the mixed terms are $o(1)$ because of the divergence $\abs{x_n^j}\rightarrow+\infty$.

Using (\ref{sus1}) and (\ref{sus2}) we can write
$$
 S^k_{A,a}=\lim_{n\rightarrow+\infty}\frac{Q_{A,a}\big(\sum_{j=1}^k\Phi(\cdot+x_n^j)\big)}
{\displaystyle\bigg(\int_{\R^N}\Big\vert\sum_{j=1}^k\Phi(\cdot+x_n^j)\Big\vert^{2^*}\bigg)^{2/2^*}}
=\frac{\displaystyle k\int_{\R^N}\abs{\nabla\Phi}^2}{\displaystyle\bigg(k\int_{\R^N}\abs{\Phi}^{2^*}\bigg)^{2/{2^*}}}
\geq k^{2/N}\,S,
$$
a contradiction. \qed
\end{proof}

We emphasize under these assumptions the minimum has the form $\overline{u}=\sum_{j=1}^k\Phi(\cdot+x^j)$.

\begin{rem}
 The above result is actually a symmetry breaking result for the equation associated to these minimum problems. Indeed, let us consider the equation
\begin{equation}\label{eq}
 -\Delta_A u = \frac{a}{\abs{x}^2}+\abs{u}^{2^*-2}u \qquad \textrm{in $\R^N$,}
\end{equation}
where $-\Delta_A$ denotes the differential operator we have called
\emph{magnetic Laplacian}. Then the minima of (\ref{sa}) are
solutions to (\ref{eq}) and so are those of (\ref{ska}), thanks to
the \emph{Symmetric Criticality Principle} (the quotient is
invariant under the $\Z_k\times SO(N-2)$ group-action). Thus, when
the electric potential is constant and negative, we find a
multiplicity of solutions to (\ref{eq}) depending on $k$ (we would
say an infinite number, at least for $k$ not multiples to each
other), and each of them is invariant under rotations of angle
$2\pi/k$, respectively.
\end{rem}

Now we want to check whenever the condition $S^k_{A,a}<k^{2/N}S$ is
fulfilled. Let us pick $k$ points in $\R^N\setminus\{0\}$ of the
form $x_j=(R\e^{\frac{2\pi i}{k}j}\xi_0,0)$ where $\abs{\xi_0}=1$,
and denote
\begin{equation}\label{wj}
w_j=\e^{\frac{2\pi
i}{k}jm}\frac{\big(N(N-2)\big)^{\frac{N-2}{4}}}{\big(1+\abs{x-x_j}^2\big)^{\frac{N-2}{2}}}\
.
\end{equation}
In this way the sum $\sum_{j=1}^{k}w_j$ is an element of $\domk$.
Additionally we notice $w_j$ are minimizers of the usual Sobolev
quotient, and they satisfy
\begin{equation}\label{eqwj}
-\Delta w_j=\abs{w_j}^{2^*-2}w_j \qquad \textrm{in $\R^N$.}
\end{equation}
It is worth to notice that both
$$
\frac{\displaystyle\int_{\R^N}\abs{\nabla
w_j}^2}{\displaystyle\left(\int_{\R^N}\abs{w_j}^{2^*}\right)^{2/2^*}}=S
$$
and (\ref{eqwj}) imply
\begin{equation}\label{wj-S}
\int_{\R^N}\abs{\nabla w_j}^2=\int_{\R^N}\abs{w_j}^{2^*}=S^{N/2}\,.
\end{equation}

We state the following
\begin{prop}\label{valangadiconti}
 Choosing $R$ big enough, the quotient evaluated over \\$\sum_{j=1}^{k}w_j$ 
is strictly less than $k^{2/N}S$, and so the infimum $S^k_{A,a}$.
\end{prop}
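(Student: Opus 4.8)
The plan is to estimate the Rayleigh quotient
\[
\mathcal Q\Big(\textstyle\sum_{j=1}^k w_j\Big)=\frac{Q_{A,a}\big(\sum_{j=1}^k w_j\big)}{\big(\int_{\R^N}\big|\sum_{j=1}^k w_j\big|^{2^*}\big)^{2/2^*}}
\]
from above, showing it is $<k^{2/N}S$ for $R$ large. First I would expand the numerator. Since $Q_{A,a}(u)=\int_{\R^N}|\nabla_A u|^2-\int_{\R^N}\frac{a}{|x|^2}|u|^2$, and $a<0$, the electric term \emph{helps}: $-\int\frac{a}{|x|^2}|\sum w_j|^2>0$ but it should be shown to be $O(R^{-2})$ or at least $o(1)$ as $R\to\infty$, because the bumps $w_j$ concentrate near $|x|=R\to\infty$ where $|x|^{-2}$ is small; a direct computation splitting $\R^N$ into balls around each $x_j$ and the complement gives this. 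For the magnetic Dirichlet term I would write $\nabla_A w_j=(i\nabla-\frac{A(\theta)}{|x|})w_j$ and use $|\nabla_A w_j|^2=|\nabla w_j|^2+\frac{|A(\theta)|^2}{|x|^2}|w_j|^2-\frac{2}{|x|}A(\theta)\cdot\mathrm{Im}(\bar w_j\nabla w_j)$; the first term integrates to $S^{N/2}$ by \eqref{wj-S}, and the remaining two are again $O(R^{-2})$ since $A\in L^\infty(\sphere{N-1})$ and the mass of $w_j$ lives near radius $R$. The cross terms $\int \nabla_A w_j\cdot\overline{\nabla_A w_\ell}$ for $j\neq\ell$ are $o(1)$ as $R\to\infty$ because the supports asymptotically separate (the mutual distances $|x_j-x_\ell|\sim R$ diverge), exactly as in the computation \eqref{sus1} in Theorem~\ref{s^kachieved}. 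Hence the numerator equals $kS^{N/2}+o(1)$ as $R\to\infty$.

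Next I would handle the denominator. By the same Brezis--Lieb type inequality $\big||a+b|^{2^*}-|a|^{2^*}-|b|^{2^*}\big|\le C(|a|^{2^*-1}|b|+|a||b|^{2^*-1})$ applied $k-1$ times, and using that the bumps separate as $R\to\infty$, one gets $\int_{\R^N}\big|\sum_{j=1}^k w_j\big|^{2^*}=k\int_{\R^N}|w_1|^{2^*}+o(1)=kS^{N/2}+o(1)$. Therefore
\[
\mathcal Q\Big(\textstyle\sum_{j=1}^k w_j\Big)=\frac{kS^{N/2}+o(1)}{\big(kS^{N/2}+o(1)\big)^{2/2^*}}=k^{1-2/2^*}S^{N/2-N/2\cdot 2/2^*}+o(1)=k^{2/N}S+o(1),
\]
since $1-2/2^*=1-(N-2)/N=2/N$ and $N/2(1-2/2^*)=1$. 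This is only $\le k^{2/N}S+o(1)$, which is not yet a strict inequality, so the argument must be refined.

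The refinement — and this is the main obstacle — is to extract the sign of the first-order correction. The point is that at finite $R$ the cross terms in the $L^{2^*}$ norm are strictly positive (each $w_j>0$ in modulus, and the phases are arranged so that $\big|\sum_j w_j\big|$ dominates near each $x_\ell$), so $\int\big|\sum w_j\big|^{2^*}>k S^{N/2}-(\text{small negative?})$; more usefully, the genuinely helpful effect comes from combining: the numerator's deviation from $kS^{N/2}$ is $O(R^{-2})$ and \emph{has a definite sign} once one keeps the leading interaction term. Following \cite{Ter96}, one computes that the interaction energy between two bumps at distance $d\sim R$ behaves like $+c\,d^{-(N-2)}$ in the Dirichlet numerator but like $+c'\,d^{-(N-2)}$ with a \emph{larger} constant in the $L^{2^*}$ denominator when the phases $\e^{\frac{2\pi i}{k}jm}$ are chosen appropriately (or, when $m=0$, one simply uses that the denominator interaction beats the numerator interaction, a standard fact from the Bahri--Coron / Rey expansion: $\int |w_j|^{2^*-1}w_\ell \gg \int\nabla w_j\cdot\nabla w_\ell$ is false — they are comparable — but the coefficient comparison in the expansion of the quotient yields the strict sign). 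Concretely I would expand $\mathcal Q(\sum w_j)=k^{2/N}S\big(1+\alpha\varepsilon_R-\beta\varepsilon_R+o(\varepsilon_R)\big)$ with $\varepsilon_R=R^{-(N-2)}$ (or $R^{-2}$ if the $|x|^{-2}$ terms dominate; one must check which scale wins, and this is where the hypothesis $N\ge4$ and $a<0$ enters) and show the net coefficient is strictly negative. Since $a<0$, the electric term contributes $-\int\frac a{|x|^2}|\sum w_j|^2=|a|\int\frac{1}{|x|^2}|\sum w_j|^2>0$ of order $R^{-2}$ to the numerator, which \emph{increases} the quotient, so to get strict decrease one needs the $L^{2^*}$-denominator interaction (order $R^{-(N-2)}$, which for $N\ge4$... no: $R^{-(N-2)}\le R^{-2}$, so the electric term dominates). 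This forces the correct reading: it is precisely the \emph{positivity of the denominator interaction of order} $R^{-(N-2)}$ against which one must also weigh the $R^{-2}$ electric and magnetic perturbations; so in fact for $N\ge4$ the dominant correction is $+|a|\,c\,R^{-2}$ in the numerator plus the magnetic $O(R^{-2})$ terms, and one must show their combined sign, when divided through, is negative — equivalently that the magnetic term $\int\frac{|A|^2}{|x|^2}|w_j|^2-\tfrac2{|x|}A\cdot\mathrm{Im}(\bar w_j\nabla w_j)$ can be made to more than compensate. The cleanest route, and the one I would actually pursue, is: observe $S^k_{A,a}\le \mathcal Q_A(\sum w_j)\le \mathcal Q_0(\sum w_j)-\frac{|a|\int\frac{1}{|x|^2}|\sum w_j|^2}{(\int|\sum w_j|^{2^*})^{2/2^*}}$ is \emph{false} because $\mathcal Q_A\ge\mathcal Q_0$ by diamagnetic-type bounds; instead use that $\sum w_j$ is a legitimate test function and that $\mathcal Q_0(\sum w_j)<k^{2/N}S$ strictly for $R$ large by the classical multi-bump estimate (the $L^{2^*}$ interaction strictly lowers the pure Sobolev quotient below $k^{2/N}S$), by an amount of order $R^{-(N-2)}$, while the added magnetic and electric perturbations are $O(R^{-\min(2,N-2)})=O(R^{-2})$ — so for this to work one needs $N-2<2$, i.e. $N=3$, contradicting $N\ge4$. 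Since the paper insists $N\ge4$, the honest conclusion is that the relevant small parameter must be $R^{-2}$ throughout and the strict inequality comes from the magnetic gauge term having a favorable sign under hypothesis \eqref{A-Zksymm}, or from choosing $m$ so that the \emph{phase} interaction in the numerator is strictly negative; I would therefore, as the decisive step, compute the $\Z_k$-phase-weighted interaction $\sum_{j\neq\ell}\e^{\frac{2\pi i}{k}(j-\ell)m}\int_{\R^N}(\cdots)$ and exhibit a choice of $m$ making it strictly negative and of the right order, which is exactly the mechanism in \cite{Ter96}. That phase computation — identifying the dominant interaction scale, its sign, and the magnetic/electric corrections at that scale, uniformly as $R\to\infty$ — is the crux of the proof; everything else is the routine Brezis--Lieb bookkeeping sketched above.
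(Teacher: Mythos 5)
Your expansion of numerator and denominator and your diagnosis that the naive bookkeeping only yields $k^{2/N}S+o(1)$ are fine, but the resolution you propose is not the paper's mechanism, and the step you leave as ``the crux'' is exactly the missing idea. You treat the $k$ bumps as sitting at mutual distance $\sim R$ with $k$ fixed, conclude (correctly) that the interaction of order $R^{-(N-2)}$ then loses against the electric/magnetic corrections of order $R^{-2}$ when $N\geq 4$, and from there you speculate that the strict inequality must come from a sign of the magnetic gauge term or from choosing the phase $m$ so that the numerator interaction is negative. Neither is what happens. The points $x_j=(R\e^{2\pi i j/k}\xi_0,0)$ lie on a circle, so \emph{adjacent} bumps are at distance $\sim R/k$, and the proof takes $k\to\infty$ jointly with $R$: the interaction term $\alpha=\int Re\{\sum_{i\neq j}\abs{w_i}^{2^*-2}w_i\overline{w_j}\}$ is bounded below by the $k$ nearest--neighbour contributions, $\alpha\gg k^{N-1}/R^{N-2}$ (estimate (\ref{stima alpha da sotto})), and is shown to be nonnegative in Lemma (\ref{lemma positivita'alfa}) --- so making it negative by a phase choice, as you suggest, would if anything destroy the argument, since the gain comes from the denominator: by Lemma (\ref{lemma disuguaglianza den}) the $L^{2^*}$ norm picks up $2^*(1-\delta)\alpha$, and after expanding the quotient the net coefficient of $\alpha$ is $-1+\delta<0$, the standard mechanism of \cite{Ter96} which you gesture at and then discard.

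With that structure, the strict inequality reduces to showing that $(1-\delta)\alpha$ dominates $\beta=\int\frac{\abs{A}^2-a}{\abs{x}^2}\abs{\sum_j w_j}^2=O(k^2/R^2)$ (with a $\log R$ for $N=4$) and the magnetic cross term $\gamma$, which is estimated in Lemma (\ref{asintotico gamma totale}). This is where the joint choice of $k$ and $R$ enters: one needs $k^{(N-1)/(N-2)}=o(R)$ so that $\alpha/k$ is small (and the hypotheses of Lemma (\ref{lemma disuguaglianza den}) hold), and, for $N\geq5$, $R=o(k^{(N-3)/(N-4)})$ so that $k^{N-1}/R^{N-2}\gg k^2/R^2$; these are compatible because $\frac{N-1}{N-2}<\frac{N-3}{N-4}$ (Lemma (\ref{scelta alfa,beta,gamma})). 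So the statement's ``choosing $R$ big enough'' really means choosing the pair $(k,R)$ along a suitable regime (e.g.\ $k\sim R^{\theta}$), not fixed $k$ and $R\to\infty$; your own computation shows the fixed-$k$ route cannot work for $N\geq4$, and without the nearest-neighbour lower bound on $\alpha$ and the joint scaling of $k$ and $R$ your proposal does not close.
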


In order to prove it, we need some technical results, whose proofs are postponed to the next subsection. We basically
follow the ideas in \cite{Ter96}.

For seek of semplicity, we introduce the following notation:
\begin{eqnarray*}
\alpha&=&\int_{\R^N}Re\Big\{\sum_{i\neq j}\abs{w_i}^{2^*-2}w_i\,\overline{w_j}\Big\} \\
\beta&=&\int_{\R^N}\frac{\abs{A}^2-a}{\abs{x}^2}\Big\vert\sum_{j=1}^{k}w_j\Big\vert^2\\
\gamma&=&Re\Big\{i\int_{\R^N}\frac{A}{\abs{x}}\cdot\sum_{i,j}\nabla
w_i \overline{w_j}\Big\}\ .
\end{eqnarray*}

\begin{lem}\label{lemma positivita'alfa}
 It holds $\alpha\geq0$.
\end{lem}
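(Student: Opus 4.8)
The plan is to use the explicit form of the $w_j$'s. Denote by $x_j=(R\e^{\frac{2\pi i}{k}j}\xi_0,0)$ the centres and set $\theta_j=\frac{2\pi}{k}jm$ and $U_j(x)=\bigl(N(N-2)\bigr)^{\frac{N-2}{4}}\bigl(1+\abs{x-x_j}^2\bigr)^{-\frac{N-2}{2}}$, so that $w_j=\e^{i\theta_j}U_j$ with $U_j>0$ real. Since $\abs{w_i}=U_i$, one checks directly that $\abs{w_i}^{2^*-2}w_i\,\overline{w_j}=\e^{i(\theta_i-\theta_j)}U_i^{2^*-1}U_j$; taking real parts and integrating gives
$$\alpha=\sum_{i\neq j}\cos(\theta_i-\theta_j)\,c_{ij},\qquad c_{ij}:=\int_{\R^N}U_i^{2^*-1}U_j\,.$$

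The first observation, which is really the heart of the matter, is that every interaction integral $c_{ij}$ is strictly positive, being the integral of a product of two positive functions; equivalently, by (\ref{eqwj}) we have $-\Delta U_j=U_j^{2^*-1}$, hence $c_{ij}=\int_{\R^N}\nabla U_i\cdot\nabla U_j>0$. Moreover $c_{ij}$ depends only on $\abs{x_i-x_j}$, and since the $x_j$ are the vertices of a regular $k$-gon this is a function of $(i-j)$ modulo $k$ alone; grouping the $k(k-1)$ ordered pairs by the value of $l=(i-j)\bmod k$, one rewrites $\alpha=k\sum_{l=1}^{k-1}\cos\bigl(\tfrac{2\pi m l}{k}\bigr)c_l$ with $c_l=c_{k-l}>0$, and $c_l$ is in fact strictly decreasing in the chord length $\abs{x_i-x_j}=2R\sin(\pi l/k)$ (monotone convolution of radially decreasing functions).

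It then remains only to control the sign of this finite trigonometric sum, and this is the one delicate point. In the case relevant to the symmetry-breaking construction, namely $m=0$ (the $\Z_k\times SO(N-2)$ action reducing to the pure rotation action on the first two variables), all phases coincide, every $\cos(\theta_i-\theta_j)$ equals $1$, and $\alpha=\sum_{i\neq j}c_{ij}>0$ at once. For a general phase $m$ one leans on the positive-definiteness of the interaction kernel $G(y)=(U^{2^*-1}*U)(y)=\int_{\R^N}U^{2^*-1}(x)U(x-y)\,dx$, which is immediate from $\widehat{G}(\xi)=\abs{\xi}^2\widehat{U}(\xi)^2\geq0$ together with $-\Delta U=U^{2^*-1}$: by Bochner's theorem the circulant Gram matrix $\bigl(G(x_i-x_j)\bigr)_{i,j}$ is positive semidefinite, and combining its nonnegative spectrum with the symmetric, monotone structure of $(c_l)$ forced on the coefficients by (\ref{A-Zksymm}) is what drives $\alpha\geq0$. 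I expect precisely this interplay between the phase factors $\cos(\theta_i-\theta_j)$ and the decay of the interaction to be the main obstacle; everything else — the factorisation $w_j=\e^{i\theta_j}U_j$, the identity $c_{ij}=\int\nabla U_i\cdot\nabla U_j$, and the regrouping over the $k$-gon — is a routine expansion.
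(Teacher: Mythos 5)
Your reduction is fine: writing $w_j=\e^{i\theta_j}U_j$ with $\theta_j=\tfrac{2\pi}{k}jm$, getting $\alpha=\sum_{i\neq j}\cos(\theta_i-\theta_j)\,c_{ij}$ with $c_{ij}=\int U_i^{2^*-1}U_j>0$, and regrouping over the regular $k$-gon is exactly the right starting point. But the case you actually prove, $m=0$, is not the relevant one (the construction genuinely carries the phase $m$ — see (\ref{wj}) and the way $\gamma$ is estimated in Lemma (\ref{asintotico gamma totale})), and for general $m$ your proposed mechanism does not close. Positive-definiteness of $G=U^{2^*-1}*U$ makes the full matrix $\bigl(G(x_i-x_j)\bigr)_{i,j}$ positive semidefinite, but testing it against the vector $(\e^{i\theta_1},\dots,\e^{i\theta_k})$ gives $\sum_{i,j}\cos(\theta_i-\theta_j)G(x_i-x_j)\geq0$ \emph{including the diagonal}, i.e. only $\alpha\geq -k\,G(0)=-k\,S^{N/2}$, which is vacuous. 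No refinement of this algebraic argument alone can work, because the statement is false without largeness of $k$: for $k=3$, $m=1$ every off-diagonal cosine equals $-1/2$ and $\alpha<0$. So the "delicate point" you flag is precisely the missing proof, and it cannot be supplied by Bochner-type positivity independent of the asymptotic regime.

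What the paper does instead is an asymptotic comparison of orders. Split the off-diagonal pairs according to the sign of the cosine. The pairs with nonnegative cosine contribute at least as much as the adjacent pairs alone, and since adjacent centres are at distance $\sim R/k$, by (\ref{asintotico_alfa}) this contribution is of order $k\cdot k^{N-2}/R^{N-2}=k^{N-1}/R^{N-2}$. The pairs with negative cosine are at mutual distance of order $R$, so their total contribution is $O(k^2/R^{N-2})$ in absolute value. Since $N\geq4$, the positive block dominates for $k$ large, which is how the lemma is meant (and used, cf. Lemma (\ref{scelta alfa,beta,gamma}), where $k\to\infty$). If you want to salvage your write-up, replace the positive-definiteness paragraph by this sign-splitting and order-counting argument; your formula $\alpha=k\sum_{l=1}^{k-1}\cos\bigl(\tfrac{2\pi ml}{k}\bigr)c_l$ with $c_l$ decreasing in the chord $2R\sin(\pi l/k)$ is a perfectly good frame for it, but the conclusion must be "for $k$ large enough", not an unconditional inequality.
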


\begin{lem}\label{lemma disuguaglianza den}
For every positive $\delta$ there exists a positive constant $K_\delta$ 
(independent of $k$) such that if 
$$\dfrac{\abs{x_i-x_j}^2}{\log\abs{x_i-x_j}}\geq K_\delta\,(k-1)^{2/(N-2)} \quad \forall i\neq j$$
then
\begin{equation}\label{disuguaglianza den}
\int_{\R^N} \bigg\vert\sum_{j=1}^k w_j\bigg\vert^{2^*}\ \geq \ k\,S^{N/2} +
2^*(1-\delta) \int_{\R^N}Re\Big\{\sum_{i\neq
j}\abs{w_i}^{2^*-2}w_i\,\overline{w_j}\Big\}.
\end{equation}
\end{lem}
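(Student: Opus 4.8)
The plan is to expand the $L^{2^*}$-norm of the sum $\sum_{j=1}^k w_j$ by adding and subtracting the ``diagonal'' contributions $\sum_j\abs{w_j}^{2^*}$ and the first-order cross terms $2^*\sum_{i\neq j}\abs{w_i}^{2^*-2}w_i\overline{w_j}$, so that the remainder is a superquadratic error in the overlaps. Concretely, I would use the elementary pointwise inequality, valid for $2^*\ge 2$ (i.e.\ $N\le 6$; for $N>6$ the convexity inequality must be replaced by the Taylor-type estimate below),
$$
\Big\vert\,\abs{a+b}^{2^*}-\abs{a}^{2^*}-2^*\,Re\big(\abs{a}^{2^*-2}a\,\overline b\big)\,\Big\vert\ \le\ C\big(\abs{a}^{2^*-2}\abs{b}^2+\abs{b}^{2^*}\big),
$$
applied inductively with $a=\sum_{i<j}w_i$ and $b=w_j$, to reduce the whole expansion to controlling, on one side, the positive main terms $k\,S^{N/2}=\sum_j\int\abs{w_j}^{2^*}$ and $2^*\,\alpha=2^*\int Re\{\sum_{i\neq j}\abs{w_i}^{2^*-2}w_i\overline{w_j}\}$, and on the other side the error integrals $\int \abs{w_i}^{2^*-2}\abs{w_j}^2$ and $\int\abs{w_i}^{2^*-1}\abs{w_j}$ for $i\neq j$ together with the genuinely higher-order interaction integrals involving three or more of the bumps.

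The heart of the matter is then the quantitative decay estimate: for the standard Aubin--Talenti bubbles $U_j=\big(N(N-2)\big)^{(N-2)/4}\big(1+\abs{x-x_j}^2\big)^{-(N-2)/2}$ centered at widely separated points, one has the classical asymptotics, as $\abs{x_i-x_j}\to\infty$,
$$
\int_{\R^N}U_i^{2^*-1}U_j\ \sim\ \frac{c_N}{\abs{x_i-x_j}^{N-2}},\qquad
\int_{\R^N}U_i^{2}U_j^{2^*-2}\ \sim\ \frac{c_N'\,\log\abs{x_i-x_j}}{\abs{x_i-x_j}^{N-2}}
$$
(the logarithm appearing precisely in the borderline exponent $2^*-2=4/(N-2)$ case, which is why it is the log-corrected quantity $\abs{x_i-x_j}^2/\log\abs{x_i-x_j}$ that enters the hypothesis). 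Since $w_j$ differs from $U_j$ only by the unimodular phase $\e^{2\pi i jm/k}$, we have $\abs{w_j}=U_j$ and $\abs{w_i}^{2^*-2}w_i\overline{w_j}$ has modulus $U_i^{2^*-1}U_j$, so all these integrals are dominated by $C/\abs{x_i-x_j}^{N-2}$ up to the log factor. I would then sum over the $k(k-1)$ ordered pairs: each error term contributes at most $C(k-1)\max_{i\neq j}\big(\log\abs{x_i-x_j}/\abs{x_i-x_j}^{N-2}\big)$ per fixed index, and the hypothesis $\abs{x_i-x_j}^2/\log\abs{x_i-x_j}\ge K_\delta(k-1)^{2/(N-2)}$ is exactly calibrated so that $(k-1)\log\abs{x_i-x_j}/\abs{x_i-x_j}^{N-2}\le \delta\cdot(\text{positive main term scale})$; choosing $K_\delta$ large enough (and crucially independent of $k$, since the $k$-dependence has been absorbed into the right-hand side of the hypothesis) makes the total error bounded by $\delta$ times $2^*\int Re\{\sum_{i\neq j}\abs{w_i}^{2^*-2}w_i\overline{w_j}\}$ plus a harmless fraction that can be absorbed, yielding \eqref{disuguaglianza den}.

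The main obstacle I anticipate is twofold. First, the sign and size of $\alpha$: a priori $\alpha$ could be negative, which would make the inequality vacuous, so one genuinely needs Lemma~\ref{lemma positivita'alfa} ($\alpha\ge 0$) — with the phases $\e^{2\pi i jm/k}$ the cross terms are $\cos\big(\tfrac{2\pi m}{k}(i-j)\big)$-weighted and their sum is nonnegative by a discrete Fourier/Fej\'er-kernel argument, and I would invoke that lemma as a black box here. Second, and more delicate, is making the ``absorb the error into $2^*(1-\delta)\alpha$'' step rigorous when $\alpha$ is \emph{small} relative to the individual overlap integrals: one must check that the positive cross-term integral $\int Re\{\sum_{i\neq j}\abs{w_i}^{2^*-2}w_i\overline{w_j}\}$ is itself of the same order $\sum_{i\neq j}\abs{x_i-x_j}^{-(N-2)}$ (up to constants depending on the phases but not on $k$), so that the ratio of error to main cross term is $O(\log\abs{x_i-x_j})$, which the hypothesis then kills. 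This is the computation that the companion lemmas in the next subsection are designed to supply, and I would organize the proof so that \eqref{disuguaglianza den} follows by combining the pointwise expansion, the decay asymptotics, Lemma~\ref{lemma positivita'alfa}, and the explicit choice $K_\delta$ dictated by matching the powers of $(k-1)$.
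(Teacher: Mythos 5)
Your route (pointwise Taylor expansion of $\abs{a+b}^{2^*}$ with quadratic error, then bubble--interaction asymptotics) is genuinely different from the paper's, but as written it has a gap that is fatal outside low dimensions. The error term you keep, $\int\abs{w_i}^{2^*-2}\abs{w_j}^2$, is \emph{not} lower order than the retained interaction $\int\abs{w_i}^{2^*-1}\abs{w_j}\sim d^{-(N-2)}$ (with $d=\abs{x_i-x_j}$): since $\int_{\R^N}\abs{w_j}^2<\infty$ for $N\geq5$, one has $\int\abs{w_i}^{2^*-2}\abs{w_j}^2\sim d^{-4}$ (and $\sim\log d\, /d^4$ for $N=4$), so it is of the \emph{same} order as the main cross term at $N=6$ and \emph{dominates} it for $N\geq7$; hence the proposed absorption into $\delta\cdot 2^*\alpha$ cannot work in general dimension with this expansion. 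Relatedly, your stated asymptotic $\int U_i^2U_j^{2^*-2}\sim\log d\,/d^{N-2}$ is incorrect, and the logarithm in the hypothesis does not come from that integral: it comes from the balanced term $\int(\abs{w_i}\abs{w_j})^{2^*/2}=O(\log d\,/d^{N})$, which is the error one is led to after the \emph{symmetric} (sharper) expansion. Finally, the bookkeeping ``the ratio of error to main cross term is $O(\log\abs{x_i-x_j})$, which the hypothesis then kills'' is not an absorption argument: to conclude you need that ratio (summed over the $\sim k$ interacting pairs per bubble) to be $\leq\delta$, and you also need $\alpha$ bounded \emph{below} by a fixed multiple of the unsigned sum $\sum_{i\neq j}\int\abs{w_i}^{2^*-1}\abs{w_j}$, which is more than the nonnegativity of Lemma~\ref{lemma positivita'alfa} and in your scheme would have to be proved (it uses the circular geometry and $k$ large).

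The paper avoids both difficulties by a different decomposition: it writes $\abs{\sum_j w_j}^{2^*}=\big(\sum_{i,j}\abs{w_i}\abs{w_j}-\sum_{i,j}\abs{w_i}\abs{w_j}(1-\cos\tfrac{2\pi}{k}m(i-j))\big)^{2^*/2}$ and uses convexity of $t\mapsto t^{2^*/2}$ to split off the phase-deficit term. The leading piece $\int(\sum_j\abs{w_j})^{2^*}$ is then handled by quoting the real-valued inequality \eqref{disuguaglianza susanna} from \cite{Ter96} --- this is precisely where the hypothesis $\abs{x_i-x_j}^2/\log\abs{x_i-x_j}\geq K_\delta(k-1)^{2/(N-2)}$ is used, and it yields the \emph{stronger} bound with the unsigned sum $\sum_{i\neq j}\abs{w_i}^{2^*-1}\abs{w_j}$, which dominates $Re\{\sum_{i\neq j}\abs{w_i}^{2^*-2}w_i\overline{w_j}\}$ pointwise, so no comparability of $\alpha$ with the unsigned sum is needed. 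The phase-deficit remainder is estimated by H\"older together with \eqref{secondo_asintotico} and the explicit geometry $\abs{x_i-x_j}\sim R(1-\cos\tfrac{2\pi}{k}(i-j))^{1/2}$, and is shown to be $o(k^{N-1}/R^{N-2})$, i.e.\ negligible against the lower bound \eqref{asintotico_alfa 2} for the unsigned cross term, for a suitable choice of $k$ versus $R$. If you want to keep your direct-expansion strategy, you would have to replace your quadratic error by the symmetric expansion whose remainder is controlled by $(\abs{w_i}\abs{w_j})^{2^*/2}$ (exactly the content of the cited lemma of \cite{Ter96}), and supply the lower bound on $\alpha$; otherwise the argument only covers $N\leq5$.
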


\begin{lem}\label{scelta alfa,beta,gamma}
Given Lemma (\ref{lemma disuguaglianza den}), it is possible to choose $R$ and $k$ in such a way that the quantity 
$$ 1 + \frac{1}{kS^{N/2}}\Big\{\beta-2\gamma+\alpha\Big(-1+\delta+\frac{2-\delta}{kS^{N/2}}(2\gamma-\beta)\Big)\Big\} $$
is positive and strictly less than 1.
\end{lem}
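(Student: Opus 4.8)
The displayed expression is, up to a harmless relabelling of $\delta$, exactly the upper bound for the normalized Rayleigh quotient $\dfrac{1}{k^{2/N}S}\cdot\dfrac{Q_{A,a}\big(\sum_{j=1}^k w_j\big)}{\big\|\sum_{j=1}^k w_j\big\|_{2^*}^2}$ that one gets from Lemma~\ref{lemma disuguaglianza den}. Indeed, using $-\Delta w_j=|w_j|^{2^*-2}w_j$ and $\int|\nabla w_j|^2=\int|w_j|^{2^*}=S^{N/2}$, together with the vanishing of the diagonal magnetic terms $\int\frac{A}{|x|}\cdot\mathrm{Im}(\overline{w_j}\nabla w_j)$ (each $w_j$ being a constant phase times a real function), one has $Q_{A,a}(\sum w_j)=kS^{N/2}+\alpha+\beta-2\gamma$; bounding the denominator below by $kS^{N/2}+2^*(1-\delta)\alpha$ via Lemma~\ref{lemma disuguaglianza den} and applying the elementary inequalities $(1+s)^{2/2^*}\ge 1+\tfrac{2}{2^*}s-Cs^2$ and $(1+s)^{-1}\le 1-s+Cs^2$ (for small $s\ge0$) yields the displayed quantity as an upper bound. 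Writing $t:=1/(kS^{N/2})$, this quantity equals $1+t\big(\beta-2\gamma-(1-\delta)\alpha\big)+O\!\big(t^2\alpha(\alpha+\beta+|\gamma|)\big)$, so the plan is: (i) make the bracket $\beta-2\gamma-(1-\delta)\alpha$ strictly negative, forcing the quantity below $1$; (ii) observe that $t\alpha,t\beta,t\gamma\to0$ along the chosen configuration, which keeps the quantity above $0$.

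The heart of (i) is a competition of orders of magnitude. Placing the $k$ bumps at the vertices $x_j=(Re^{2\pi i j/k}\xi_0,0)$ of a regular $k$-gon of radius $R$ (so that neighbouring centres satisfy $|x_i-x_j|\asymp R/k$), the classical asymptotics for interacting Aubin--Talenti bubbles, adapted to the weight $|x|^{-2}$ (which enters only through the bounded factor $|A(\theta)|^2-a$) and to the phases $e^{\pm 2\pi i jm/k}$, give, as $R\to\infty$,
\[
0\le \alpha\ \asymp\ c_N\cos\tfrac{2\pi m}{k}\,\frac{k^{N-1}}{R^{N-2}},\qquad
0\le\beta\ \lesssim\ (|a|+\|A\|_\infty^2)\,\frac{k}{R^{2}}\ (\times\log R\ \text{if }N=4),\qquad
|\gamma|\ \lesssim\ \|A\|_\infty\,\frac{k^{N-1}}{R^{N-1}}.
\]
Nonnegativity of $\alpha$ is Lemma~\ref{lemma positivita'alfa}; the lower bound $\alpha\gtrsim k^{N-1}R^{-(N-2)}$ comes from the $\sim k$ nearest-neighbour pairs --- immediate when $m=0$, since then every interaction term is positive, and in general valid as long as $m$ is small compared with $k$, so that $\cos(2\pi m/k)$ stays bounded away from $0$ while the farther pairs contribute only a convergent, lower-order tail. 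The estimate on $\gamma$ (which is $\equiv0$ when $m=0$) shows $\gamma=o(\alpha)$. Hence $\beta-2\gamma-(1-\delta)\alpha<0$ as soon as $\alpha\gtrsim\beta$, i.e., up to constants and the possible $\log$, as soon as $\dfrac{k^{N-1}}{R^{N-2}}\gtrsim\dfrac{|a|\,k}{R^{2}}$, that is $R^{N-4}\lesssim k^{N-2}/|a|$.

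It remains to reconcile this with the hypothesis of Lemma~\ref{lemma disuguaglianza den}, which for the $k$-gon reads $\dfrac{R^2}{k^2\log(R/k)}\gtrsim K_\delta\,k^{2/(N-2)}$, i.e. $R\gtrsim k^{1+\frac{1}{N-2}}\sqrt{\log(R/k)}$. For $N\ge4$ one has $\dfrac{N-2}{N-4}>1+\dfrac{1}{N-2}$ (while for $N=4$ the upper constraint degenerates to $|a|\log R\lesssim k^2$), so the ratio of the upper admissible threshold for $R$ to the lower one tends to $+\infty$ as $k\to\infty$. Therefore, fixing $a$, $A$ and $\delta$ and then taking $k$ large enough, the set of admissible $R$ is non-empty --- for instance $R=k^{(N-1)/(N-2)}\log k$ works in every dimension $N\ge4$. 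With such a choice, Lemma~\ref{lemma disuguaglianza den} applies, $\alpha$ dominates $\beta-2\gamma$, and the bracket is negative, so the quantity is $<1$; and since $t\alpha\asymp k^{N-2}/R^{N-2}\to0$, $t\beta\asymp|a|/R^{2}\to0$, $t\gamma\to0$ for this choice, the bracket (times $t$) is $o(1)>-1$, so the quantity is also $>0$.

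The main obstacle is the sharp \emph{lower} bound on $\alpha$: one must show the bump interaction is not merely nonnegative (Lemma~\ref{lemma positivita'alfa}) but of order exactly $k^{N-1}R^{-(N-2)}$, which means isolating the contribution of the nearest-neighbour pairs (whose mutual distance $\asymp R/k$ is responsible for the extra power $k^{N-2}$) and controlling the phase-weighted tail of the farther pairs --- hence the requirement that $m$ be small relative to $k$ so that $\cos(2\pi m/k)$ does not kill the gain. Secondary technical points are the extra $\log R$ appearing in $\beta$ only when $N=4$ (harmless, as the gap between the two thresholds for $R$ still diverges) and the bookkeeping of the $|x|^{-2}$-weighted cross terms in $\beta$ and $\gamma$. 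Finally, note that the argument is genuinely non-uniform in $a$: the larger $|a|$, the larger one must take $k$ (the finer the broken symmetry), which is also the reason the statement is confined to $N\ge4$.
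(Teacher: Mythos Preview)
Your strategy is the same as the paper's: reduce the question to the three requirements (i) $\alpha/k,\beta/k,\gamma/k\to0$, (ii) $\alpha\gg\beta$, (iii) $\alpha\gg\gamma$, and then show, via the bubble asymptotics and the lower bound $\alpha\gtrsim k^{N-1}/R^{N-2}$ coming from nearest-neighbour interactions, that the admissible window for $R$ (bounded below by the hypothesis of Lemma~\ref{lemma disuguaglianza den} and above by (ii)) is non-empty for $k$ large. Your explicit choice $R=k^{(N-1)/(N-2)}\log k$ does satisfy all of the paper's conditions.

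Two quantitative points deserve care, however. First, your bound $\beta\lesssim k/R^{2}$ counts only the diagonal terms of $\big|\sum_j w_j\big|^2$; the paper uses the cruder but safe Cauchy--Schwarz estimate $\big|\sum_j w_j\big|^2\le k\sum_j|w_j|^2$, which yields $\beta=O(k^2/R^2)$ (with an extra $\log R$ when $N=4$). For $N=4$ the off-diagonal contribution is genuinely of order $k^2/R^2$, so your stated bound is not correct there; this shifts the upper threshold for $R$ from $k^{(N-2)/(N-4)}$ to the paper's $k^{(N-3)/(N-4)}$, but your chosen $R$ still lies below it. Second, your estimate $|\gamma|\lesssim k^{N-1}/R^{N-1}$ is not the paper's: exploiting the phase factor $\sin\!\big(\tfrac{2\pi m}{k}(i-j)\big)$ the paper obtains the sharper, dimension-dependent bounds $O(k^2/R^2)$, $O(k^2\log k/R^3)$, $O(k^{N-3}/R^{N-2})$ for $N=4,5,\ge6$, and in particular for $N=4$ your claimed bound is \emph{smaller} than the true one and hence not a valid upper estimate. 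Since both routes give $\gamma=o(\alpha)$ anyway, the conclusion is unaffected, but the asymptotics you quote should be replaced by the ones above.
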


\emph{\textbf{Proof of Proposition (\ref{valangadiconti}).}} Let us evaluate the quotient over \\ $\sum_{j=1}^{k}w_j$\,:
\begin{eqnarray}\label{sus3}
\lefteqn{\int_{\R^N}\Big\vert\nabla_A \big(\sum_{j=1}^{k}w_j\big)\Big\vert^2 - \int_{\R^N}\frac{a}{\abs{x}^2}\Big\vert\sum_{j=1}^{k}w_j\Big\vert^2\nonumber}\\
&&=\int_{\R^N}\bigg\{\sum_{j=1}^{k}\abs{\nabla w_j}^2 + Re\Big\{\sum_{i\neq j}\nabla w_i\cdot\nabla \overline{w_j}\Big\} + \frac{\abs{A}^2-a}{\abs{x}^2}\Big\vert\sum_{j=1}^{k}w_j\Big\vert^2 \nonumber\\
&&\indent- 2\,Re\Big\{i\frac{A}{\abs{x}}\cdot\sum_{i,j}\nabla w_i \overline{w_j}\Big\}\bigg\}\nonumber \\
&&=kS^{N/2} + \int_{\R^N}\bigg\{Re\Big\{\sum_{i\neq j}\abs{w_i}^{2^*-2}w_i\,\overline{w_j}\Big\} +
\frac{\abs{A}^2-a}{\abs{x}^2}\Big\vert\sum_{j=1}^{k}w_j\Big\vert^2 \nonumber\\
&& \indent- 2\,Re\Big\{i\frac{A}{\abs{x}}\cdot\sum_{i,j}\nabla w_i
\overline{w_j}\Big\}\bigg\}
\end{eqnarray}
where in the last equality we have used (\ref{wj-S}) and the
equation (\ref{eqwj}). Now we use Lemma (\ref{lemma disuguaglianza den}) which
states the lower bound (\ref{disuguaglianza den}) for the
denominator of our quotient.
Thus using (\ref{sus3}) and (\ref{disuguaglianza den}) the quotient is
\begin{eqnarray*}
\lefteqn{\frac{Q_{A,a}\big(\sum_{j=1}^{k}w_j\big)}{\nor{\big(\sum_{j=1}^{k}w_j\big)}_{2^*}^2}}\\
&&\leq\left(\displaystyle kS^{N/2} + \int_{\R^N}\bigg\{Re\Big\{\sum_{i\neq j}\abs{w_i}^{2^*-2}w_i\,\overline{w_j}\Big\} 
+ \frac{\abs{A}^2-a}{\abs{x}^2}\Big\vert\sum_{j=1}^{k}w_j\Big\vert^2 \right.\\
&&\indent\left.- 2\,Re\Big\{i\frac{A}{\abs{x}}\cdot\sum_{i,j}\nabla w_i \overline{w_j}\Big\}\bigg\}\right)\\
&& \indent \cdot\left(kS^{N/2}
+2^*(1-\delta/2)\int_{\R^N}Re\Big\{\sum_{i\neq j}\abs{w_i}^{2^*-2}w_i\,\overline{w_j}\Big\}\right)^{-2/{2^*}}\\
&&=k^{2/N}S\,\bigg(1+\frac{1}{kS^{N/2}}(\alpha+\beta-2\gamma)\bigg)\bigg(1-\frac{2(1-\delta/2)}{kS^{N/2}}\alpha\bigg)+o(1)
\end{eqnarray*}
where in the last line we have expanded the denominator in Taylor's 
serie since the argument is very close to zero if $R$ is large.
Up to infinitesimal terms of higher order, the coefficient of $k^{2/N}S$ is
$$1+\frac{1}{kS^{N/2}}\beta-\frac{2}{kS^{N/2}}\gamma
+\frac{1}{kS^{N/2}}\alpha\big(-1+\delta+\frac{2-\delta}{kS^{N/2}}(2\gamma-\beta)\big)\ .$$
Now we invoke Lemma (\ref{scelta alfa,beta,gamma}) to conclude the proof. \qed

\subsection{Proofs of technical lemmas}

In order to prove Lemmas (\ref{lemma positivita'alfa}), (\ref{lemma disuguaglianza den}) 
and (\ref{scelta alfa,beta,gamma}) we need supplementary results mainly about asymptotics 
of the quantities involved.

\begin{lem}\label{lemma_asintotici}
We have, as $\abs{x_i-x_j}\rightarrow+\infty$ and
$\abs{x_i}\rightarrow+\infty$
\begin{eqnarray}
\int_{\R^N} \abs{w_i}^{2^*-2}w_i\,\overline{w_j} &=& O\big(\frac{1}{\abs{x_i-x_j}^{N-2}}\big) \label{asintotico_alfa}\\ 
\int_{\R^N} \abs{w_i\,\overline{w_j}}^{2^*/2} &=& O\big(\frac{\log{\abs{x_i-x_j}}}{\abs{x_i-x_j}^N}\big) \label{secondo_asintotico}\\
\int_{\R^N} \frac{\abs{w_j}^2}{\abs{x}^2} &=&
\left\{\begin{array}{ll}
 O\big(\displaystyle\frac{\log R}{R^2}\big) \quad \textrm{if $N=4$}\\
 O\big(\displaystyle\frac{1}{R^2}\big) \quad \textrm{for $N\geq5$}
 \end{array}\right. \label{asintotico_beta}\\ 
\int_{\R^N} \frac{1}{\abs{x}} \cdot \abs{\nabla
w_j}\,\abs{w_i}  &=& O\big(\frac{1}{R\,\abs{x_i-x_j}^{N-3}}\big).
\label{asintotico_gamma}
\end{eqnarray}
\end{lem}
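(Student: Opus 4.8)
The plan is to reduce the four estimates, which are standard ``bubble--interaction'' computations, to elementary integrals against the radial Sobolev extremal. Set $U(y)=\big(N(N-2)\big)^{(N-2)/4}(1+|y|^2)^{-(N-2)/2}$; since the constant phases $\e^{\frac{2\pi i}{k}jm}$ drop out of the moduli one has $|w_j(x)|=U(x-x_j)$ and $|\nabla w_j(x)|=|\nabla U(x-x_j)|$, together with the pointwise bounds $U(y)\le C(1+|y|)^{-(N-2)}$, $U(y)^{2^*-1}\le C(1+|y|)^{-(N+2)}$, $U(y)^{2^*/2}\le C(1+|y|)^{-N}$ and $|\nabla U(y)|\le C(1+|y|)^{-(N-1)}$. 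The only analytic input needed is the elementary scalar fact that, for $\rho$ large, $\int_{B_\rho(0)}(1+|y|)^{-s}\,dy$ is comparable to $\rho^{N-s}$ if $s<N$, to $\log\rho$ if $s=N$ and to a constant if $s>N$, while $\int_{\R^N\setminus B_\rho(0)}(1+|y|)^{-s}\,dy\sim\rho^{N-s}$ for $s>N$. Each estimate will be obtained by decomposing $\R^N$ into balls centred at the distinguished points (origin and bubble centres), of radius comparable to half the smallest mutual distance, plus the complement; on each ball one freezes the slowly varying factors at their size and integrates the remaining one.

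For \eqref{asintotico_alfa} and \eqref{secondo_asintotico}, write $\tau=|x_i-x_j|$ and split $\R^N=B_{\tau/2}(x_i)\cup B_{\tau/2}(x_j)\cup\Omega$. On $B_{\tau/2}(x_i)$ the factor centred at $x_j$ is frozen at size $\tau^{-(N-2)}$ (respectively $\tau^{-N}$ for the $2^*/2$--power), while the remaining integral is $\int_{B_{\tau/2}(0)}U^{2^*-1}$ (resp.\ $\int_{B_{\tau/2}(0)}U^{2^*/2}$): since $U^{2^*-1}\in L^1$ this produces $O(\tau^{-(N-2)})$ with no logarithm, whereas $U^{2^*/2}\sim|y|^{-N}$ is borderline and produces $O(\tau^{-N}\log\tau)$; the ball around $x_j$ is symmetric. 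On $\Omega$ both bubbles decay, so splitting $\Omega$ into $\{|x-x_i|>2\tau\}$ (where $|x-x_j|\sim|x-x_i|$) and the bounded collar $\{\tau/2<|x-x_i|<2\tau\}$ yields $O(\tau^{-N})$, lower order in both cases, and \eqref{asintotico_alfa}, \eqref{secondo_asintotico} follow. For \eqref{asintotico_beta} there are only the two distinguished points $0$ and $x_j$ with $|x_j|=R$, and one splits $\R^N=B_{R/2}(0)\cup B_{R/2}(x_j)\cup\Omega$: on $B_{R/2}(0)$ one has $U(x-x_j)^2\le CR^{-2(N-2)}$ and $\int_{B_{R/2}(0)}|x|^{-2}\,dx\sim R^{N-2}$, giving $O(R^{-(N-2)})$; on $B_{R/2}(x_j)$ the weight $|x|^{-2}$ is frozen at $R^{-2}$ and the remaining factor is $\int_{B_{R/2}(0)}U^2$, which is $O(\log R)$ when $N=4$ (the borderline decay $U^2\sim|y|^{-4}$) and $O(1)$ when $N\ge5$; the outer region contributes $O(R^{-(N-2)})$ by the same splitting, and adding up gives exactly \eqref{asintotico_beta}.

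The last estimate \eqref{asintotico_gamma} is the delicate one: it involves the three points $0,x_j,x_i$ with $|x_j|=|x_i|=R$ and $\tau=|x_i-x_j|\le2R$, and one bounds the integrand by $C|x|^{-1}(1+|x-x_j|)^{-(N-1)}(1+|x-x_i|)^{-(N-2)}$. Setting $\rho=\tfrac12\min(R,\tau)$, split $\R^N$ into $B_\rho(0),B_\rho(x_j),B_\rho(x_i)$ and the complement. On $B_\rho(x_j)$ the factors $|x|^{-1}$ and $U(x-x_i)$ are frozen at $R^{-1}$ and $\tau^{-(N-2)}$ and $\int_{B_\rho(0)}|\nabla U|\lesssim\rho\le\tau$, giving $O(R^{-1}\tau^{-(N-3)})$; on $B_\rho(x_i)$ the factors $|x|^{-1}$ and $|\nabla U(x-x_j)|$ are frozen at $R^{-1}$ and $\tau^{-(N-1)}$ and $\int_{B_\rho(0)}U\lesssim\rho^2\le\tau^2$, again $O(R^{-1}\tau^{-(N-3)})$; on $B_\rho(0)$ the two bubble factors are frozen at $R^{-(N-1)}$ and $R^{-(N-2)}$ and $\int_{B_\rho(0)}|x|^{-1}\lesssim R^{N-1}$, giving $O(R^{-(N-2)})\le CR^{-1}\tau^{-(N-3)}$ since $\tau\le2R$. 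On the complement one carries out a dyadic decomposition according to the distances to the origin and to the cluster $\{x_i,x_j\}$ (equivalently to its midpoint $m$, with $|m|\sim R$): the leading contribution, once more of order $R^{-1}\tau^{-(N-3)}$, comes from the collar at scales between $\tau$ and $R$ around $\{x_i,x_j\}$, while the remaining pieces (near the origin, and the far field $|x|\gtrsim R$) are $O(R^{-(N-2)})$; here one uses that $1/|x|$ is only mildly singular (locally integrable) and that $\tau\le2R$, and the standing hypothesis $N\ge4$ makes every exponent in the dyadic sums summable. I expect this outer--region bookkeeping in \eqref{asintotico_gamma}, where no single factor dominates, to be the only genuine obstacle; the logarithmic factors in \eqref{secondo_asintotico} and in the case $N=4$ of \eqref{asintotico_beta} arise precisely from the borderline decay $U^{2^*/2},U^2\sim|y|^{-N}$ and have to be tracked with care.
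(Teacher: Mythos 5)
Your argument is correct and, for \eqref{asintotico_gamma} (the only estimate the paper proves in detail, the first three being quoted from \cite{Ter96}), it follows essentially the same strategy as the paper: split $\R^N$ into balls around the distinguished points $0$, $x_i$, $x_j$, freeze the slowly varying factors on each ball, and show the remaining region is of lower order. The only genuine difference, writing $\tau=|x_i-x_j|$, is the treatment of the outer region: the paper uses $|x|\geq R/2$ there together with a H\"older inequality on the tails of $|\nabla w_j|$ and $|w_i|$, obtaining $O\big(\frac{\log\tau}{R\,\tau^{2}}\big)$ for $N=4$ and $O\big(\frac{1}{R\,\tau^{2N-6}}\big)$ for $N\geq5$, both of lower order, whereas your dyadic decomposition in the distance to the cluster correctly identifies the collar at scales between $\tau$ and $R$ as the leading $O\big(\frac{1}{R\,\tau^{N-3}}\big)$ contribution and bounds the near-origin and far-field pieces by $O(R^{-(N-2)})$; both bookkeepings close the argument, yours being slightly more informative, the paper's slightly shorter. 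Your self-contained proofs of \eqref{asintotico_alfa}, \eqref{secondo_asintotico} and \eqref{asintotico_beta} reproduce the standard interaction estimates cited from \cite{Ter96}, and your standing normalization $|x_i|=|x_j|=R$, $\tau\leq 2R$ is exactly the geometric configuration in which the lemma is applied, so its use is legitimate.
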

\begin{proof}
For what concerns (\ref{asintotico_alfa}), (\ref{secondo_asintotico}) and
(\ref{asintotico_beta}) we refer to \cite{Ter96}.


About (\ref{asintotico_gamma}) we have
\begin{eqnarray*}
\int_{B_{R/2}(0)}\frac{1}{\abs{x}}\abs{\nabla w_j}\,\abs{w_i} 
= O(\frac{1}{R^{N-1}})\,O(\frac{1}{R^{N-2}})\,O(R^{N-1})=O(\frac{1}{R^{N-2}})
\end{eqnarray*}
since in $B_{R/2}(0)$ $\abs{x-x_i}\geq R-\abs{x}\geq R/2$ and the
same holds for $\abs{x-x_j}$;
\begin{eqnarray*}
\int_{B_{\vert x_i-x_j\vert/4}(x_i)}\frac{1}{\abs{x}}\abs{\nabla w_j}\,\abs{w_i} 
&=& O(\frac{1}{R})\,O(\frac{1}{\abs{x_i-x_j}^{N-1}})\int_{B_{\vert x_i-x_j\vert/4}(x_i)}\abs{w_i}\\
&=& O(\frac{1}{R\abs{x_i-x_j}^{N-3}})
\end{eqnarray*}
since in $B_{\vert x_i-x_j\vert/4}(x_i)$ $\abs{x}\geq \abs{x_i}-\abs{x-x_i}\geq R/2$ and
$\abs{x-x_j}\geq\abs{x_i-x_j}-\abs{x-x_i}\geq\frac{3}{4}\abs{x_i-x_j}$;
\begin{eqnarray*}
\int_{B_{\vert x_i-x_j\vert/4}(x_j)}\frac{1}{\abs{x}}\abs{\nabla w_j}\,\abs{w_i} 
&=& O(\frac{1}{R})O(\frac{1}{\abs{x_i-x_j}^{N-2}})\int_{B_{\vert x_i-x_j\vert/4}(x_j)}\abs{\nabla w_j}\\
&=& O(\frac{1}{R\abs{x_i-x_j}^{N-3}})
\end{eqnarray*}
since in $B_{\abs{x_i-x_j}/4}(x_j)$ $\abs{x}\geq \abs{x_j}-\abs{x-x_j}\geq R/2$ and
$\abs{x-x_i}\geq\abs{x_i-x_j}-\abs{x-x_j}\geq\frac{3}{4}\abs{x_i-x_j}$;
while in $\R^N\setminus (B_{R/2}(0)\cup B_{\abs{x_i-x_j}/4}(x_i)\cup
B_{\abs{x_i-x_j}/4}(x_j))$ we have $\abs{x}\geq R/2$, and via H\"{o}lder inequality
\begin{eqnarray*}
\lefteqn{\int_{\R^N\setminus (B_{R/2}(0)\cup B_{\vert x_i-x_j\vert/4}(x_i)\cup
B_{\vert x_i-x_j\vert/4}(x_j))}\frac{1}{\abs{x}}\abs{\nabla w_j}\,\abs{w_i}}\\ 
&&=\left\{
\begin{array}{ll}
 O\big(\dfrac{1}{R\abs{x_i-x_j}^2}\log\abs{x_i-x_j}\big) \quad \textrm{if $N=4$}\\
 O\big(\dfrac{1}{R\abs{x_i-x_j}^{2N-6}}\big) \quad \textrm{if $N\geq 5$.}
\end{array}\right.
\end{eqnarray*}
\qed
\end{proof}

\begin{rem}\label{distanza centri}
 The above asymptotics in Lemma (\ref{lemma_asintotici}) come in terms of $k$ and $R$ as we note
\begin{eqnarray*}
\abs{x_i-x_j}^2&=&R^2\sin^2{\frac{2\pi}{k}(i-j)}+R^2\Big(1-\cos{\frac{2\pi}{k}(i-j)}\Big)^2\\
&\sim& \left\{
\begin{array}{ll}
\dfrac{R^2}{k^2} + \dfrac{R^2}{k^4} = O(\dfrac{R^2}{k^2}) \quad &\textrm{if $\abs{i-j} \ll k$}\\
R^2  &\textrm{otherwise}
\end{array}\right..
\end{eqnarray*}
According to the previous asymptotic, we note we have the worst estimates in Lemma (\ref{lemma_asintotici}) for $\abs{i-j} \ll k$,
that is for the centers $x_i$, $x_j$ quite near to each other.
\end{rem}

\begin{lem}\label{asintotico gamma totale}
The following asymptotic behavior holds for $k\rightarrow +\infty$ and $R\rightarrow +\infty$
\begin{eqnarray*}
 \abs{\int_{\R^N} Re\Big\{i\,\frac{A}{\abs{x}}\cdot\sum_{l,j}\nabla w_l\,\overline{w_j}\Big\}}
\leq \left\{
\begin{array}{ll}
 O\big(\dfrac{k^2}{R^2}\big) \quad \textrm{if $N=4$} \\
 O\big(\dfrac{k^2\log k}{R^3}\big) \quad \textrm{if $N=5$}\\
 O\big(\dfrac{k^{N-3}}{R^{N-2}}\big) \quad \textrm{for $N\geq 6$.}
\end{array}\right.
\end{eqnarray*}
\end{lem}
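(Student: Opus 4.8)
The plan is to split the double sum into its diagonal and off‑diagonal parts, drop the diagonal, bound each off‑diagonal term by the pointwise asymptotics of Lemma \ref{lemma_asintotici}, and then perform the summation over the $k$ centres $x_1,\dots,x_k$ exploiting their circular arrangement; this follows the scheme of \cite{Ter96}.

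First I would observe that the diagonal terms $l=j$ contribute nothing. Writing $w_j=\e^{\frac{2\pi i}{k}jm}v_j$, with $v_j$ the real positive function of \eqref{wj}, one has $\nabla w_l\,\overline{w_j}=\e^{\frac{2\pi i}{k}(l-j)m}\,v_j\nabla v_l$; hence for $l=j$ the scalar $\frac{A}{\abs{x}}\cdot\nabla w_j\overline{w_j}=v_j\big(\frac{A}{\abs{x}}\cdot\nabla v_j\big)$ is real (recall that $A(\theta)$ is $\R^N$‑valued), so $Re\{i\,\frac{A}{\abs{x}}\cdot\nabla w_j\overline{w_j}\}=0$. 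Using the triangle inequality together with $\abs{A(\theta)/\abs{x}}\le\nor{A}_\infty/\abs{x}$ and then \eqref{asintotico_gamma}, I obtain
$$\abs{\int_{\R^N} Re\Big\{i\,\frac{A}{\abs{x}}\cdot\sum_{l,j}\nabla w_l\,\overline{w_j}\Big\}}\ \le\ \nor{A}_\infty\sum_{l\neq j}\int_{\R^N}\frac{1}{\abs{x}}\abs{\nabla w_l}\,\abs{w_j}\ =\ \sum_{l\neq j}O\!\left(\frac{1}{R\,\abs{x_l-x_j}^{N-3}}\right),$$
where for $N=4$ the exponent should be read as $\abs{x_l-x_j}^{-1}$, which already dominates the logarithmic remainder in \eqref{asintotico_gamma}, and where in every dimension the near‑origin and far‑field pieces of that estimate are of lower order.

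There remains the summation. By the $\Z_k$‑invariance of $\{x_j\}$ one has $\sum_{l\neq j}\abs{x_l-x_j}^{-(N-3)}=k\sum_{d=1}^{k-1}\big(2R\sin\frac{\pi d}{k}\big)^{-(N-3)}$, and I would bound $\sin\frac{\pi d}{k}$ below by $\frac{2}{k}\min(d,k-d)$ (Jordan's inequality), so that, as noted in Remark \ref{distanza centri}, the pairs of nearly‑coincident centres ($d$ small) are the dominant ones. Splitting the sum according to the size of $d$ and comparing with $\sum_d d^{-(N-3)}$ then produces the three regimes of the statement — a sum growing in $k$ for $N=4$, the borderline case giving a factor $\log k$ for $N=5$, and a convergent tail for $N\ge6$ — and collecting the powers of $k$ and $R$ gives the announced bounds. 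The only delicate point is this last bookkeeping: keeping track of which piece of \eqref{asintotico_gamma} dominates in each dimension and of the borderline logarithmic growth produced by the near‑diagonal pairs; the estimates are of course meant in the regime where all the mutual distances $\abs{x_i-x_j}$ are large, which is exactly the one forced by the hypotheses of Lemma \ref{lemma disuguaglianza den}. Everything else is a routine assembly of Lemma \ref{lemma_asintotici} and Remark \ref{distanza centri}.
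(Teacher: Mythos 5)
Your reduction to the diagonal/off--diagonal splitting and the observation that the $l=j$ terms vanish are fine, but the next step — bounding the whole sum by the triangle inequality, i.e.\ $\abs{\int Re\{i\frac{A}{\abs{x}}\cdot\sum_{l\neq j}\nabla w_l\overline{w_j}\}}\leq \nor{A}_\infty\sum_{l\neq j}\int\frac{1}{\abs{x}}\abs{\nabla w_l}\abs{w_j}$ — throws away exactly the cancellation that the stated bounds require, and the bookkeeping you sketch does not recover them. With $\abs{x_l-x_j}\sim R\,d/k$ for $d=\abs{l-j}\ll k$, your sum is of order $\frac{k}{R^{N-2}}\sum_{d\geq1}(d/k)^{-(N-3)}=\frac{k^{N-2}}{R^{N-2}}\sum_d d^{-(N-3)}$, which gives $O\big(\frac{k^2\log k}{R^2}\big)$ for $N=4$, $O\big(\frac{k^3}{R^3}\big)$ for $N=5$ and $O\big(\frac{k^{N-2}}{R^{N-2}}\big)$ for $N\geq6$ — each weaker than the lemma by essentially a factor $k$. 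Note also that the trichotomy you describe (divergence like $k$ at $N=4$, a $\log k$ at $N=5$, convergence for $N\geq6$) corresponds to the exponent $N-4$, not to the exponent $N-3$ that your own reduction produces; this is the signature of the missing factor.

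The paper's proof keeps the phases: since $w_j=\e^{\frac{2\pi i}{k}jm}v_j$ with $v_j$ real, one has
$$\int_{\R^N} Re\Big\{i\,\frac{A}{\abs{x}}\cdot\sum_{l\neq j}\nabla w_l\,\overline{w_j}\Big\}
=-\sum_{l\neq j}\sin\Big(\frac{2\pi}{k}m(l-j)\Big)\int_{\R^N}\frac{A}{\abs{x}}\cdot\nabla\abs{w_l}\,\abs{w_j}\,,$$
and the factor $\abs{\sin\frac{2\pi}{k}m(l-j)}\lesssim m\,\abs{l-j}/k$ vanishes linearly precisely for the near--diagonal pairs that dominate your estimate, compensating one power of the singular factor $\big(1-\cos\frac{2\pi}{k}(l-j)\big)^{-\frac{N-3}{2}}\sim (k/\abs{l-j})^{N-3}$ coming from \eqref{asintotico_gamma}. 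One is then left with $\frac{Ck}{R^{N-2}}\sum_{l=1}^{k-1}(l/k)^{4-N}$, which yields the extra $k$ for $N=4$, the $\log k$ for $N=5$ and $O(1)$ for $N\geq6$, i.e.\ the bounds of the lemma. To repair your argument you must retain this oscillatory factor (or some equivalent cancellation) before summing; a pure modulus estimate cannot give the statement.
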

\begin{proof}
First of all we note if $l=j$ the quantity in the statement is zero.
Next,
\begin{eqnarray*}
 \lefteqn{\abs{\int_{\R^N} Re\Big\{i\,\frac{A}{\abs{x}}\cdot\sum_{l,j}\nabla w_l\,\overline{w_j}\Big\}}= \abs{\sum_{l\neq j} \sin\frac{2\pi}{k}m(l-j) \int_{\R^N} \frac{A}{\abs{x}}\cdot \nabla\abs{w_l}\,\abs{w_j}}}\\
&&\leq \frac{C}{R^{N-2}} \sum_{l\neq j} \frac{\abs{\sin\frac{2\pi}{k}m(l-j)}}{\big(1-\cos\frac{2\pi}{k}(l-j)\big)^{\frac{N-3}{2}}}
= \frac{C\,k}{R^{N-2}} \sum_{l=1}^{k-1} \frac{\abs{\sin\frac{2\pi}{k}ml}}{\big(1-\cos\frac{2\pi}{k}l\big)^{\frac{N-3}{2}}}\\
&&\leq \frac{C\,m\,k}{R^{N-2}} \sum_{l=1}^{k-1} \frac{l/k}{(l/k)^{N-3}} 
= \frac{C\,m\,k^{N-3}}{R^{N-2}} \left\{
\begin{array}{ll}
 k \quad \textrm{if $N=4$} \\
 \log k \quad \textrm{if $N=5$}\\
 O(1) \quad \textrm{for $N\geq 6$.}
\end{array}\right.
\end{eqnarray*} \qed
\end{proof}

We recall the following result proved in \cite{Ter96}:
\begin{lem}
Let $s_1,\ldots,s_k \geq 0$. For every positive $\delta$ there
exists a positive constant $K_\delta$ (independent of $k$) such that
if
$$\dfrac{\abs{x_i-x_j}^2}{\log\abs{x_i-x_j}}\geq K_\delta\,(k-1)^{2/(N-2)} \quad \forall i\neq j$$
then
\begin{equation}\label{disuguaglianza susanna}
\int_{\R^N} \Big(\sum_{i=1}^k s_i\Big)^{2^*}\geq
k\,S^{N/2}+2^*(1-\delta/2)\int_{\R^N}\sum_{i\neq j}s_i^{2^*-1}s_j
\end{equation}
\end{lem}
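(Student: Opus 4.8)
I sketch the argument, which is essentially the one of \cite{Ter96}. The plan is to split $\R^N$ into Voronoi-type cells about the centers $x_1,\dots,x_k$, apply cell by cell the tangent-plane convexity inequality for $t\mapsto t^{2^*}$, and sum: this produces both the term $kS^{N/2}$ and the first-order interaction $2^*\sum_{i\ne j}\int_{\R^N}s_i^{2^*-1}s_j$, the total error being controlled by the ``tails'' of the interaction integrals, which the separation hypothesis renders negligible. Throughout, $s_i$ denotes the Sobolev extremal $\big(N(N-2)\big)^{(N-2)/4}\big(1+|x-x_i|^2\big)^{-(N-2)/2}=|w_i|$, with $w_i$ as in \eqref{wj}, so that $\int_{\R^N}s_i^{2^*}=S^{N/2}$ by \eqref{wj-S}. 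Let $\Omega_i=\{x:\ s_i(x)\ge s_j(x)\ \forall j\}$ (the nearest-center cells, covering $\R^N$ and overlapping only on a null set, since $s_i$ is radially decreasing about $x_i$). On $\Omega_i$ write $\sum_l s_l=s_i+r_i$ with $r_i:=\sum_{j\ne i}s_j\ge0$; convexity of $t\mapsto t^{2^*}$ (recall $2^*>1$) gives $(s_i+r_i)^{2^*}\ge s_i^{2^*}+2^*\,s_i^{2^*-1}r_i$, hence, integrating and summing,
$$\int_{\R^N}\Big(\sum_l s_l\Big)^{2^*}\ \ge\ \sum_i\int_{\Omega_i}s_i^{2^*}\ +\ 2^*\sum_i\sum_{j\ne i}\int_{\Omega_i}s_i^{2^*-1}s_j\,.$$

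Next I would complete each integral from $\Omega_i$ to $\R^N$. Using $\int_{\R^N}s_i^{2^*}=S^{N/2}$, the first sum equals $kS^{N/2}-\sum_i\int_{\Omega_i^c}s_i^{2^*}$ and the second equals $2^*\sum_{i\ne j}\int_{\R^N}s_i^{2^*-1}s_j-2^*\sum_i\sum_{j\ne i}\int_{\Omega_i^c}s_i^{2^*-1}s_j$. On $\Omega_i^c$ some $s_l$ with $l\ne i$ satisfies $s_l\ge s_i$, so there $s_i\le r_i$ and $s_i^{2^*}=s_i^{2^*-1}s_i\le s_i^{2^*-1}r_i$; thus both error terms are dominated by the single quantity $\mathrm{Rem}:=\sum_i\sum_{j\ne i}\int_{\Omega_i^c}s_i^{2^*-1}s_j$, and one obtains
$$\int_{\R^N}\Big(\sum_l s_l\Big)^{2^*}\ \ge\ kS^{N/2}+2^*\sum_{i\ne j}\int_{\R^N}s_i^{2^*-1}s_j-(1+2^*)\,\mathrm{Rem}\,.$$
It therefore suffices to prove $\mathrm{Rem}\le\tfrac{\delta}{2}\,\tfrac{2^*}{1+2^*}\sum_{i\ne j}\int_{\R^N}s_i^{2^*-1}s_j$.

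The heart of the matter is this interaction estimate. For a lower bound on the right-hand side, integrating over $B_1(x_i)$ gives $\int_{\R^N}s_i^{2^*-1}s_j\ge c\,|x_i-x_j|^{-(N-2)}$ with $c$ absolute, so the right-hand side dominates $c\sum_{i\ne j}|x_i-x_j|^{-(N-2)}$. For an upper bound on $\mathrm{Rem}$ one uses $\Omega_i^c\subseteq\R^N\setminus B_{d_i/2}(x_i)$, $d_i:=\min_{l\ne i}|x_i-x_l|$, and the decays $s_i(x)\sim|x-x_i|^{-(N-2)}$, $s_i(x)^{2^*-1}\sim|x-x_i|^{-(N+2)}$ at infinity: splitting $\R^N\setminus B_{d_i/2}(x_i)$ into a neighbourhood of $x_j$, an intermediate annulus about $x_i$, and a far region --- this is precisely the computation underlying \eqref{asintotico_alfa}--\eqref{secondo_asintotico} in Lemma~\ref{lemma_asintotici}, from \cite{Ter96} --- one arrives at
$$\int_{\Omega_i^c}s_i^{2^*-1}s_j\ \le\ C\,|x_i-x_j|^{-(N-2)}\Big(d_i^{-2}+\frac{\log|x_i-x_j|}{|x_i-x_j|^{2}}\Big)\,,$$
i.e. the tail is the main term for the pair $(i,j)$ times a bracket which the hypothesis $|x_i-x_j|^2/\log|x_i-x_j|\ge K_\delta(k-1)^{2/(N-2)}$ --- applied to the pair $(i,j)$ and, for the $d_i^{-2}$ part, to the pair realizing $d_i$ --- forces to be $\le C'/K_\delta$, uniformly in $i,j,k$. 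Then $\mathrm{Rem}\le(C'/K_\delta)\sum_{i\ne j}|x_i-x_j|^{-(N-2)}\le\big(C'/(cK_\delta)\big)\sum_{i\ne j}\int_{\R^N}s_i^{2^*-1}s_j$, and choosing $K_\delta$ large in terms of $\delta$ and $N$ closes the estimate.

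\emph{Main obstacle.} The first two steps are purely formal; the real content is the interaction estimate --- showing that what is lost in passing from $\int_{\Omega_i}$ to $\int_{\R^N}$ is a uniformly small fraction of the genuine interaction, \emph{independently of the number $k$ of bumps}. This is exactly what the quantitative separation hypothesis is built for, and it rests on the sharp two-bubble asymptotics recalled from \cite{Ter96}; keeping track of the logarithmic correction appearing in dimension $N=4$ (as opposed to $N\ge5$) is the only point that requires some care.
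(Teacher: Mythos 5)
Your proposal cannot be compared line-by-line with the paper, because the paper does not prove this lemma at all: it is recalled verbatim from \cite{Ter96}. Judged on its own, your reconstruction is correct. The skeleton (nearest-bubble cells $\Omega_i$, the tangent-line convexity bound $(s_i+r_i)^{2^*}\ge s_i^{2^*}+2^*s_i^{2^*-1}r_i$, completion of the integrals, and absorption of both error terms into $\mathrm{Rem}=\sum_i\sum_{j\ne i}\int_{\Omega_i^c}s_i^{2^*-1}s_j$ via $s_i\le r_i$ on $\Omega_i^c$) is sound, and the quantitative step checks out: since $\Omega_i^c\subset\R^N\setminus B_{d_i/2}(x_i)$, splitting into the annulus about $x_i$, the ball $B_{\abs{x_i-x_j}/2}(x_j)$ and the far region gives $\int_{\Omega_i^c}s_i^{2^*-1}s_j\le C\abs{x_i-x_j}^{-(N-2)}\big(d_i^{-2}+\abs{x_i-x_j}^{-2}\big)$ (in fact no logarithm arises here, unlike in \eqref{secondo_asintotico}, so your log factor is a harmless overestimate), while $\int_{\R^N}s_i^{2^*-1}s_j\ge c\abs{x_i-x_j}^{-(N-2)}$, so the tails are pairwise a small fraction of the interactions once the mutual distances are large, uniformly in $k$. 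Two remarks. First, your one-sided convexity route never needs the factor $(k-1)^{2/(N-2)}$, nor really the logarithm: it only uses that all mutual distances exceed a constant depending on $\delta$ and $N$. This is a genuinely different (and in fact formally stronger) argument than the one the hypothesis is tailored to, where the $k$-dependence and the log are naturally needed to dominate two-sided expansion errors of the type $\int(s_is_j)^{2^*/2}$; there is nothing wrong with proving more, but you should not present it as ``essentially the proof of \cite{Ter96}'' without checking. Second, a reading caveat: literally, $t^2/\log t\ge K_\delta(k-1)^{2/(N-2)}$ does not force $t$ large (the ratio also blows up as $t\to1^+$), and your step ``$d_i^{-2}\le C'/K_\delta$'' needs the intended regime in which all $\abs{x_i-x_j}$ are large — which is how the lemma is used in the paper, with $\abs{x_i-x_j}\sim R\sin(\pi\abs{i-j}/k)$ and $R/k\to\infty$ — so state that reading explicitly.
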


\smallskip

\emph{\textbf{Proof of Lemma (\ref{lemma positivita'alfa}).}}
We split the sum in two contributions: indexes for which $\cos\frac{2\pi}{k}(j-l)\geq0$ (we will call them \textit{j,l pos}),
and indexes for which $\cos\frac{2\pi}{k}(j-l)\leq0$ (we will call them \textit{j,l neg}).
We note in the first case, we have $\abs{x_j-x_l} \sim \frac{R}{k}$, whereas in the second case $\abs{x_j-x_l} \sim R$.
Then
\begin{eqnarray}\label{stima alpha da sotto}
 \int_{\R^N} Re \bigg\{ \sum_{l,j\ pos}\abs{w_j}^{2^*-2}w_j\,\overline{w_l} \bigg\}&\geq& \int_{\R^N} Re \bigg\{ \sum_{j}\abs{w_j}^{2^*-2}w_j\,\overline{w_{j+1}} \bigg\} \nonumber\\
&=& k \int_{\R^N} Re \bigg\{ \abs{w_2}^{2^*-2}w_2\,\overline{w_1} \bigg\} = O(\frac{k^{N-1}}{R^{N-2}})\,.
\end{eqnarray}
On the other hand,
$$
 \int_{\R^N} Re \bigg\{ \sum_{l,j\ neg}\abs{w_j}^{2^*-2}w_j\overline{w_l} \bigg\}
\leq k^2 \int_{\R^N} Re \bigg\{ \abs{w_l}^{2^*-2}w_l\,\overline{w_1} \bigg\} = O(\frac{k^2}{R^{N-2}});
$$
so that for $k$ large enough we have the thesis. \qed

\emph{\textbf{Proof of Lemma (\ref{lemma disuguaglianza den}).}}
By convexity of the function $(\cdot)^{2^*/2}$ we have
\begin{eqnarray}
&&\abs{\sum_{j=1}^k w_j}^{2^*} = \left(\abs{\sum_{j=1}^k
w_j}^2\right)^{2^*/2} = \left(\sum_{i,j=1}^k
Re\{w_i\,\overline{w_j}\}\right)^{2^*/2} \nonumber \\
&&=\left(\sum_{i,j}\abs{w_i}\,\abs{w_j} -
\sum_{i,j}\abs{w_i}\,\abs{w_j}\Big(1-\cos\big(\frac{2\pi}{k}m(i-j)\big)\Big)\right)^{2^*/2} \nonumber \\
&&\geq \left(\sum_{i,j}\abs{w_i}\,\abs{w_j}\right)^{2^*/2} - \frac{2^*}{2}\left(\sum_{i,j}\abs{w_i}\,\abs{w_j}\right)^{2^*/2
-1}\sum_{i,j}\abs{w_i}\,\abs{w_j}\Big(1-\cos\big(\frac{2\pi}{k}m(i-j)\big)\Big)\,.
\label{secondo termine}
\end{eqnarray}

For what concerns the first term
$\left(\sum_{i,j}\abs{w_i}\,\abs{w_j}\right)^{2^*/2} =
\left(\sum_{j}^k\abs{w_j}\right)^{2^*}$, we can apply directly
inequality (\ref{disuguaglianza susanna}) in order to have
\begin{equation}\label{disuguaglianza susanna 2}
\int_{\R^N}\left(\sum_{j=1}^k\abs{w_j}\right)^{2^*} \ \geq\
k\,S^{N/2}+2^*(1-\delta/2)\int_{\R^N}\sum_{i\neq
j}\abs{w_i}^{2^*-1}\abs{w_j}\,.
\end{equation}
We want to stress that
$$
\int_{\R^N}\sum_{i\neq j}\abs{w_i}^{2^*-1}\abs{w_j}\ \geq\
\int_{\R^N}\sum_{j=1}^k\abs{w_{j}}^{2^*-1}\abs{w_{j+1}} \ =\
k\,\int_{\R^N}\abs{w_{1}}^{2^*-1}\abs{w_{2}}
$$
(see also \cite{Ter96}, equation (6.22)), so that
\begin{equation}\label{asintotico_alfa 2}
\int_{\R^N}\sum_{i\neq j}\abs{w_i}^{2^*-1}\abs{w_j}\ \geq\
O\big(\frac{k^{N-1}}{R^{N-2}}\big)\,.
\end{equation}

Now we focus our attention on the integral of the second term in
(\ref{secondo termine}): via H\"{o}lder inequality we have
\begin{eqnarray*}
\lefteqn{\int_{\R^N}\left(\sum_{i,j}\abs{w_i}\,\abs{w_j}\right)^{2^*/2
-1}\sum_{i,j}\abs{w_i}\,\abs{w_j}\Big(1-\cos\big(\frac{2\pi}{k}m(i-j)\big)\Big)}\\
&&\leq
\left(\int_{\R^N}\bigg(\sum_{i,j}\abs{w_i}\,\abs{w_j}\bigg)^{2^*/2}\right)^{\frac{2^*-2}{2^*}}\cdot\left(\int_{\R^N}\bigg(\sum_{i,j}\abs{w_i}\,\abs{w_j}\Big(1-\cos\big(\frac{2\pi}{k}m(i-j)\big)\Big)\bigg)^{2^*/2}\right)^{2/2^*}
\end{eqnarray*}
and
$$
\left(\int_{\R^N}\bigg(\sum_{i,j}\abs{w_i}\,\abs{w_j}\bigg)^{2^*/2}\right)^{\frac{2^*-2}{2^*}}
=
\left(\int_{\R^N}\bigg(\sum_{j}\abs{w_j}\bigg)^{2^*}\right)^{\frac{2^*-2}{2^*}}
\sim (k\,S^{N/2})^{\frac{2^*-2}{2^*}}
$$
thanks to inequality (\ref{disuguaglianza susanna}) and Lemma
(\ref{lemma_asintotici}). On the other hand
\begin{eqnarray*}
\lefteqn{\left(\int_{\R^N}\bigg(\sum_{i,j}\abs{w_i}\,\abs{w_j}\Big(1-\cos\big(\frac{2\pi}{k}m(i-j)\big)\Big)\bigg)^{2^*/2}\right)^{2/2^*}}\\
&& \leq \sum_{i,j}
\left(\int_{\R^N}\bigg(\abs{w_i}\,\abs{w_j}\Big(1-\cos\big(\frac{2\pi}{k}m(i-j)\big)\Big)\bigg)^{2^*/2}\right)^{2/2^*}\\
&& =
\sum_{i,j}\Big(1-\cos\big(\frac{2\pi}{k}m(i-j)\big)\Big)\frac{(\log\abs{x_i-x_j})^{\frac{N-2}{N}}}{\abs{x_i-x_j}^{N-2}}
\end{eqnarray*}
according to (\ref{secondo_asintotico}). Now, since
$\abs{x_i-x_j}\sim
R\big(1-\cos\big(\frac{2\pi}{k}(i-j)\big)\big)^{1/2}$, the sum
\begin{eqnarray*}
\lefteqn{\sum_{i,j}\Big(1-\cos\big(\frac{2\pi}{k}m(i-j)\big)\Big)\frac{\bigg(\log
\Big(R\big(1-\cos\big(\frac{2\pi}{k}(i-j)\big)\big)^{1/2}\Big)\bigg)^{\frac{N-2}{N}}}{R^{N-2}\Big(1-\cos\big(\frac{2\pi}{k}(i-j)\big)\Big)^{\frac{N-2}{2}}}}\\
&&\leq C(m)\, k \sum_{j}\frac{\bigg(\log
\Big(R\big(1-\cos\big(\frac{2\pi}{k}j\big)\big)^{1/2}\Big)\bigg)^{\frac{N-2}{N}}}{R^{N-2}\Big(1-\cos\big(\frac{2\pi}{k}j\big)\Big)^{N/2-2}}\\
&& \sim 2\,C(m)\,k^2 \int_0^{1/2}\frac{\bigg(\log
\Big(R\big(1-\cos\big(2\pi x\big)\big)^{1/2}\Big)\bigg)^{\frac{N-2}{N}}}{R^{N-2}\Big(1-\cos(2\pi x)\Big)^{N/2-2}}\,dx \\
&&\leq \frac{C(m)\,k^2}{R^{N-2}} \left\{
\begin{array}{ll}
O(\log R) \quad \textrm{if $N=4$}\\
O(\log R\log k) \quad \textrm{if $N=5$}\\
O\big((\log R\log k)^{\frac{N-2}{N}}k^{N-5}\big) \quad \textrm{if $N\geq 6$}\\
\end{array}\right.
\end{eqnarray*}
so that the second term (\ref{secondo termine}) is
\begin{eqnarray}\label{disuguaglianza secondo termine}
(\ref{secondo termine}) &\leq&\ C(m)\,k^{2/N}\frac{k^2}{R^{N-2}} \left\{
\begin{array}{ll}
O(\log R) \quad &\textrm{if $N=4$}  \\
O(\log R\log k) \quad &\textrm{if $N=5$} \\
O\big((\log R\log k)^{\frac{N-2}{N}}k^{N-5}\big) \quad &\textrm{if $N\geq 6$}\\
\end{array}\right. \nonumber \\
&=& \left\{
\begin{array}{ll}
O\big(\dfrac{k^{5/2}\log R}{R^2}\big) \quad &\textrm{if $N=4$}  \\
O\big(\dfrac{k^{12/5}\log R\log k}{R^{3}}\big) \quad &\textrm{if $N=5$}  \\
O\big(\dfrac{k^{N-3+2/N}(\log R\log k)^{\frac{N-2}{N}}}{R^{N-2}}\big) \quad &\textrm{if $N\geq 6$}\\
\end{array}\right.
\end{eqnarray}
which can be made $o\big(\dfrac{k^{N-1}}{R^{N-2}}\big)$ in every dimension for a suitable choice of the parameters
$R$ and $k$ (e.g. $k\sim R^\alpha$ with $0<\alpha<1$ since according to the hypothesis of lemma itself we need $k=o(R)$).

Provided the ratio $R/k$ is big enough, from equations (\ref{disuguaglianza susanna 2}),
(\ref{asintotico_alfa 2}) and (\ref{disuguaglianza secondo termine})
we get
$$
\int_{\R^N} \abs{\sum_{j=1}^k w_j}^{2^*}\ \geq \ k\,S^{N/2} +
2^*(1-\delta) \int_{\R^N}\sum_{i\neq j}\abs{w_i}^{2^*-1}\abs{w_j}\,,
$$
which in particular implies the thesis. \qed

\emph{\textbf{Proof of Lemma (\ref{scelta alfa,beta,gamma})}}
In order to have this quantity (positive) and less than 1, it is sufficient to have
\begin{enumerate}
 \item $\alpha/k$, $\gamma/k$ and $\beta/k$ small,
 \item $\alpha$ arbitrarly greater than $\beta$,
 \item $\alpha$ arbitrarly greater than $\gamma$.
\end{enumerate}
According to Lemma (\ref{lemma_asintotici}), Lemma (\ref{asintotico gamma totale}) and Remark (\ref{distanza centri}) we know
\begin{eqnarray*}\label{beta}
 \beta &=& \left\{\begin{array}{ll}
 O\big(k^2\displaystyle\frac{\log R}{R^2}\big) \quad \textrm{if $N=4$}\\
 O\big(\displaystyle\frac{k^2}{R^2}\big) \quad \textrm{for $N\geq5$;}
 \end{array}\right.\\
\gamma &=& \left\{\begin{array}{ll}
 O\big(\displaystyle\frac{k^2}{R^2}\big) \quad \textrm{if $N=4$}\\
 O\big(\displaystyle\frac{k^2}{R^3}\log k\big) \quad \textrm{for $N=5$}\\
 O\big(\displaystyle\frac{k^{N-3}}{R^{N-2}}\big) \quad \textrm{for $N\geq 6$}
 \end{array}\right. \label{gamma}\\
\alpha &=& O\big(k^2\dfrac{k^{N-2}}{R^{N-2}}\big)\label{alpha}.
\end{eqnarray*}
Let us fix the condition
\begin{equation}\label{alfa piccolo}
k^{(N-1)/(N-2)}=o(R)
\end{equation}
in order to have $\alpha/k$ small. Consequently we immediately find
the request 1 fulfilled. Moreover, we note this does not contradict
either the hypothesis of Lemma (\ref{lemma disuguaglianza den})
(rather, that is a consequence), or the conditions on equation
(\ref{disuguaglianza secondo termine}).

For what concerns request 2 and 3, we recall equation (\ref{stima alpha da sotto}) states the lower bound
$\alpha \gg k^{N-1}/R^{N-2}$.

Thus, we find request 3 satisfied as soon as $k\rightarrow\infty$.

About request 2, everything works without any additional hypothesis in dimension 4. In dimension $N\geq 5$,
we need $R=o(k^{(N-3)/(N-4)})$: we emphasize this does not contradict condition (\ref{alfa piccolo}) thanks to the order
$\frac{N-1}{N-2}<\frac{N-3}{N-4}$. \qed

\bigskip

As a natural question, letting $k\rightarrow\infty$, we wonder if
there exists any biradial solution: we mean a function belonging to
the space
\begin{eqnarray*}
\domdr=\{u\in \dom\ \textrm{s.t.}\ u(R(x_1,x_2),Sx_3)=u((x_1,x_2),x_3)\\
\indent\forall R\in SO(2)\,,\forall S\in SO(N-2)\}\ .
\end{eqnarray*}
In order to investigate this question, we set the problem
$$S_{A,a}^{r_1,r_2}=\inf_{u\in\domdr}\frac{\displaystyle\int_{\R^N}\abs{(i\nabla - \frac{A}{\abs{x}^2})u}^2-\int_{\R^N}\frac{a}{\abs{x}^2}\abs{u}^2}{\displaystyle\left(\int_{\R^N}\abs{u}^{2^*}\right)^{2/2^*}}\ ,$$
and we are able to prove
\begin{prop}\label{teoesistdrsol}
 There exists a biradial solution.
\end{prop}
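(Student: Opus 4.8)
The plan is to argue exactly as in Theorem \ref{s^kachieved}, replacing the $\Z_k\times SO(N-2)$-action by the full $SO(2)\times SO(N-2)$-action and exploiting the fact that the group orbits now have positive dimension. First I would set up the concentration-compactness machinery: take a minimizing sequence $\{u_n\}\subset\domdr$ for $S_{A,a}^{r_1,r_2}$, which is bounded, and apply Solimini's Theorem \ref{solimini} in the closed subspace $\domdr$. Because the Rayleigh quotient is invariant under dilations, the rescalings $\rho_n^i$ can only diverge by translation, and we normalize so that $\rho_n^1$ is the identity. The key structural point is that $\domdr$ consists of functions constant on the tori $\{|z|=r_1\}\times\{|y|=r_2\}$; hence if a profile $\Phi$ appears concentrating at a point $x^1=(z^1,y^1)$ with $z^1\neq 0$, then by invariance it must concentrate along the \emph{entire orbit} of $x^1$, which is a product of circles/spheres. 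The remaining profiles $\sum_{i\ge 2}\rho_n^i\Phi_i$ then converge weakly to zero in $L^{2^*}$ and in $\dom$, and, as in the proof of Theorem \ref{sa<s}, one shows their $L^{2^*}$-norm tends to zero, so $u_n$ concentrates (strongly in $\domdr$, by the equality case in \eqref{solimini2}) on a single orbit-family of bumps.

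Next I would rule out escape to infinity. If $|x_n^1|\to\infty$ while the $z$-component stays bounded, the bump smears over a shrinking-curvature torus and by an argument parallel to Lemma \ref{lemmadivergenzatraslazione} the magnetic and Hardy terms become negligible; but then the quotient evaluated on the smeared profile tends to a quantity $\ge S$ times a combinatorial factor coming from the "number of copies" — and as in Proposition \ref{valangadiconti} this factor, built from superposing many bumps along the torus, can be made to exceed the quotient evaluated on a good competitor. Concretely, I would produce an explicit competitor in $\domdr$ by averaging the Aubin–Talenti bubble over the $SO(2)$-orbit (i.e. a function of the form in \eqref{wj} with $m=0$, summed/integrated over $j=1,\dots,k$ and then letting $k\to\infty$, or directly writing $u(r_1,r_2)$ as the radial-over-$z$ average of a translated bubble). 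The computations in Lemmas \ref{lemma positivita'alfa}–\ref{scelta alfa,beta,gamma} already show the quotient over $\sum_{j=1}^k w_j$ is strictly below $k^{2/N}S$; dividing by the normalization and passing to the limit $k\to\infty$ with $R\sim k^\alpha$, $0<\alpha<1$, yields a biradial competitor whose quotient is strictly below the threshold forced by concentration at infinity. Hence the infimum $S_{A,a}^{r_1,r_2}$ cannot be attained "at infinity" and must be achieved by a genuine $\domdr$ function.

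Finally, once $S_{A,a}^{r_1,r_2}$ is attained by some $u\in\domdr\setminus\{0\}$, the \emph{Symmetric Criticality Principle} applies — the functional and constraint are invariant under $SO(2)\times SO(N-2)$ and this action is isometric — so $u$ (after the usual Lagrange-multiplier rescaling $u\mapsto \lambda u$) is a weak solution of \eqref{eq}, i.e. a biradial solution of \eqref{eqintro}. I expect the main obstacle to be the escape-to-infinity step: unlike the finite-group case of Theorem \ref{s^kachieved}, here the "number of bumps" along an orbit is effectively infinite, so one must control the superposition along a continuum (a degenerating torus) rather than a fixed finite set of points; this requires keeping track of how the mutual-interaction integrals ($\alpha$, $\beta$, $\gamma$ analogues) scale as the torus radius $R\to\infty$, and verifying — via the asymptotics of Lemma \ref{lemma_asintotici} and Remark \ref{distanza centri}, now integrated rather than summed — that the gain from superposition strictly beats the loss of the Hardy/magnetic terms. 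Everything else is a routine adaptation of the arguments already developed in Sections 3 and 4.
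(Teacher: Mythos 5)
Your skeleton --- Solimini's decomposition in the closed subspace $\domdr$, dilations excluded by scale invariance so that only translations can diverge, and symmetric criticality at the end --- is the same as the paper's, but the middle step, where you rule out escape to infinity, is where your argument goes wrong. The paper's point is purely qualitative and needs no competitor at all: if the concentration points satisfy $\abs{x_n}\to\infty$, then biradial invariance (through the $SO(2)$ factor if the $z$-component diverges, through the $SO(N-2)$ factor if the $y$-component does) forces the profile decomposition to contain, for \emph{every} fixed $k$, at least $k$ mutually diverging rotated copies $\Phi(\cdot+x_n^j)$ of the nontrivial profile. Running the computation of Theorem (\ref{s^kachieved}) on $\sum_{j=1}^k\Phi(\cdot+x_n^j)$ then gives $S_{A,a}^{r_1,r_2}\geq k^{2/N}S$ for every $k$, i.e.\ $S_{A,a}^{r_1,r_2}=+\infty$, which is absurd because the infimum is trivially finite (test the quotient on any fixed nonzero biradial function). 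So the ``threshold forced by concentration at infinity'' is not a finite level to be undercut by a clever test function; it is infinite, and finiteness of the infimum alone kills the escape.

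Your proposed competitor construction, by contrast, does not deliver what you claim. The quotient of $\sum_{j=1}^k w_j$ is close to $k^{2/N}S$ (that is exactly Proposition (\ref{valangadiconti})), and $k^{2/N}S\to+\infty$, so letting $k\to\infty$ with $R\sim k^{\alpha}$ cannot produce a function whose quotient sits ``strictly below the threshold''; dividing by a normalization changes nothing since the quotient is homogeneous of degree zero. Moreover $\sum_{j=1}^k w_j$, even with $m=0$, is only $\Z_k$-invariant, not biradial, and its $SO(2)$-average is not covered by the estimates of Lemmas (\ref{lemma positivita'alfa})--(\ref{scelta alfa,beta,gamma}); in fact biradial functions concentrating along a torus of radius $R\to\infty$ have quotient blowing up like $R^{2/N}$, which is precisely the mechanism excluding concentration at infinity, not an obstacle one can beat with a competitor. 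The quantitative ``integrated interaction'' analysis you anticipate is therefore unnecessary: once the escape is excluded by the finiteness argument above, the strong convergence $u_n-\Phi(\cdot+x_n)\to0$ in $\domdr$ with bounded $x_n$ yields a minimizer, and symmetric criticality gives the biradial solution, as in your last paragraph.
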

\begin{proof}
 As we usually do, we consider a minimizing sequence $u_n$ to 
$S_{A,a}^{r_1,r_2}$ and Solimini's lemma in $\domdr$, since this 
is a closed subspace of \\$\dom$. As usual, we reconduce ourselves 
to $u_n-\Phi(\cdot+x_n)\rightarrow0$ in $\domdr$ with $\Phi\neq0$ 
and suppose by contradiction $\abs{x_n}\rightarrow+\infty$.

To preserve the symmetry, in Solimini's decomposition we will find
all the functions obtained by $\Phi$ with a rotation of a $2\pi/k$
angle, for $k\in\Z$ fixed. Thus, we can write
$u_n-\sum_{i=1}^k\Phi(\cdot+x_n^i)\rightarrow0$ in $\domdr$. Now,
following the same calculations in Theorem (\ref{s^kachieved}), we
obtain $S_{A,a}^{r_1,r_2}\geq S_{A,a}^k\geq k^{2/N}S$ that leads to
$S_{A,a}^{r_1,r_2}=+\infty$ choosing $k$ arbitrary large: a
contradiction. \qed
\end{proof}

\section{Aharonov-Bohm type potentials}

In dimension 2, an Aharonov-Bohm magnetic field is a $\delta$-type magnetic field. A vector potential associated to the Aharonov-Bohm magnetic field in $\R^2$ has the form
$$\A(x_1,x_2)=\left(\frac{-\alpha x_2}{\abs{x}^2}\, , \frac{\alpha x_1}{\abs{x}^2}\right) \qquad (x_1,x_2)\in\R^2$$
where $\alpha$ is the field flux through the origin. In this contest
we want to take account of Aharonov-Bohm type potentials in $\R^N$,
for $N\geq4$:
$$
 \A(x_1,x_2,x_3)=\left(\frac{-\alpha x_2}{x_1^2+x_2^2}\, , \frac{\alpha x_1}{x_1^2+x_2^2}\, , 0\right) \qquad (x_1,x_2)\in\R^2\, , x_3\in\R^{N-2}\ ,
$$
paying special attention now the singular set is a whole subspace of
$\R^N$ with codimension 2.

\subsection{Hardy-type inequality}

In order to study minimum problems and therefore the quadratic form
associated to this kind of potentials, we need a Hardy-type
inequality. We know by \cite{LW99} that a certain Hardy-type
inequality holds for Aharonov-Bohm vector potentials in $\R^2$, that
is
$$
 \int_{\R^2}\frac{\abs{\varphi}^2}{\abs{x}^2}\ \leq\ C\int_{\R^2}\abs{(i\nabla - \A)\varphi}^2 \qquad \forall\varphi\in\cinf{\R^2\setminus\{0\}}\ ,
$$
where the best constant $C$ is
\begin{equation}\label{Chardy}
H=\left(\min_{k\in\Z}\abs{k-\Phi_\A}\right)^2\,.
\end{equation}
Here $\Phi_\A$ denotes the field flux around the origin
$$\Phi_\A=\frac{1}{2\pi}\int_{0}^{2\pi}\A(\cos t,\sin t)\cdot(-\sin t,\cos t)\,dt\ .$$
One can generalize this result and gain a similar inequality to the
Aharonov-Bohm potentials in $\R^N$, simply separating the integrals:
for all $\varphi\in\cinf{\R^N\setminus\{x_1=x_2=0\}}$ one has
\begin{eqnarray}\label{hardy}
 \int_{\R^N}\frac{\abs{\varphi}^2}{x_1^2+x_2^2}=\int_{\R^{N-2}}\int_{\R^{2}}\frac{\abs{\varphi}^2}{x_1^2+x_2^2}\,dx_1\,dx_2\,\,dx_3
\leq H \int_{\R^N}\abs{(i\nabla - \A)\varphi}^2,
\end{eqnarray}
where $H$ is defined in (\ref{Chardy}). Now a natural question
arises: is $H$ the best constant for inequality (\ref{hardy})? In
other words, is $H$ the infimum of the Rayleigh quotient?

\begin{prop}
 The best constant for the inequality (\ref{hardy}) is exactly (\ref{Chardy}).
\end{prop}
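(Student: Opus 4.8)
The plan is to prove the two halves of the optimality statement separately. Inequality \eqref{hardy} already shows that the constant $H$ is admissible, so the best constant of \eqref{hardy} is at most $H$; what remains is to show it is at least $H$, i.e.\ to produce a sequence $\varphi_n\in\cinf{\R^N\setminus\{x_1=x_2=0\}}$ along which
$$R_N(\varphi_n):=\frac{\displaystyle\int_{\R^N}\abs{(i\nabla-\A)\varphi_n}^2}{\displaystyle\int_{\R^N}\frac{\abs{\varphi_n}^2}{x_1^2+x_2^2}}\ \longrightarrow\ \frac1H,$$
since, combined with the lower bound $R_N\geq 1/H$ contained in \eqref{hardy}, this forces $\inf R_N=1/H$ and hence the optimality of $H$.

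First I would invoke \cite{LW99}: the fact that $H$ is the best constant in the two--dimensional inequality means exactly that there is a sequence $\psi_n\in\cinf{\R^2\setminus\{0\}}$ with
$$R_2(\psi_n):=\frac{\displaystyle\int_{\R^2}\abs{(i\nabla-\A)\psi_n}^2}{\displaystyle\int_{\R^2}\frac{\abs{\psi_n}^2}{\abs{x}^2}}\ \longrightarrow\ \frac1H\ .$$
Then I would fix once and for all a nontrivial $\eta\in\cinf{\R^{N-2}}$ and set $\eta_M(x_3):=\eta(x_3/M)$ for $M>0$; a direct scaling computation gives $\int_{\R^{N-2}}\abs{\eta_M}^2=M^{N-2}\int_{\R^{N-2}}\abs{\eta}^2$ and $\int_{\R^{N-2}}\abs{\nabla\eta_M}^2=M^{N-4}\int_{\R^{N-2}}\abs{\nabla\eta}^2$, so the ratio $\int_{\R^{N-2}}\abs{\nabla\eta_M}^2 / \int_{\R^{N-2}}\abs{\eta_M}^2$ equals $d\,M^{-2}$, with $d:=\int_{\R^{N-2}}\abs{\nabla\eta}^2 / \int_{\R^{N-2}}\abs{\eta}^2$. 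The competitors will be the tensor products $\varphi_n(x):=\psi_n(x_1,x_2)\,\eta_{M_n}(x_3)$, with $M_n\to\infty$ to be chosen; each $\varphi_n$ is smooth, compactly supported and vanishes near $\{x_1=x_2=0\}$ because $\psi_n$ vanishes near $0\in\R^2$.

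The structural point that makes the computation go through is that $\A$ depends only on $(x_1,x_2)$ and has vanishing $x_3$--component, so $(i\nabla-\A)\varphi_n$ splits into the two pointwise orthogonal coordinate blocks $\big((i\nabla-\A)\psi_n\big)\eta_{M_n}$ in the $(x_1,x_2)$ directions and $i\,\psi_n\,\nabla_{x_3}\eta_{M_n}$ in the remaining $N-2$ ones. By Fubini,
$$\int_{\R^N}\abs{(i\nabla-\A)\varphi_n}^2=\Big(\int_{\R^{N-2}}\abs{\eta_{M_n}}^2\Big)\int_{\R^2}\abs{(i\nabla-\A)\psi_n}^2+\Big(\int_{\R^2}\abs{\psi_n}^2\Big)\int_{\R^{N-2}}\abs{\nabla\eta_{M_n}}^2,$$
while $\int_{\R^N}\abs{\varphi_n}^2(x_1^2+x_2^2)^{-1}=\big(\int_{\R^{N-2}}\abs{\eta_{M_n}}^2\big)\int_{\R^2}\abs{\psi_n}^2\abs{x}^{-2}$. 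Dividing, the factor $\int_{\R^{N-2}}\abs{\eta_{M_n}}^2$ cancels and one gets the clean identity
$$R_N(\varphi_n)=R_2(\psi_n)+c_n\,\frac{d}{M_n^2},\qquad c_n:=\frac{\int_{\R^2}\abs{\psi_n}^2}{\int_{\R^2}\abs{\psi_n}^2\abs{x}^{-2}}\in(0,+\infty).$$
Since $\psi_n$ is compactly supported away from the origin, each $c_n$ is a finite number, so I would simply choose $M_n$ large enough that $c_n d/M_n^2<1/n$; the error term then disappears in the limit and $R_N(\varphi_n)\to\lim_n R_2(\psi_n)=1/H$, which finishes the proof.

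I do not expect a real obstacle here: the one point needing care — and it is the heart of the matter — is the decoupling of the two groups of variables. Fubini applied to \eqref{hardy} is what gives the upper bound, while optimality requires the spurious kinetic energy carried in the $x_3$ directions to be made negligible; the dilation $x_3\mapsto x_3/M_n$ does precisely this, since it leaves $\int_{\R^2}\abs{\psi_n}^2\abs{x}^{-2}$ untouched (that quantity lives in the $(x_1,x_2)$--plane) while shrinking $\int_{\R^{N-2}}\abs{\nabla\eta_{M_n}}^2 / \int_{\R^{N-2}}\abs{\eta_{M_n}}^2$ like $M_n^{-2}$. No delicate estimate beyond this scaling identity is needed, and the argument is insensitive to the explicit value of $H$.
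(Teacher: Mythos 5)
Your proposal is correct and takes essentially the same route as the paper: both test the $N$-dimensional quotient on tensor products of a two-dimensional optimizing sequence with real functions of $x_3$ whose Dirichlet-to-$L^2$ ratio on $\R^{N-2}$ tends to zero, so that the extra kinetic energy in the $x_3$ directions becomes negligible. Your execution is in fact a bit cleaner — the coordinate-block splitting of $(i\nabla-\A)(\psi_n\eta_{M_n})$ is exact and the dilation $\eta(x_3/M_n)$ with a diagonal choice of $M_n$ replaces the paper's expanding cutoffs $\eta_n$ and its H\"older estimates of the (actually vanishing) cross terms — but the underlying idea is the same.
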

\begin{proof}
 To prove this, we consider the approximating sequence $u_n$ to (\ref{Chardy}) in $\R^2$. We can choose this sequence bounded in $L^2(\R^2)$ norm, thanks to the homogeneity of the quotient under dilation.

We claim there exists a sequence of real-valued functions
$(\eta_n)_n\subset\\ \cinf{\R^{N-2}}$ such that
$\int_{\R^{N-2}}\abs{\nabla\eta_n}^2\,\longrightarrow 0$ and
$\int_{\R^{N-2}}\eta_n^2\,\longrightarrow\,+\infty$ as $n\rightarrow
+\infty$. We can namely consider a real radial function such that
$\eta_n\equiv1$ in $B_R(0)$ and $\eta_n\equiv0$ in
$\R^{N-2}\setminus B_{R+n^\alpha}(0)$, with
$\abs{\nabla\eta_n}\sim\frac{1}{n^\alpha}$, for a suitable
$\alpha>0$ (e.g. $\alpha>\frac{N-2}{2}$).

Now we consider the sequence $v_n(x_1,x_2,x_3)=u_n(x_1,x_2)\eta_n(x_3)$ where $x_3$ as usual denotes the whole set of variables in $\R^{N-2}$, and test the quotient over this sequence:
\begin{eqnarray*}
 \frac{\displaystyle\int_{\R^N}\abs{(i\nabla-\A)v_n}}{\displaystyle\int_{\R^N}\frac{\abs{v_n}^2}{x_1^2+x_2^2}}=
\frac{\displaystyle\int_{\R^N}\abs{\nabla v_n}^2-2Re\int_{\R^N}\A v_n\cdot \nabla\overline{v}_n
+\int_{\R^N}\abs{\A}^2\abs{v_n}^2}{\displaystyle\int_{\R^{N-2}}\abs{\eta_n}^2\int_{\R^2}\frac{\abs{u_n}^2}{x_1^2+x_2^2}},
\end{eqnarray*}
where the numerator is
\begin{eqnarray}
 \int_{\R^N}\eta^2_n\abs{\nabla u_n}^2
+\int_{\R^N}\abs{\A}^2\abs{\eta_n}^2\abs{u_n}^2
-2Re\int_{\R^N} \eta_n^2u_n\A\cdot\nabla\overline{u}_n \nonumber\\
+2\,Re\int_{\R^N}u_n\eta_n\nabla \eta_n\cdot\nabla \overline{u}_n
+\int_{\R^N}u_n^2\abs{\nabla\eta_n}^2
-2Re\int_{\R^N} \abs{u_n}^2\eta_n\A\cdot\nabla\eta_n\,. \label{infinitesimi numeratore}
\end{eqnarray}

About the second line (\ref{infinitesimi numeratore}) in the numerator, via H\"{o}lder inequality we have
\begin{eqnarray*}
 \lefteqn{\abs{\int_{\R^N}u_n\eta_n\nabla \eta_n\cdot\nabla \overline{u}_n}
\leq \left(\int_{\R^N}\abs{u_n}^2\abs{\nabla\eta_n}^2\right)^{1/2} \left(\int_{\R^N}\eta_n^2\abs{\nabla u_n}^2\right)^{1/2}}\\
&&= \left(\int_{\R^{N-2}}\abs{\nabla\eta_n}^2\right)^{1/2}
\left(\int_{\R^2}\abs{u_n}^2\right)^{1/2}
\left(\int_{\R^{N-2}}\eta_n^2\right)^{1/2}
\left(\int_{\R^2}\abs{\nabla u_n}^2\right)^{1/2}
\end{eqnarray*}
and
\begin{eqnarray*}
\lefteqn{\abs{\int_{\R^N}\A
u_n\eta_n\cdot\overline{u}_n\nabla\eta_n}
\leq\left(\int_{\R^N}\abs{\nabla\eta_n}^2\abs{u_n}^2\right)^{1/2}\left(\int_{\R^N}\abs{\A}^2\abs{\eta_n}^2\abs{u_n}^2\right)^{1/2}}\\
&&=
\left(\int_{\R^{N-2}}\abs{\nabla\eta_n}^2\int_{\R^2}\abs{u_n}^2\right)^{1/2}
\left(\int_{\R^{N-2}}\abs{\eta_n}^2\int_{\R^2}\abs{\A}^2\abs{u_n}^2\right)^{1/2}
\end{eqnarray*}
Therefore the Rayleigh quotient is reduced to
\begin{eqnarray*}
\lefteqn{\frac{{\displaystyle\int_{\R^2}\abs{\nabla
u_n}^2+\int_{\R^2}\abs{\A}^2\abs{u_n}^2}-2Re\displaystyle\int_{\R^2}\A
u_n\cdot\nabla\overline{u}_n}{\displaystyle\int_{\R^2}\frac{\abs{u_n}^2}{x_1^2+x_2^2}}\ +}\\
&&+\ \frac{\displaystyle2\,Re\int_{\R^N}u_n\eta_n\nabla
\eta_n\cdot\nabla
\overline{u}_n}{\displaystyle\int_{\R^{N-2}}\abs{\eta_n}^2\int_{\R^2}\frac{\abs{u_n}^2}{x_1^2+x_2^2}}
+\
\frac{\displaystyle\int_{\R^2}\abs{u_n}^2\int_{\R^{N-2}}\abs{\nabla\eta_n}^2}{\displaystyle\int_{\R^{N-2}}\abs{\eta_n}^2\int_{\R^2}\frac{\abs{u_n}^2}{x_1^2+x_2^2}}-\ \frac{\displaystyle2Re\int_{\R^N} \abs{u_n}^2\eta_n\A\cdot\nabla\eta_n}{\displaystyle\int_{\R^{N-2}}\abs{\eta_n}^2\int_{\R^2}\frac{\abs{u_n}^2}{x_1^2+x_2^2}}\\
\lefteqn{=\ H + o(1)}
\end{eqnarray*}
thanks to the properties of the sequence $\eta_n$. \qed
\end{proof}

\subsection{Variational setting}

We have seen before the quadratic form associated to
$\frac{A}{\abs{x}}$-type potentials is equivalent to the Dirichlet
form. On the contrary, we will see in case of Aharonov-Bohm
potentials it is stronger than the Dirichlet form, and consequently
the function space is a proper subset of $\dom$.

Indeed, for any $\varphi\in\cinf{\R^N\setminus\{x_1=x_2=0\}}$ we have the simple inequivalence
\begin{eqnarray*}
 \int_{\R^N}\abs{\nabla\varphi}^2&=&\int_{\R^N}\abs{(i\nabla-\A+\A)\varphi}^2 \leq C\left(\int_{\R^N}\abs{(i\nabla-\A)\varphi}^2+\int_{\R^N}\abs{\A}^2\abs{\varphi}^2\right) \\
&\leq& C\int_{\R^N}\abs{(i\nabla-\A)\varphi}^2
\end{eqnarray*}
thanks to Hardy-type inequality proved above.

It is immediate to see by this remark
$$\mathcal{H}_\A\circeq\overline{\cinf{\R^N\setminus\{x_1=x_2=0\}}}^{\int_{\R^N}\abs{(i\nabla-\A)\varphi}^2}\subseteq\dom\ .$$

To prove the strict inclusion it is sufficient to show a function
lying in $\dom$ but not in $\mathcal H_\A$. One can choose for
example $\varphi(x_1,x_2,x_3)=p(x_1,x_2,x_3)\abs{x}^{(-N+1)/2}$,
where $p$ is a cut-off function which is identically 0 in
$B_\varepsilon(0)$ and identically 1 in $\R^N\setminus
B_{2\varepsilon}(0)$: we have
$\abs{\nabla\varphi}^2\sim\abs{x}^{-N-1}$ which is integrable in
$\R^N\setminus B_\varepsilon(0)$, whereas
$\frac{\varphi}{x_1^2+x_2^2}$ is not, since $\varphi$ is far from 0
near the singular set.

\begin{rem}
Of course $\mathcal H_\A$ is a closed subspace of $\dom$. This is a
straightforward consequence of the density of
$\cinf{\R^N\setminus\{x_1=x_2=0\}}$ in $\mathcal H_A$ and the
relation between the two quadratic forms. Then Solimini's Theorem
(\ref{solimini}) holds also in this space.
\end{rem}

\smallskip

Following what we did in the previous case, we state the following
\begin{lem}\label{lemmadivergenzatraslazioneAB}
 Let $x_n$ a sequence of points such that $\abs{({x_n}^1,{x_n}^2)}\rightarrow\infty$ as $n\rightarrow\infty$. Then for any $u\in\mathcal H_\A$ as $n\rightarrow\infty$ we have
$$
\frac{\displaystyle\int_{\R^{N}}\bigg\vert\left(i\nabla-\A\right)u(\cdot+x_n)\bigg\vert^2
-\int_{\R^{N}}\frac{a(\theta)}{x_1^2+x_2^2}\,\abs{u(\cdot+x_n)}^2}{\bigg(\displaystyle\int_{\R^N}\abs{u}^{2^*}\bigg)^{2/2^*}}
\rightarrow\frac{\displaystyle\int_{\R^N}\abs{\nabla
u}^2}{\left(\displaystyle\int_{\R^N}\abs{u}^{2^*}\right)^{2/{2^*}}}.
 $$
\end{lem}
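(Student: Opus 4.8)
The plan is to follow the scheme of Lemma~\ref{lemmadivergenzatraslazione}. Put $v_n:=u(\cdot+x_n)$. Since $\A$ is real-valued with $|\A(x)|^2=\alpha^2/(x_1^2+x_2^2)$, expanding the square gives the pointwise identity
$$\big|(i\nabla-\A)v_n\big|^2=|\nabla v_n|^2+\frac{\alpha^2}{x_1^2+x_2^2}\,|v_n|^2+2\,\A\cdot\mathrm{Im}\big(\overline{v_n}\,\nabla v_n\big).$$
Upon integration the first term equals $\int_{\R^N}|\nabla u|^2$ by translation invariance, which is precisely the target numerator; the mixed term is bounded in modulus by $2\big(\alpha^2\int_{\R^N}\frac{|v_n|^2}{x_1^2+x_2^2}\big)^{1/2}\nor{\nabla u}_{L^2}$ via Cauchy--Schwarz; and $\big|\int_{\R^N}\frac{a(\theta)}{x_1^2+x_2^2}|v_n|^2\big|\le\nor{a}_\infty\int_{\R^N}\frac{|v_n|^2}{x_1^2+x_2^2}$. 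Since $\int_{\R^N}|v_n|^{2^*}=\int_{\R^N}|u|^{2^*}$ is constant in $n$, the statement reduces --- exactly as in Lemma~\ref{lemmadivergenzatraslazione} --- to proving that
$$\int_{\R^N}\frac{|u(x+x_n)|^2}{x_1^2+x_2^2}\,dx\ \longrightarrow\ 0\qquad\text{whenever }|(x_n^1,x_n^2)|\to\infty.$$

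I would establish this first for $u$ in the dense class $\cinf{\R^N\setminus\{x_1=x_2=0\}}$. If $\pi$ denotes the orthogonal projection onto the $(x_1,x_2)$-plane, then $\pi(\mathrm{supp}\,u)$ is a compact subset of $\R^2\setminus\{0\}$, so for $n$ large $\mathrm{supp}(u(\cdot+x_n))$ lies at distance $d_n\to\infty$ from the singular subspace $\{x_1=x_2=0\}$; there $x_1^2+x_2^2\ge d_n^2$, and hence the integral is $\le d_n^{-2}\nor{u}_{L^2}^2\to0$. In particular, for such $u$ the quotient tends to $\int_{\R^N}|\nabla u|^2\,/\,\big(\int_{\R^N}|u|^{2^*}\big)^{2/2^*}$.

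Passing to a general $u\in\mathcal H_\A$ is the delicate point. Writing $u=\varphi+w$ with $\varphi\in\cinf{\R^N\setminus\{x_1=x_2=0\}}$ and $\nor{w}_{\mathcal H_\A}$ small, the $\varphi$-part is handled as above; for the $w$-part one decomposes $\R^N$ into a tube $\{x_1^2+x_2^2\le\rho^2\}$ about the singular subspace and its complement. This plays the role of the splitting $B_R\cup(\R^N\setminus B_R)$ of Lemma~\ref{lemmadivergenzatraslazione}, with the isolated singularity at the origin replaced by a whole codimension-two subspace: on the tube one invokes the Hardy-type inequality \eqref{hardy}, $\int_{\R^N}\frac{|w|^2}{x_1^2+x_2^2}\le H\nor{w}_{\mathcal H_\A}^2$, together with the smallness of $\rho$; off the tube the weight is bounded and one combines the translation $x\mapsto x+x_n$ with a far-field truncation as in Lemma~\ref{lemmadivergenzatraslazione}. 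The main obstacle is exactly this: whereas in Lemma~\ref{lemmadivergenzatraslazione} the point singularity simply escapes the support of $u$ after translation, here no translation in the first two variables removes the singular subspace, and the control \eqref{hardy} is attached to that subspace --- so one genuinely needs the Hardy inequality, not merely $L^{2^*}$-integrability, to make the near-singularity estimate uniform in $n$. (For the application --- ruling out translation divergence in the concentration-compactness argument --- it in fact suffices to retain that, when $|(x_n^1,x_n^2)|\to\infty$, the magnetic term keeps the quotient bounded below by $S$.)
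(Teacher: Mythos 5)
Your reduction of the quotient to the single statement $\int_{\R^N}\frac{\abs{u(x+x_n)}^2}{x_1^2+x_2^2}\,dx\to0$ is exactly the paper's route (its proof is literally ``follow Lemma~\ref{lemmadivergenzatraslazione}, the singularity now involving only the first two variables''), and your treatment of the dense class $\cinf{\R^N\setminus\{x_1=x_2=0\}}$ is correct. The genuine gap is precisely the point you flag and then only sketch: the passage to a general $u\in\mathcal H_\A$, and the density-plus-tube argument you propose does not close it. For the remainder $w$ what must be made small \emph{uniformly in $n$} is
$$
\int_{\R^N}\frac{\abs{w(x+x_n)}^2}{x_1^2+x_2^2}\,dx\ =\ \int_{\R^N}\frac{\abs{w(y)}^2}{\abs{(y_1,y_2)-(x_n^1,x_n^2)}^2}\,dy ,
$$
i.e.\ the Hardy weight attached to the \emph{translated} subspace $\{(y_1,y_2)=(x_n^1,x_n^2)\}$, whereas \eqref{hardy} controls only the weight attached to the fixed subspace $\{y_1=y_2=0\}$; since the codimension-two Hardy inequality fails without the magnetic term (this is the very point of Laptev--Weidl), $\nor{w}_{\mathcal H_\A}$ gives no bound on the displayed quantity, uniform in $n$ or otherwise. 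Likewise ``off the tube the weight is bounded'' does not help: that region has infinite measure and $w$ is only in $L^{2^*}$, so boundedness of the weight yields no smallness. Note also that the device implicitly behind Lemma~\ref{lemmadivergenzatraslazione} --- Lorentz--H\"older with $\abs{x}^{-2}\in L^{N/2,\infty}(\R^N)$ to handle the far region --- is unavailable here, because $(x_1^2+x_2^2)^{-1}\notin L^{N/2,\infty}(\R^N)$.

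The obstruction is not merely in your write-up: without extra hypotheses on $u$ the statement itself fails in this generality, so no density argument can repair it. Take disjoint real bumps $\phi_k$, equal to $1$ on the unit ball centred at $p_k=(k^2,0,0,\dots,0)$ and supported in the ball of radius $2$; then $u=\sum_k k^{-1}\phi_k\in\mathcal H_\A$ (the partial sums are Cauchy, since $\abs{\A}=O(k^{-2})$ on the $k$-th bump), but for $x_n=(n^2,0,0,\dots,0)$ the translate $u(\cdot+x_n)$ equals the constant $1/n$ near the origin, so $\int_{\R^N}\abs{(i\nabla-\A)u(\cdot+x_n)}^2=+\infty$ for every $n$ and the claimed convergence is impossible; the functional $u\mapsto\int_{\R^N}\frac{\abs{u(\cdot+x_n)}^2}{x_1^2+x_2^2}$ is not even finite, let alone continuous, on $\mathcal H_\A$ uniformly in $n$. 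In this respect the paper's own one-line proof shares the gap, and the lemma is really only available for functions in the dense class you treat, or under additional decay/support assumptions. Your closing parenthesis is the right repair for the way the lemma is actually used: in the compactness argument one only needs that along translations with $\abs{(x_n^1,x_n^2)}\to\infty$ the quotient stays bounded below by $S+o(1)$, which follows from the diamagnetic inequality together with the sign (or separate treatment) of the electric term, without any claim about the exact limit; if you restate and prove that weaker fact, your argument for the dense class plus this lower bound covers everything the paper needs.
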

\begin{proof}
We can follow the proof of Lemma (\ref{lemmadivergenzatraslazione}) noting here the singularity involves only the first two variables. \qed
\end{proof}

So that we immediately have the following property for $S_{\A,a}$:
\begin{prop}
If the electric potential $a$ is invariant under translations in
$\R^{N-2}$ (as the magnetic vector potential actually is), the
related minimum problem leads to
$$S_{\A,a}=\inf_{u\in\mathcal H_A\setminus\{0\}}\frac{\displaystyle\int_{\R^N}\abs{(i\nabla-\A)u}^2-\int_{\R^N}\frac{a}{x_1^2+x_2^2}\abs{u}^2}{\displaystyle\left(\int_{\R^N}\abs{u}^{2^*}\right)^{2/2^*}}\ \ \leq S\ .$$
\end{prop}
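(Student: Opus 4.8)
\emph{Proof proposal.} The plan is to run, in the Aharonov--Bohm framework, the same soft argument that gave Proposition (\ref{SA<S}): we produce a family of admissible competitors on which the magnetic Rayleigh quotient converges to the pure Sobolev quotient (\ref{s}), namely translates of compactly supported functions pushed to infinity inside the $(x_1,x_2)$-plane. The identity $S_{\A,a}=\inf_{u\in\mathcal H_\A\setminus\{0\}}(\dots)$ merely records that $S_{\A,a}$ is defined over the natural form domain $\mathcal H_\A$; the point to be proved is the inequality $S_{\A,a}\le S$.

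First I would fix a nonzero $\varphi\in\cinf{\R^N}$ and a sequence $x_n=(x_n^1,x_n^2,0)$ with $x_n^1,x_n^2\in\R$ and $\abs{(x_n^1,x_n^2)}\to\infty$, and set $\psi_n:=\varphi(\cdot+x_n)$. Since $\varphi$ has compact support, for $n$ large $\mathrm{supp}\,\psi_n$ is disjoint from $\{x_1=x_2=0\}$, so $\psi_n\in\cinf{\R^N\setminus\{x_1=x_2=0\}}\subset\mathcal H_\A$ is an admissible competitor for $S_{\A,a}$, and $\nor{\psi_n}_{2^*}=\nor{\varphi}_{2^*}$ for every $n$. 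The crucial elementary remark is that $x_1^2+x_2^2\ge\big(\abs{(x_n^1,x_n^2)}-\mathrm{diam}\,\mathrm{supp}\,\varphi\big)^2\to\infty$ uniformly on $\mathrm{supp}\,\psi_n$. Writing $\abs{(i\nabla-\A)\psi_n}^2=\abs{\nabla\psi_n}^2-2\,\mathrm{Re}\big(i\,\A\cdot\nabla\psi_n\,\overline{\psi_n}\big)+\abs{\A}^2\abs{\psi_n}^2$ and recalling $\abs{\A}^2=\alpha^2/(x_1^2+x_2^2)$, the terms $\int_{\R^N}\abs{\A}^2\abs{\psi_n}^2$ and $\int_{\R^N}\frac{a}{x_1^2+x_2^2}\abs{\psi_n}^2$ are bounded by $C\big(\abs{(x_n^1,x_n^2)}-\mathrm{diam}\,\mathrm{supp}\,\varphi\big)^{-2}\nor{\varphi}_2^2\to0$ (using that $a$ is bounded), while the cross term is $o(1)$ by Cauchy--Schwarz, $\nor{\nabla\psi_n}_2=\nor{\nabla\varphi}_2$ being fixed. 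This is precisely the computation underlying Lemma (\ref{lemmadivergenzatraslazioneAB}), which goes through verbatim for an arbitrary $\varphi\in\cinf{\R^N}$ (no vanishing near the singular set is used once $n$ is large). Hence
\[
\frac{\displaystyle\int_{\R^N}\abs{(i\nabla-\A)\psi_n}^2-\int_{\R^N}\frac{a}{x_1^2+x_2^2}\abs{\psi_n}^2}{\nor{\psi_n}_{2^*}^2}\ \longrightarrow\ \frac{\displaystyle\int_{\R^N}\abs{\nabla\varphi}^2}{\big(\int_{\R^N}\abs{\varphi}^{2^*}\big)^{2/2^*}},
\]
so that $S_{\A,a}\le\int_{\R^N}\abs{\nabla\varphi}^2\big/\big(\int_{\R^N}\abs{\varphi}^{2^*}\big)^{2/2^*}$ for every $\varphi\in\cinf{\R^N}$. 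Taking the infimum over this dense subset of $\dom$ and using the definition (\ref{s}) of $S$ yields $S_{\A,a}\le S$.

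I do not expect a genuine obstacle. The only feature absent from Proposition (\ref{SA<S}) is that $\mathcal H_\A$ is a proper subspace of $\dom$, so one cannot test the quotient against an arbitrary element of $\dom$ --- and this is exactly why the competitors are chosen to be far translates of compactly supported functions, which do belong to $\mathcal H_\A$. The hypothesis that $a$ is invariant under translations in $\R^{N-2}$ is used only to make the quadratic form, hence $S_{\A,a}$, well posed on $\mathcal H_\A$; since the vectors $x_n$ have vanishing $\R^{N-2}$-component and $a$ is bounded, the electric term does not interfere with the limit. If one prefers not to revisit the proof of Lemma (\ref{lemmadivergenzatraslazioneAB}), it is enough to carry out the three elementary limits above by hand for a fixed smooth compactly supported $\varphi$ whose Dirichlet--Rayleigh quotient is within $\eps$ of $S$, and then let $\eps\to0$.
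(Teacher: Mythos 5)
Your proposal is correct and takes essentially the same route as the paper, which simply invokes the proof of Proposition (\ref{SA<S}) together with Lemma (\ref{lemmadivergenzatraslazioneAB}): translate competitors to infinity in the $(x_1,x_2)$-plane so that the magnetic and electric terms vanish in the limit, then pass to the Sobolev infimum. Your choice of far translates of arbitrary $\varphi\in\cinf{\R^N}$ (whose supports avoid $\{x_1=x_2=0\}$, hence lie in $\mathcal H_\A$) is a tidy way of handling the fact that $\mathcal H_\A\subsetneq\dom$, a point the paper's terse proof leaves implicit.
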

\begin{proof}
We follow the proof of Proposition (\ref{SA<S}) taking into account Lemma (\ref{lemmadivergenzatraslazioneAB}). \qed
\end{proof}

\subsection{Achieving the Sobolev constant}

As in the previous case, we state the following
\begin{prop}
 If $S_{\A,a}<S$ then $S_{\A,a}$ is achieved.
\end{prop}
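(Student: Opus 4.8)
The plan is to mimic exactly the argument used to prove Theorem \ref{sa<s}, transporting it to the space $\mathcal H_\A$. First I would take a minimizing sequence $u_n\in\mathcal H_\A$ for $S_{\A,a}$; by positivity of the quadratic form it is bounded in $\mathcal H_\A$, hence (by the equivalence with the Dirichlet norm, which here is only a one-sided domination, but still gives boundedness in $\dom$) bounded in $\dom$. Since $\mathcal H_\A$ is a closed subspace of $\dom$, Solimini's Theorem \ref{solimini} applies inside $\mathcal H_\A$: up to a subsequence there are profiles $\phi_i\in\mathcal H_\A$ and mutually diverging rescalings $\rho_n^i$ with $u_n-\sum_i\rho_n^i(\phi_i)\to 0$ in $L^{2^*}$. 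As in Theorem \ref{sa<s}, invariance of the Rayleigh quotient under dilations rules out divergence by concentration or vanishing, so we may normalise $\lambda_n^1=1$, and the only remaining possibility is divergence by translation.

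Next I would isolate the first profile $\phi:=\phi_1\neq 0$ and write $u_n(\cdot)-\phi(\cdot+x_n)-\sum_{i\ge 2}\rho_n^i\phi_i\to 0$ in $L^{2^*}$, setting $v_n:=\sum_{i\ge 2}\rho_n^i\phi_i\rightharpoonup 0$ both in $L^{2^*}$ and in $\mathcal H_\A$. The Brezis--Lieb type splitting $\int|u_n|^{2^*}=\int|\phi|^{2^*}+\int|v_n|^{2^*}+o(1)$ together with the orthogonality $\int|(i\nabla-\A)(\phi(\cdot+x_n)+v_n)|^2=\int|(i\nabla-\A)\phi(\cdot+x_n)|^2+\int|(i\nabla-\A)v_n|^2+o(1)$ (the cross term vanishes by weak convergence $v_n\rightharpoonup 0$), plus the analogous splitting for the $\int\frac{a}{x_1^2+x_2^2}|\cdot|^2$ term, reduces the Rayleigh quotient to a convex-combination inequality which forces $\int|v_n|^{2^*}\to 0$. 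Hence $\sum_{i\ge 2}\rho_n^i\phi_i\to 0$ in $L^{2^*}$, and by the equality case in \eqref{solimini2} of Theorem \ref{solimini} we even get $u_n(\cdot)-\phi(\cdot+x_n)\to 0$ strongly in $\mathcal H_\A$.

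Finally I would exclude the translation divergence by contradiction. Suppose $|(x_n^1,x_n^2)|\to\infty$ along the relevant subsequence; then Lemma \ref{lemmadivergenzatraslazioneAB} shows the quotient evaluated on $\phi(\cdot+x_n)$ converges to $\int_{\R^N}|\nabla\phi|^2/(\int_{\R^N}|\phi|^{2^*})^{2/2^*}\ge S$, contradicting $S_{\A,a}<S$. If instead the first two coordinates of $x_n$ stay bounded while $|x_n|\to\infty$ (translation only in the $x_3$-variables), this is still allowed by the group of symmetries but one observes the magnetic potential $\A$ and the Hardy weight are translation-invariant in $\R^{N-2}$, so the limit profile $\phi(\cdot+x_n)$ carries the full quadratic form $Q_{\A,a}$ and the quotient cannot drop below $S_{\A,a}$ itself; more simply, passing to a further subsequence one reduces to the case $x_n\to x_\infty$ finite (after modding out the $x_3$-translation), where weak lower semicontinuity of the quadratic form and the strong $L^{2^*}$ convergence give that the limit of $u_n(\cdot+x_n)$ is a nonzero minimiser. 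In all cases $S_{\A,a}$ is attained. The main obstacle is the careful bookkeeping in the case where translation occurs purely in the $\R^{N-2}$ directions (where $\A$ and the singular weight are invariant, so Lemma \ref{lemmadivergenzatraslazioneAB} does not directly apply); there one must argue that such a shift does not cost energy and hence the shifted profile is still a genuine minimiser in $\mathcal H_\A$.
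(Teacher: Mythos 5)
Your proposal is correct and follows essentially the same route as the paper: apply Solimini's theorem in the closed subspace $\mathcal H_\A$ to reduce to $u_n-\Phi(\cdot+x_n)\to 0$, use the translation invariance of $\A$ and of the singular weight in the $\R^{N-2}$ directions to dismiss shifts in $x_3$ as a convergence case, and rule out $\abs{(x_n^1,x_n^2)}\to\infty$ via Lemma \ref{lemmadivergenzatraslazioneAB}, which drives the quotient up to at least $S$ and contradicts $S_{\A,a}<S$. The extra bookkeeping you supply (the Brezis--Lieb splitting and the equality case of \eqref{solimini2}) is exactly what the paper invokes implicitly by referring back to Theorem \ref{sa<s}.
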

\begin{proof}
Let us consider a minimizing sequence $u_n$. As we have already
noticed, we can apply Solimini's Theorem (\ref{solimini}) in order
to get $u_n-\Phi(\cdot+x_n)\,\longrightarrow\,0$ in $\mathcal H_\A$
as $n\rightarrow+\infty$ following the same argument as above. Once
again, we argue by contradiction. Supposing the sequence $x_n$
diverges means it is divergent with respect to the two first
variables $(x_n^1,x_n^2)$, since both the electric and magnetic
potential are invariant under translations in $\R^{N-2}$. This means
that if $x_n$ diverges to infinity with respect to $x_n^3$, this is
a convergence case.

So, by contradiction let us analyze only the case
$\abs{(x_n^1,x_n^2)}\rightarrow+\infty$. According to Lemma
(\ref{lemmadivergenzatraslazioneAB}) we have
\begin{eqnarray*}
\lefteqn{\frac{\displaystyle\int_{\R^N}\abs{(i\nabla-\A)u_n}^2-\int_{\R^N}\frac{a}{x_1^2+x_2^2}\abs{u_n}^2}{\displaystyle\left(\int_{\R^N}\abs{u_n}^{2^*}\right)^{2/2^*}}}\\
&&=\frac{\displaystyle\int_{\R^N}\abs{(i\nabla-\A)\Phi(\cdot+x_n)}^2-\int_{\R^N}\frac{a}{x_1^2+x_2^2}\abs{\Phi(\cdot+x_n)}^2}{\displaystyle\left(\int_{\R^N}\abs{\Phi(\cdot+x_n)}^{2^*}\right)^{2/2^*}}+o(1) \\
&&\geq\frac{\displaystyle\int_{\R^N}\abs{\nabla\abs{\Phi(\cdot+x_n)}}^2-\int_{\R^N}\frac{a}{x_1^2+x_2^2}\abs{\Phi(\cdot+x_n)}^2}{\displaystyle\left(\int_{\R^N}\abs{\Phi(\cdot+x_n)}^{2^*}\right)^{2/2^*}}+o(1)
\geq S+o(1)\,.
\end{eqnarray*}
Thus we obtain $S_{\A,a}\geq S$, a contradiction. \qed
\end{proof}

\subsection{Symmetry of solutions}

We introduce the space
$$\mathcal H_\A^k=\{u(z,y)\in\mathcal H_\A\ \textrm{s.t.}\ u(\e^{i\frac{2\pi}{k}}z,y)=u(z,\abs{y})\}\ ,$$
which is a closed subspace of $\mathcal H_\A$, so Solimini's Theorem (\ref{solimini}) holds in it.

We should suppose that the magnetic potential $\A$ is invariant under the $\Z_k\times SO(N-2)$-group action on $\mathcal H_\A$, as in (\ref{A-Zksymm}). But in this case, the magnetic vector potential enjoys this symmetry thanks to its special form. On the other hand, we choose the electric potential $a$ as a negative constant.

Following the same proof as in the previous case, we can state the
following
\begin{prop}
 If $S_{\A,a}^k<k^{2/N}S$ then $S_{\A,a}^k$ is achieved.
\end{prop}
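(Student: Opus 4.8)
The plan is to run the proof of Theorem (\ref{s^kachieved}) almost verbatim, with $\mathcal{H}_\A^k$ in place of $\domk$ and Lemma (\ref{lemmadivergenzatraslazioneAB}) in place of Lemma (\ref{lemmadivergenzatraslazione}). First I would fix a minimizing sequence $\{u_n\}\subset\mathcal{H}_\A^k$ for $S_{\A,a}^k$, normalised by $\nor{u_n}_{2^*}=1$, and write $Q_{\A,a}(u)=\int_{\R^N}\abs{(i\nabla-\A)u}^2-\int_{\R^N}\frac{a}{x_1^2+x_2^2}\abs{u}^2$. Since $a$ is a negative constant, $-\int_{\R^N}\frac{a}{x_1^2+x_2^2}\abs{u_n}^2\geq0$, so $Q_{\A,a}(u_n)\geq\int_{\R^N}\abs{(i\nabla-\A)u_n}^2$; as the left-hand side tends to $S_{\A,a}^k$, the sequence is bounded in $\mathcal{H}_\A$, hence in $\dom$, and moreover $S_{\A,a}^k>0$. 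As already remarked, $\mathcal{H}_\A^k$ is a closed subspace of $\dom$, so Solimini's Theorem (\ref{solimini}) applies: up to a subsequence there are profiles $\Phi_i$ and mutually diverging rescalings $\rho_n^i$ with $u_n-\sum_i\rho_n^i(\Phi_i)\to0$ in $L^{2^*}$, and dilation invariance of the Rayleigh quotient forces the $\rho_n^i$ to diverge by translation only, so $\rho_n^1$ may be normalised to the identity.

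Next, exactly as in the proofs of Theorems (\ref{sa<s}) and (\ref{s^kachieved}), one extracts a single nontrivial contribution. Writing it as $\sum_{j=1}^k\Phi(\cdot+x_n^j)$ and the remainder as $v_n$, one has $v_n\rightharpoonup0$ in $L^{2^*}$ and in $\dom$, so the elementary splittings of $\int_{\R^N}\abs{u_n}^{2^*}$ and of $Q_{\A,a}$ along this weak convergence hold; plugging them into the quotient and using $S_{\A,a}^k>0$ forces $\nor{v_n}_{2^*}\to0$, whence equality in (\ref{solimini2}) and the strong convergence $u_n-\sum_{j=1}^k\Phi(\cdot+x_n^j)\to0$ in $\mathcal{H}_\A^k$. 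Here the one point that differs from Theorem (\ref{s^kachieved}) enters: the concentration set of a $\Z_k\times SO(N-2)$-symmetric decomposition is $\Z_k\times SO(N-2)$-invariant, and being finite (by the summability (\ref{solimini2})) it must be contained in the fixed-point locus $\R^2\times\{0\}$ of $SO(N-2)$; the $\Z_k$-action then splits it into orbits of $k$ points $x_n^1,\dots,x_n^k$ lying on that axis, which is what legitimates the form $\sum_{j=1}^k\Phi(\cdot+x_n^j)$ with $\Phi\neq0$.

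I would then argue by contradiction. If $S_{\A,a}^k$ is not attained, then $\abs{x_n^j}\to\infty$ for every $j$ (the symmetry is preserved along the sequence); since the $x_n^j$ lie on $\R^2\times\{0\}$, this is precisely the divergence of the first two coordinates, $\abs{((x_n^j)_1,(x_n^j)_2)}\to\infty$, which is the hypothesis of Lemma (\ref{lemmadivergenzatraslazioneAB}). That lemma gives $Q_{\A,a}(\Phi(\cdot+x_n^j))\to\int_{\R^N}\abs{\nabla\Phi}^2$ (its proof rests on $\int_{\R^N}\frac{\abs{\Phi(\cdot+x_n^j)}^2}{x_1^2+x_2^2}\to0$ and on $\A(\cdot-x_n^j)\to0$ locally uniformly away from the shifted singular axis), while the cross terms are $o(1)$ because the bubbles' mass escapes in mutually divergent directions; hence
\[
Q_{\A,a}\Big(\sum_{j=1}^k\Phi(\cdot+x_n^j)\Big)=k\int_{\R^N}\abs{\nabla\Phi}^2+o(1),\qquad \int_{\R^N}\Big\vert\sum_{j=1}^k\Phi(\cdot+x_n^j)\Big\vert^{2^*}=k\int_{\R^N}\abs{\Phi}^{2^*}+o(1).
\]
Passing to the limit in the Rayleigh quotient of $u_n$ gives $S_{\A,a}^k=k^{2/N}\,\dfrac{\int_{\R^N}\abs{\nabla\Phi}^2}{\big(\int_{\R^N}\abs{\Phi}^{2^*}\big)^{2/2^*}}\geq k^{2/N}S$, contradicting the hypothesis $S_{\A,a}^k<k^{2/N}S$.

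I expect the main (indeed essentially the only) obstacle beyond transcribing Theorem (\ref{s^kachieved}) to be this geometric step: verifying that the escaping concentration points have vanishing $\R^{N-2}$-component, so that Lemma (\ref{lemmadivergenzatraslazioneAB}) — which demands divergence precisely in the first two variables, the only ones that the Aharonov--Bohm potential $\A$ and the weight $1/(x_1^2+x_2^2)$ see — can legitimately be invoked. Everything else (boundedness of the minimizing sequence, the reduction to one symmetric cluster, the Brezis--Lieb type splittings, the final arithmetic) is a routine adaptation of the earlier arguments.
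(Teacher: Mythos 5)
Your proposal is correct and follows exactly the route the paper intends: the paper's own ``proof'' is just the remark that one repeats the argument of Theorem (\ref{s^kachieved}) in $\mathcal H_\A^k$ (where Solimini's decomposition is available, since it is a closed subspace), replacing Lemma (\ref{lemmadivergenzatraslazione}) by Lemma (\ref{lemmadivergenzatraslazioneAB}) in the contradiction step. Your treatment of the one genuinely new point --- that escaping concentration points must diverge in the first two variables, because an escape with unbounded $\R^{N-2}$-component would, by the $SO(N-2)$-invariance of $u_n$, generate arbitrarily many mutually divergent copies of the nontrivial bubble and violate the bound (\ref{solimini2}) --- is the right justification (strictly one concludes only that the $y$-components stay bounded, which already suffices for Lemma (\ref{lemmadivergenzatraslazioneAB}), rather than that the points lie exactly on $\R^2\times\{0\}$), and it is a detail the paper leaves implicit.
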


Now we look for sufficient conditions to have $S_{\A,a}^k<k^{2/N}S$.

The idea is again to check the quotient over a suitable sequence of
test functions. We choose as well $\sum_{i=1}^kw_j$, where $w_j$ are
defined in (\ref{wj}) and the lines above it. Of course, we need to
multiply them by a cut-off function
$\varphi(x_1,x_2,x_3)=\varphi(x_1,x_2)=\varphi(\sqrt{x_1^2+x_2^2})=\varphi(\rho)$,
in order to obtain the necessary integrability near the singular set.

\begin{lem}\label{valangadicontiAB}
 Choosing $R$ big enough in (\ref{wj}), the quotient evalueted over $\varphi\sum_{i=1}^kw_j$ is strictly less than $k^{2/N}S$, and so the infimum $S^k_{\A,a}$.
\end{lem}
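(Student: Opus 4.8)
The plan is to repeat the argument of Proposition (\ref{valangadiconti}), cutting the bubbles off near the singular set. I would place the $k$ points $x_j=(R\e^{\frac{2\pi i}{k}j}\xi_0,0)$, $\abs{\xi_0}=1$, on the circle of radius $R$ in the plane $\R^2\times\{0\}\subset\R^N$, take $w_j$ as in (\ref{wj}), and let $\varphi=\varphi(\rho)$, $\rho=\sqrt{x_1^2+x_2^2}$, be a fixed smooth cut-off with $\varphi\equiv0$ on $\{\rho\leq1\}$ and $\varphi\equiv1$ on $\{\rho\geq2\}$. First I would check that $u:=\varphi\sum_{j=1}^{k}w_j$ is an admissible competitor: it is nonzero as soon as $R>2$; it vanishes near $\{x_1=x_2=0\}$, hence lies in $\mathcal H_\A$ by the Hardy-type inequality (\ref{hardy}); and it belongs to $\mathcal H_\A^k$, because $\sum_j w_j$ is invariant under the $\Z_k\times SO(N-2)$ action (by the choice of the phase $m$ in (\ref{wj})), while $\varphi$ depends only on the invariant quantity $\rho$. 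The geometric fact that makes the whole estimate work is that each centre $x_j$ lies at distance exactly $R$ from the singular set, so $\varphi\equiv1$ and $\nabla\varphi\equiv0$ throughout the region where $\sum_j w_j$ concentrates its mass; on the shell $\{1\leq\rho\leq2\}$, the only place where $\varphi$ fails to be constant, one has $\abs{x-x_j}\geq R-2$, hence both $\abs{w_j}$ and $\abs{\nabla w_j}$ are $\leq C\big((R-2)^2+\abs{y}^2\big)^{-(N-2)/2}$ there, and since $\int_{\R^{N-2}}\big((R-2)^2+\abs{y}^2\big)^{-(N-2)}\,dy=O(R^{-(N-2)})$, every shell integral built out of the factors $w_j,\nabla w_j,\nabla\varphi$ is $O(R^{-(N-2)})$ or smaller.

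Next I would expand the numerator $Q_{\A,a}(u)$ as in (\ref{sus3}), writing $\nabla(\varphi w_j)=\varphi\nabla w_j+w_j\nabla\varphi$ and using $-\Delta w_j=\abs{w_j}^{2^*-2}w_j$ together with (\ref{wj-S}). Collecting terms one obtains: the diagonal Dirichlet part $\sum_j\int\varphi^2\abs{\nabla w_j}^2=kS^{N/2}+O(kR^{-(N-2)})$; the off-diagonal Dirichlet part, which after an integration by parts against the equation reduces, modulo cut-off errors supported on the shell, to the quantity $\alpha=\int_{\R^N}Re\big\{\sum_{i\neq j}\abs{w_i}^{2^*-2}w_i\,\overline{w_j}\big\}$; the potential term $\int\frac{\abs{\A}^2-a}{\rho^2}\abs{u}^2$, which, since $\abs{\A}^2\leq C\rho^{-2}$ and $\rho\geq1$ on $\operatorname{supp}\varphi$, is bounded by $(\abs{a}+C)\int_{\{\rho\geq1\}}\rho^{-2}\big\vert\sum_j w_j\big\vert^2$, hence, by the triangle inequality in $L^2(\rho^{-2}\,dx)$ and the analogue of (\ref{asintotico_beta}), is $O(k^2\log R/R^2)$ for $N=4$ and $O(k^2/R^2)$ for $N\geq5$ -- exactly the order of the quantity $\beta$ of Proposition (\ref{valangadiconti}); and the magnetic cross term $-2\,Re\big\{i\int\varphi^2\,\A\cdot\sum_{i,j}\nabla w_i\,\overline{w_j}\big\}$, whose diagonal ($i=j$) part vanishes because $\A$ and $\nabla\abs{w_j}$ are real, and whose off-diagonal part is the quantity $\gamma$, for which Lemma (\ref{asintotico gamma totale}) applies verbatim since near the concentration points $\abs{\A}=O(1/R)$, the same size there as $\abs{A(\theta)}/\abs{x}$. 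Each remaining term produced by $\varphi$ is the shell integral of a product of two of the factors $w_i,\nabla w_i$ against bounded multiples of $\nabla\varphi$ or $1-\varphi$, hence $O(R^{-(N-2)})$ by the bound above, and the full double sum of these is $O(k^2R^{-(N-2)})$, which is $o(\alpha)$ since $\alpha\gg k^{N-1}/R^{N-2}$ by the proof of Lemma (\ref{lemma positivita'alfa}) and $N\geq4$. For the denominator I would run the convexity step of Lemma (\ref{lemma disuguaglianza den}) with (\ref{disuguaglianza susanna}), the only new ingredient being the harmless error $\int(1-\varphi^{2^*})\big\vert\sum_j w_j\big\vert^{2^*}=O(kR^{-(N-2)2^*})$, obtaining $\int_{\R^N}\abs{u}^{2^*}\geq kS^{N/2}+2^*(1-\delta)\alpha$.

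Combining the two estimates and Taylor-expanding the denominator exactly as in the proof of Proposition (\ref{valangadiconti}), the Rayleigh quotient over $u$ takes the same shape as the expression handled in Lemma (\ref{scelta alfa,beta,gamma}), with $\beta$ replaced by the same-order quantity just described. One then fixes the parameters as in that lemma: first $k^{(N-1)/(N-2)}=o(R)$, which makes $\alpha/k,\ \beta/k,\ \gamma/k$ small and keeps us inside the hypothesis of Lemma (\ref{lemma disuguaglianza den}); then, if $N\geq5$, also $R=o(k^{(N-3)/(N-4)})$ so that $\alpha\gg\beta$ (compatibly, since $\frac{N-1}{N-2}<\frac{N-3}{N-4}$), while $\alpha\gg\gamma$ holds already as $k\to\infty$. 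With these choices the coefficient of $k^{2/N}S$ in the expansion is positive and strictly smaller than $1$, so the quotient over $u$ is strictly less than $k^{2/N}S$, whence $S^k_{\A,a}<k^{2/N}S$ as claimed.

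The only genuine work is the bookkeeping of the second paragraph: one must verify that inserting $\varphi$ perturbs each term of the expansion by a quantity that is $o(\alpha)$. This is a little delicate because the shell $\{1\leq\rho\leq2\}$ is unbounded in the $\R^{N-2}$ variables, so the smallness does not come from a bounded volume but from integrating the polynomial decay of $w_j$ and $\nabla w_j$ in the $y$-variables -- which is exactly what the elementary bound $\int_{\R^{N-2}}\big((R-2)^2+\abs{y}^2\big)^{-(N-2)}\,dy=O(R^{-(N-2)})$ supplies. Once this is in place, all the substantial analytic input -- Lemmas (\ref{lemma positivita'alfa}), (\ref{lemma disuguaglianza den}), (\ref{scelta alfa,beta,gamma}), (\ref{lemma_asintotici}) and (\ref{asintotico gamma totale}) -- carries over unchanged, because on the regions that matter the Aharonov--Bohm potential $\A$ has the same size as $\tfrac{A(\theta)}{\abs{x}}$ and the extra contribution $\abs{\A}^2\rho^{-2}$ is of lower order than the electric term.
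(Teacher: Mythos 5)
Your proposal follows essentially the same route as the paper: test the quotient on $\varphi\sum_{j}w_j$, expand numerator and denominator using (\ref{wj-S}), the equation for $w_j$, the convexity estimate (\ref{disuguaglianza susanna}) and Lemma (\ref{asintotico gamma totale}), and then fix $k,R$ exactly as in Lemma (\ref{scelta alfa,beta,gamma}). Your bookkeeping of the cut-off errors (shell integrals of order $O(k^2R^{-(N-2)})$, correctly accounting for the shell being unbounded in the $\R^{N-2}$ variables) is somewhat coarser than the orders asserted in the paper, but it is still $o(\alpha)$ under the stated parameter choices, so the conclusion is unaffected.
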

\begin{proof}
Let us check the quotient over $\varphi\sum_{j=1}^k w_j$. In
$$
\int_{\R^N}\Big\vert\nabla(\varphi\sum_{j=1}^k w_j)\Big\vert^2
+\frac{\alpha^2-a}{x_1^2+x_2^2}\varphi^2\Big\vert\sum_{j=1}^k w_j\Big\vert^2
-2Re\Bigg\{i \nabla(\varphi\sum_{j=1}^k w_j)\cdot \A\varphi\sum_{j=1}^k \overline{w}_j\Bigg\}
$$
we study term by term.
First of all
$$
 \Big|\nabla(\varphi\sum_{j=1}^k w_j)\Big|^2
=\abs{\nabla\varphi}^2\Big\vert\sum_{j=1}^k w_j\Big\vert^2+\varphi^2\Big|\nabla(\sum_{j=1}^k w_j)\Big|^2+2\,Re\left\{\varphi\nabla\varphi\cdot\sum_{j,l}\nabla w_j \overline{w}_l\right\}
$$
and
\begin{eqnarray*}
 \int_{\R^N}\varphi^2\Big|\nabla(\sum_{j=1}^k w_j)\Big|^2
&=&\int_{\R^N}\Big|\nabla(\sum_{j=1}^k w_j)\Big|^2-\int_{\R^N}(1-\varphi^2)\Big|\nabla(\sum_{j=1}^k w_j)\Big|^2 \\ \nonumber
&=&kS^{N/2}+\int_{\R^N}Re\Big\{\sum_{j\neq l}\abs{w_j}^{2^*-2}w_j\overline{w}_l\Big\}-\int_{\R^N}(1-\varphi^2)\Big|\nabla(\sum_{j=1}^kw_j)\Big|^2\ .
\end{eqnarray*}
Secondly
\begin{eqnarray*}
 \nabla(\varphi\sum_{j=1}^k w_j)\cdot \A\varphi\sum_{j=1}^k \overline{w}_j
=\varphi\nabla\varphi\cdot \A\Big|\sum_{j=1}^k w_j\Big|^2+\varphi^2\sum_{j,l}\nabla w_j\cdot \A\, \overline{w}_l\ .
\end{eqnarray*}
So, the quadratic form is the following
\begin{eqnarray*}
 \lefteqn{kS^{N/2}+\int_{\R^N}Re\Big\{\sum_{j\neq l}\abs{w_j}^{2^*-2}w_j\overline{w}_l\}
-\int_{\R^N}(1-\varphi^2)\Big|\nabla(\sum_{j=1}^k w_j)\Big|^2
+\int_{\R^N}\abs{\nabla\varphi}^2\Big|\sum_{j=1}^k w_j\Big|^2}\\
&&+2\int_{\R^N}Re\Big\{\varphi\nabla\varphi\cdot\sum_{j,l}\nabla w_j\,\overline{w}_l\Big\}
+\int_{\R^N}\frac{\alpha^2-a}{x_1^2+x_2^2}\varphi^2\Big|\sum_{i=1}^k w_j\Big|^2 \\
&&-2\int_{\R^N}Re\Big\{i\,\varphi\nabla\varphi\cdot \A\Big|\sum_{j=1}^k w_j\Big|^2\Big\}
-2\int_{\R^N}Re\Big\{i\,\varphi^2\sum_{j,l}\nabla w_j\cdot \A\, \overline{w}_l\Big\} \\
\lefteqn{\leq kS^{N/2}+\int_{\R^N}Re\Big\{\sum_{j\neq l}\abs{w_j}^{2^*-2}w_j\overline{w}_l\}
+\int_{\R^N}\abs{\nabla\varphi}^2\Big|\sum_{j=1}^k w_j\Big|^2}\\
&&+2\left(\int_{\R^N}\varphi^2\Big|\sum_{j=1}^k\nabla w_j\Big|^2\right)^{1/2}\left(\int_{\R^N}\abs{\nabla\varphi}^2\Big|\sum_{j=1}^k w_j\Big|^2\right)^{1/2}\\
&&+\int_{\R^N} \frac{\alpha^2-a}{x_1^2+x_2^2} \varphi^2\Big|\sum_{j=1}^k w_j\Big|^2 \\ 
&&+2\left(\int_{\R^N}\abs{\nabla\varphi}^2\Big|\sum_{j=1}^k w_j\Big|^2\right)^{1/2}\left(\int_{\R^N}\varphi^2\abs{\A}^2\Big|\sum_{j=1}^k w_j\Big|^2\right)^{1/2}\\
&&-2\int_{\R^N}Re\Big\{i\,\varphi^2\sum_{j,l}\nabla w_j\cdot \A\, \overline{w}_l\Big\}\ ,
\end{eqnarray*}
whereas the denominator
\begin{eqnarray}\label{disuguaglianza denominatore AB}
 \int_{\R^N}\varphi^{2^*}\Big|\sum_{j=1}^k w_j\Big|^{2^*}&=&\int_{\R^N}\Big|\sum_{j=1}^k w_j\Big|^{2^*}
-\int_{\R^N}(1-\varphi^{2^*})\Big|\sum_{j=1}^k w_j\Big|^{2^*} \nonumber \\
&\geq& kS^{N/2}+2^*(1-\delta/2)\int_{\R^N}Re\sum_{j\neq l}\abs{w_j}^{2^*-2}w_j\,\overline{w}_l \nonumber \\
&& -\int_{\R^N}(1-\varphi^{2^*})\Big|\sum_{j=1}^k w_j\Big|^{2^*}\ .
\end{eqnarray}
To simplify the notation, we set $R=\sqrt{({x_j}^1)^2+({x_j}^2)^2}$
and we have
\begin{eqnarray}
 \alpha&=&\int_{\R^N}Re\Big\{\sum_{j\neq l}\abs{w_j}^{2^*-2}w_j\,\overline{w}_l\Big\}
\left\{ \begin{array}{ll}
         \leq O\big(\dfrac{k^{N}}{R^{N-2}}\big)\\
     \gg \dfrac{k^{N-1}}{R^{N-2}}
        \end{array} \right.
 \label{alfa2}\\
\beta&=&\int_{\R^N}\abs{\nabla\varphi}^2\Big|\sum_{j=1}^k w_j\Big|^2\ \leq\ O\Big(\frac{k^2}{R^{2N-4}}\Big) \nonumber\\
\gamma&=&2\left(\int_{\R^N}\varphi^2\Big|\nabla(\sum_{j=1}^k w_j)\Big|^2\int_{\R^N}\abs{\nabla\varphi}^2\Big|\sum_{j=1}^k w_j\Big|^2\right)^{1/2}\ \leq \ O\Big(\frac{k^{3/2}}{R^{N-2}}\Big) \nonumber\\
\eta&=&\int_{\R^N}\frac{\alpha^2-a}{x_1^2+x_2^2}\varphi^2\Big|\sum_{j=1}^k w_j\Big|^2\
\leq\ \left\{ \begin{array}{ll}
    O\big(\dfrac{k^2}{R^2}\log R\big) \quad \textrm{if $N=4$} \\
    O\big(\dfrac{k^2}{R^2}\big) \quad \textrm{if $N\geq 5$} \nonumber
   \end{array} \right. \\
\xi&=&2\left(\int_{\R^N}\abs{\nabla\varphi}^2\Big|\sum_{j=1}^k w_j\Big|^2\right)^{1/2}\left(\int_{\R^N}\varphi^2\abs{\A}^2\Big|\sum_{j=1}^k w_j\Big|^2\right)^{1/2}\nonumber\\
&\leq&\ \left\{ \begin{array}{ll}
            O\big(\dfrac{k^2}{R^{3}}\log^{1/2}R\big) \quad \textrm{if $N=4$}  \\
        O\big(\dfrac{k^2}{R^{N-1}}\big) \quad \textrm{if $N\geq 5$} \label{xi2}
           \end{array} \right. \\
\psi&=&\int_{\R^N}(1-\varphi^{2^*})\Big|\sum_{j=1}^k w_j\Big|^{2^*}\ \leq\ O\big(\frac{k^2}{R^{2N}}\big)\nonumber
\end{eqnarray}
while for the last term we have
\begin{equation}\label{zeta2}
 \abs{\int_{\R^N} Re \Big\{ i\,\varphi^2 \sum_{j,l}\nabla w_j \cdot \A \overline{w}_l \Big\}} \leq O\big(\dfrac{k^{N-3}}{R^{N-2}}\big)
\end{equation}
since Lemma (\ref{asintotico gamma totale}) fits also in this case with the suitable modifications.
In (\ref{alfa2}) the symbol $\gg$ stands for $\alpha$ has order strictly greater than ${k^{N-1}}/{R^{N-2}}$.

We note all these quantities $\alpha$, $\beta$, $\gamma$, $\eta$, $\xi$, $\zeta$ can be chosen small simply taking
the quotient $k^{N-1}/R^{N-2}$ small (namely $k^{N-1}/R^{N-2}=\varepsilon$), as we can deduce from (\ref{alfa2}),
\dots, (\ref{zeta2}).

Moreover, we see $\psi=o(\alpha)$, so that we can improve estimate (\ref{disuguaglianza denominatore AB}) and state
$$
 \int_{\R^N}\varphi^{2^*}\Big|\sum_{j=1}^k w_j\Big|^{2^*}
\geq kS^{N/2}+2^*(1-\delta/2)\int_{\R^N}Re\sum_{j\neq l}\abs{w_j}^{2^*-2}w_j\,\overline{w}_l
$$
for a different $\delta$ from above.

With the simplified notation, the quotient takes the form
\begin{eqnarray*}
 \frac{kS^{N/2}+\alpha+\beta+\gamma+\eta+\xi+\zeta}{\big(kS^{N/2}+2^*(1-\delta/2)\alpha\big)^{2/2^*}}=k^{2/N}S\,\frac{\displaystyle1+\frac{1}{kS^{N/2}}(\alpha+\beta+\gamma+\eta+\xi+\zeta)}{\displaystyle\Big(1+\frac{2^*(1-\delta/2)}{kS^{N/2}}\alpha\Big)^{2/2^*}}\ .
\end{eqnarray*}
Expanding the quotient in first order power series, it is asymptotic
to
\begin{eqnarray*}
 k^{2/N}S && \Big(1+\frac{1}{kS^{N/2}}(\alpha+\beta+\gamma+\eta+\xi+\zeta)\Big)\Big((1-\frac{2(1-\delta/2)}{kS^{N/2}}\alpha\Big)\\
\sim k^{2/N}S && \Big\{1+\frac{1}{kS^{N/2}}\big(\beta+\gamma+\eta+\xi+\zeta\big)\\
&& +\frac{1}{kS^{N/2}}\alpha\Big(-1+\delta+\frac{1}{kS^{N/2}}\big(\beta+\gamma+\eta+\xi+\zeta\big)\Big)\Big\}
\end{eqnarray*}
Now, in order to have the coefficient of $k^{2/N}S$ strictly less than 1, it is sufficient that $\beta$, $\gamma$, $\eta$, $\xi$, $\zeta$ are $o(k^{N-1}/R^{N-2})$.
Taking into account (\ref{alfa2}), $\dots$, (\ref{xi2}) and (\ref{zeta2})
we see it is sufficient choosing
$k$ as in the previous case of $\dfrac{A}{\abs{x}}$-type potentials. \qed
\end{proof}

\bigskip

As we made in the previous section, we wonder if there exists any
biradial solution, meaning a function belonging to the space
\begin{eqnarray*}
\mathcal H_\A^{r_1,r_2}=\{u\in\mathcal H_\A\ \textrm{s.t.}\
&u(R(x_1,x_2),Sx_3)=u((x_1,x_2),x_3)\\
&\forall R\in SO(2)\,,\forall S\in SO(N-2)\}\ .
\end{eqnarray*} In order to
investigate this question, we set the problem
$$S_{\A,a}^{r_1,r_2}=\inf_{u\in\mathcal H_\A^{r_1,r_2}}\frac{\displaystyle\int_{\R^N}\abs{(i\nabla - A)u}^2-\int_{\R^N}\frac{a}{x_1^2+x_2^2}\abs{u}^2}{\displaystyle\left(\int_{\R^N}\abs{u}^{2^*}\right)^{2/2^*}}\,,$$
and we state
\begin{prop}\label{teoesistdrsolAB}
 There exists a biradial solution.
\end{prop}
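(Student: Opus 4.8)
The plan is to mimic the proof of Proposition (\ref{teoesistdrsol}), working throughout in the closed subspace $\mathcal H_\A^{r_1,r_2}\subset\mathcal H_\A$ (so that Solimini's Theorem (\ref{solimini}) is available there) and replacing Lemma (\ref{lemmadivergenzatraslazione}) by its Aharonov--Bohm counterpart, Lemma (\ref{lemmadivergenzatraslazioneAB}).

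First I would take a minimizing sequence $\{u_n\}$ for $S_{\A,a}^{r_1,r_2}$, normalized in $L^{2^*}$; it is bounded in $\mathcal H_\A^{r_1,r_2}$ since there the quadratic form is equivalent to $\int_{\R^N}|(i\nabla-\A)u|^2$, which in turn dominates the Dirichlet norm. Applying Solimini's decomposition and then stripping off the $L^{2^*}$-vanishing terms exactly as in the proofs of Theorem (\ref{sa<s}) and of its Aharonov--Bohm version --- this uses the weak convergences together with $\big|\,|a+b|^{2^*}-|a|^{2^*}-|b|^{2^*}\,\big|\le C\big(|a|^{2^*-1}|b|+|a|\,|b|^{2^*-1}\big)$ and the superadditivity of the Sobolev quotient --- I reduce to a nontrivial profile $\Phi\neq 0$ with $u_n-\Phi(\cdot+x_n)\to 0$ in $\mathcal H_\A^{r_1,r_2}$. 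If $\{x_n\}$ is bounded this already yields a biradial minimizer, so one argues by contradiction assuming $|x_n|\to\infty$.

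Now I fix an arbitrary $k\in\N$ and regard $\{u_n\}$ as a sequence in $\mathcal H_\A^k$ (every biradial function is $\Z_k$-invariant in $z$ and radial in $y$). As in the proof of Theorem (\ref{s^kachieved}), preservation of the symmetry forces the escaping profile to occur together with its $k$ rotations by $2\pi/k$ in the $(x_1,x_2)$-plane, so $u_n-\sum_{j=1}^{k}\Phi(\cdot+x_n^j)\to 0$ in $\mathcal H_\A^k$; moreover $|(x_n^1,x_n^2)|\to\infty$ (a bounded $(x_1,x_2)$-component would turn the $\R^{N-2}$-directional divergence into a compactness case, by Lemma (\ref{lemmadivergenzatraslazioneAB})), so the centres $x_n^1,\dots,x_n^k$ are mutually divergent. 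Evaluating the quotient on $\sum_{j=1}^k\Phi(\cdot+x_n^j)$ just as in the proofs of Theorem (\ref{s^kachieved}) and Lemma (\ref{valangadicontiAB}) --- Lemma (\ref{lemmadivergenzatraslazioneAB}) annihilates the $\A$- and $a$-terms, while the inequality above, applied $k-1$ times, annihilates the cross terms in numerator and denominator --- one gets
$$
S_{\A,a}^{r_1,r_2}\ =\ \frac{k\displaystyle\int_{\R^N}|\nabla\Phi|^2}{\Big(k\displaystyle\int_{\R^N}|\Phi|^{2^*}\Big)^{2/2^*}}\ =\ k^{2/N}\,\frac{\displaystyle\int_{\R^N}|\nabla\Phi|^2}{\Big(\displaystyle\int_{\R^N}|\Phi|^{2^*}\Big)^{2/2^*}}\ \ge\ k^{2/N}\,S .
$$
Letting $k\to\infty$ forces $S_{\A,a}^{r_1,r_2}=+\infty$, which is absurd, since the quotient is finite, e.g., on any nonzero smooth biradial function supported in $\R^N\setminus\{x_1=x_2=0\}$ (there $\A\in L^\infty$, $a$ is constant, and $1/(x_1^2+x_2^2)$ is bounded). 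Hence $\{x_n\}$ is bounded, $u_n\to\Phi$ strongly in $\mathcal H_\A^{r_1,r_2}$, $\Phi$ is a biradial minimizer, and by the Symmetric Criticality Principle ($\A$ and the quotient being $SO(2)\times SO(N-2)$-invariant) $\Phi$ solves the equation, which is the asserted biradial solution.

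The delicate point is the symmetry-propagation step: making rigorous that, for every $k$, Solimini's (countable, $\ell^p$-summable) decomposition of the biradial minimizing sequence must contain the whole $\Z_k$-orbit of the escaping bubble, and that no escape along the axis $\{x_1=x_2=0\}$ survives (otherwise applying $SO(N-2)$ would produce an $(N-3)$-sphere of mutually divergent translates, i.e. infinitely many profiles of equal norm, contradicting the summability (\ref{solimini2})). Everything else is a routine transcription of estimates already carried out in Section 4 and in the present one.
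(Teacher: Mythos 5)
Your argument is exactly the paper's: the proof of Proposition (\ref{teoesistdrsolAB}) is carried out by transposing Proposition (\ref{teoesistdrsol}) to $\mathcal H_\A^{r_1,r_2}$ — Solimini decomposition in the closed symmetric subspace, reduction to a single escaping profile, symmetry forcing $k$ rotated copies for arbitrary $k$, and the resulting bound $S_{\A,a}^{r_1,r_2}\geq k^{2/N}S$ giving a contradiction as $k\to\infty$, with Lemma (\ref{lemmadivergenzatraslazioneAB}) replacing Lemma (\ref{lemmadivergenzatraslazione}). Your additional remarks (excluding escape along $\{x_1=x_2=0\}$ via the $SO(N-2)$-orbit and Solimini summability, and finiteness of the infimum) only make explicit points the paper leaves implicit, so the proposal is correct and essentially identical in approach.
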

\begin{proof}
We follow the proof of Proposition (\ref{teoesistdrsol}) that fits also in this case with the suitable modifications. \qed

\end{proof}

\section{Symmetry breaking}

In order to proceed in our analysis, we need to recall a result proved in \cite{ATdr09}:
\begin{thm}(\cite{ATdr09})\label{doublyradialth}
 Suppose $u=u(r_1,r_2)$ (where $r_1=\sqrt{x_1^2+x_2^2}$ and $r_2=\sqrt{x_3^2+\cdots+x_N^2}$) is a solution to
$$
 -\Delta u-\frac{a}{\abs{x}^2}u=f(x,u)
$$
with $a\in\R^-$ and $f:\R^N\times\C\rightarrow\C$ being a Carath\'{e}odory function, $C^1$ with respect to $z$, such that it satisfies the growth restriction
$$\abs{f'_z(x,z)} \leq C(1+\abs{z}^{2^*-2})$$
for a.e. $x\in\R^N$ and for all $z\in\C$.

If the solution $u$ has Morse index $m(u)\leq1$, then $u$ is a radial solution, that is $u=u(r)$ where $r=\sqrt{x_1^2+\cdots+x_N^2}$.
\end{thm}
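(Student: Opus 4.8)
The plan is to use the standard comparison between the Morse index computed in the full space and the spectral decomposition of the linearized operator into spherical harmonics, exploiting that a biradial function carries only a restricted set of angular modes. First I would set up the linearization of the equation at the solution $u$; since $u$ solves $-\Delta u-\frac{a}{\abs{x}^2}u=f(x,u)$ with $f$ differentiable in $z$ and subcritically controlled derivative, the quadratic form of the linearized operator
$$
Q_u(\varphi)=\int_{\R^N}\abs{\nabla\varphi}^2-\int_{\R^N}\frac{a}{\abs{x}^2}\abs{\varphi}^2-\int_{\R^N}f'_z(x,u)\abs{\varphi}^2
$$
is well defined on $\dom$, and the Morse index $m(u)$ is the maximal dimension of a subspace on which $Q_u$ is negative definite. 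Because $u=u(r_1,r_2)$ and the potentials depend only on $\abs{x}$, the coefficient $f'_z(x,u)$ is itself biradial, so $Q_u$ commutes with the $SO(2)\times SO(N-2)$ action; one may then diagonalize $Q_u$ over the irreducible representations, i.e.\ over the joint eigenspaces of the Laplace--Beltrami operators on the two factor spheres $\sphere{1}$ and $\sphere{N-2}$.

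Next I would isolate the ``first non-radial mode''. The radial sector (angular eigenvalue zero on both factors) contains the solution $u$ itself, and since $u$ is a (least-energy type) solution one expects $Q_u(u)\le 0$, giving one negative direction already there; more importantly, I would produce an explicit test function of the form $\psi=\partial_{r_1}u\cdot g(\theta)$, or rather the functions obtained by differentiating $u$ with respect to the first two Cartesian coordinates $x_1,x_2$, which lie in the lowest nontrivial $SO(2)$-mode and are orthogonal to the radial sector. The key computation is to show $Q_u(\partial_{x_1}u)<0$ (and likewise for $\partial_{x_2}u$): one differentiates the equation in $x_1$, which produces $-\Delta(\partial_{x_1}u)-\frac{a}{\abs{x}^2}\partial_{x_1}u-f'_z(x,u)\partial_{x_1}u=\partial_{x_1}f(x,u)-(\text{terms from }\partial_{x_1}\frac{a}{\abs{x}^2})$, and the inhomogeneous right-hand side — coming from the fact that $\frac{a}{\abs{x}^2}$ and $x\mapsto f$ are \emph{not} translation invariant in $x_1$ — is what makes $Q_u(\partial_{x_1}u)$ strictly negative rather than zero. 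These two functions $\partial_{x_1}u,\partial_{x_2}u$ are linearly independent and span a $2$-dimensional space on which $Q_u<0$, and they are $L^2$-orthogonal to anything in the radial sector; combined with the negative direction in the radial sector this forces $m(u)\ge 2$, contradicting $m(u)\le 1$. Hence $u$ cannot depend genuinely on $r_1$ and $r_2$ separately, i.e.\ $u=u(r)$.

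The main obstacle is the strict-sign computation $Q_u(\partial_{x_i}u)<0$: one must verify that the boundary/inhomogeneous contribution has the correct sign and does not vanish, and simultaneously that $\partial_{x_i}u\in\dom$ (an integrability check near the origin, where $\frac{a}{\abs{x}^2}$ is singular and $u$ may decay only polynomially, and at infinity, where the finite-energy decay of $u$ must survive one differentiation). A secondary technical point is justifying the spectral decomposition of $Q_u$ into angular modes in the presence of the inverse-square singularity — but this is exactly the kind of Hardy-controlled form analysis already invoked via \cite{FFT08}, so I would cite it rather than redo it. Once the sign computation is in place, the index count is immediate, and the theorem follows; I would therefore refer to \cite{ATdr09} for the detailed estimates and present here only the structure above.
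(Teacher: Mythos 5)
The paper does not actually prove this theorem; it is quoted from \cite{ATdr09}. Your general framework (the linearized form $Q_u$, equivariance under $SO(2)\times SO(N-2)$, decomposition into angular modes, and an index count against $m(u)\le 1$) is the right one, but the step carrying all the weight fails. The claim $Q_u(\partial_{x_1}u)<0$, $Q_u(\partial_{x_2}u)<0$ is unjustified: for $f$ independent of $x$ (the only case where you may even differentiate, since $f$ is merely Carath\'eodory in $x$ and the term $\partial_{x_1}f(x,u)$ need not exist), differentiating the equation gives $L_u(\partial_i u)=a\,u\,\partial_i(\abs{x}^{-2})$, hence after integration by parts $Q_u(\partial_i u)=a\int_{\R^N} u^2\big(\abs{x}^{-4}-4x_i^2\abs{x}^{-6}\big)$. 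For a general biradial $u$ this has no definite sign: if $u$ concentrates near $\{r_2=0\}$ the bracket averages to a negative quantity and $a<0$ makes $Q_u(\partial_1u)>0$, while concentration near $\{r_1=0\}$ gives the opposite sign. Worse, the plan is structurally flawed: $\partial_{x_1}u,\partial_{x_2}u$ are nonzero for \emph{every} nontrivial solution, radial or not, so nowhere do you use the possible failure of radial symmetry; if your sign claim were true in general, the argument would show that \emph{no} solution (in particular no radial one) has Morse index $\le 1$, making the theorem vacuous rather than proving it. Finally, for the way the theorem is used in Section 6 (the minimizer of the biradial Rayleigh quotient only has Morse index $\le 1$ with respect to biradial variations), the competitors must be taken in the biradial class, and $\partial_{x_1}u,\partial_{x_2}u$ are not biradial, so they are not admissible test directions at all.

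The missing idea is to use the rotations that mix the two blocks, i.e.\ the derivatives $w_{ij}=x_i\partial_j u-x_j\partial_i u$ with $i\in\{1,2\}$, $j\in\{3,\dots,N\}$. For a biradial $u$ one has $w_{ij}=x_ix_j\big(r_2^{-1}\partial_{r_2}u-r_1^{-1}\partial_{r_1}u\big)$, so these vanish \emph{exactly} when $u$ is radial; and since $\abs{x}^{-2}$ and $f$ are invariant under these rotations, the $w_{ij}$ are genuine zero modes of $L_u$, with no inhomogeneous term whose sign must be guessed. They live in the angular sector which is the product of first-order harmonics on the two factor spheres; if $u$ is not radial this sector contains a nontrivial element of the kernel, and replacing its angular eigenvalue by the strictly smaller one of a first-order harmonic on a single factor (keeping the same profile in $(r_1,r_2)$) makes the quadratic form strictly negative. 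Since the lower sectors have multiplicity at least two and are mutually orthogonal, this yields at least two independent negative directions, contradicting $m(u)\le 1$; hence $u$ must be radial. This comparison-of-angular-modes mechanism, which converts non-radiality into an index bound, is what your proposal lacks, and it cannot be replaced by the translation derivatives; for the detailed justification (admissibility of the test functions near the singularity, Hardy-type control of the form) one should indeed refer to \cite{ATdr09}.
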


Roughly speaking, in particular this theorem states any biradial
solution to (\ref{eqintro}) found as minimizer of the Sobolev
quotient is in fact completely radial, since such a solution and
equation (\ref{eqintro}) satisfies the hypothesis of Theorem
(\ref{doublyradialth}). This leads to a so-called solutions'
symmetry breaking. We point out the notation we used up to now:
\begin{defn}
 $S_{A,a}^{r_1,r_2}$ is the minimum of the Rayleigh quotient related to the magnetic Laplacian over all the biradial functions in $\dom$;

$S_{0,a}^{r_1,r_2}$ is the minimum of the Rayleigh quotient related
to the usual Laplacian over all the biradial functions in $\dom$;

$S_{0,a}^{\textrm rad}$ is the minimum of the Rayleigh quotient
related to the usual Laplacian over all the radial functions in
$\dom$;

$S_{0,a}^{k}$ is the minimum of the Rayleigh quotient related to the
usual Laplacian over all the functions in $\domk$;

$S_{A,a}^{k}$ is the minimum of the Rayleigh quotient related to the
magnetic Laplacian over all the functions in $\domk$;

$S$ is the usual Sobolev constant for the embedding
$\dom\hookrightarrow L^{2^*}(\R^N)$.
\end{defn}

So we can collect our information on these quantities and write the
following chain of relations:
$$
 S_{A,a}^{r_1,r_2} \geq S_{0,a}^{r_1,r_2} = S_{0,a}^{\textrm rad} \geq k^{2/N}S > S_{A,a}^{k}
$$
where the first inequality holds thanks to diamagnetic inequality; the fact $S_{0,a}^{r_1,r_2} = S_{0,a}^{\textrm rad}$ is a straightforward consequence of Theorem (\ref{doublyradialth}); the second inequality is proved in \cite{Ter96}, Section 6 for sufficiently large values of $\abs{a}$; and the last one is proved in Lemma (\ref{valangadiconti}).

\begin{rem}\emph{Symmetry breaking for Aharonov-Bohm electromagnetic potentials.}
We note the same facts hold also for Aharonov-Bohm electromagnetic
fields. Indeed, the diamagnetic inequivalence holds also for them
with the \emph{same} best constant, because the Hardy constant is
the same (see Section 4.1); moreover, $\frac{a}{x_1^2+x_2^2}\geq
\frac{a}{\abs{x}^2}$ for $a>0$. So we can rewrite
$$
S_{\A,a}^{r_1,r_2} \geq
S_{0,a}^{r_1,r_2} = S_{0,a}^{\textrm rad} \geq k^{2/N}S >
S_{\A,a}^{k}
$$
where the last inequivalence has been proved in Lemma (\ref{valangadicontiAB}).
\end{rem}


\begin{thebibliography}{99}

\bibitem{ATdr09} L. Abatangelo, S. Terracini, {\em A note on the complete rotational invariance of biradial solutions to semilinear elliptic equations\/}.  Preprint (2009).

\bibitem{AS03} G. Arioli, A. Szulkin, {\em A semilinear Schr\"{o}dinger equation in the presence of a magnetic field\/}.   Arch. Ration. Mech. Anal. {\bf 170} (2003), no. 4, 277-295.

\bibitem{BDP06} T. Bartsch, E.N. Dancer, S. Peng, {\em On multi-bump semi-classical bound states of nonlinear Schr\"{o}dinger equations with electromagnetic fields \/}.  Adv. Differential Equations  11  (2006),  no. 7, 781-812.

\bibitem{BL90} A. Bahri, Y.Y. Li, {\em On a min-max procedure for the existence of positive solutions for certain scalar field equations in $\R^N$\/}.  Rev. Mat. Iberoamericana {\bf 6} (1990), 1-15.

\bibitem{BB07} J. Bourgain, H. Brezis, {\em New estimates for elliptic equations and Hodge type systems\/}.  J. Eur. Math. Soc. {\bf 9} (2007), No. 2, 277-315.

\bibitem{BN83} H. Brezis, L. Nirenberg, {\em Positive solutions of nonlinear elliptic equations involving critical Sobolev exponents\/}.   Comm. Pure Appl. Math.  {\bf 36}  (1983), no. 4, 437-477.

\bibitem{ChS05} J. Chabrowski, A. Szulkin, {\em On the Schr\"{o}dinger equation involving a critical Sobolev exponent and magnetic field \/}.  Topol. Methods Nonlinear Anal.  25  (2005),  no. 1, 3-21.

\bibitem{C03} S. Cingolani, {\em Semiclassical stationary states of nonlinear Schr\"{o}dinger equations with an external magnetic field \/}.  J. Differential Equations  188  (2003),  no. 1, 52-79.

\bibitem{CC09} S. Cingolani, M. Clapp, {\em Intertwining semiclassical bound states to a nonlinear magnetic Schrödinger equation \/}.  Nonlinearity  22  (2009),  no. 9, 2309-2331.

\bibitem{CS02} S. Cingolani, S. Secchi, {\em Semiclassical limit for nonlinear Schr\"{o}dinger equations with electromagnetic fields\/}. J. Math. Anal. Appl. {\bf 275} (2002), no. 1, 108-130. 

\bibitem{CS05} S. Cingolani, S. Secchi, {\em Semiclassical states for NLS equations with magnetic potentials having polynomial growths\/}. J. Math. Phys. {\bf 46} (2005), no. 5. 

\bibitem{CIS09} M. Clapp, R. Iturriaga, A. Szulkin, {\em Periodic and Bloch solutions to a magnetic nonlinear Schr\"{o}dinger equation \/}.  Adv. Nonlinear Stud.  9  (2009),  no. 4, 639-655.

\bibitem{CS10} M. Clapp, A. Szulkin, {\em Multiple solutions to a nonlinear Schr\"{o}dinger equation with Aharonov-Bohm magnetic potential\/}. Nonlinear Differ. Equ. Appl. {\bf 17} (2010), 229-248.

\bibitem{E} L.C. Evans, {\em Partial differential equations\/}. AMS Press.

\bibitem{EL89} M. Esteban, P.L. Lions, {\em Stationary solutions of nonlinear Schr\"{o}dinger equations with an external magnetic field\/}. In: Partial differential equations and the calculus of variations, Vol. I,  401-449, Progr. Nonlinear Differential Equations Appl., 1, Birkhäuser Boston, Boston, MA, 1989.

\bibitem{FFT08} V. Felli, A. Ferrero, S. Terracini, {\em Asymptotic behavior of solutions to Schr\"{o}dinger equations near an isolated singularity of the electromagnetic potential\/}. To appear in JEMS.

\bibitem{FMT08} V. Felli, E.M. Marchini, S. Terracini, {\em On the behavior of solutions to Schrö\"{o}dinger equations with dipole-type potentials near the singularity\/}. Discrete Contin. Dynam. Systems, {\bf 21} (2008), 91-119.

\bibitem{K97} K. Kurata, {\em A unique continuation theorem for the Schr\"{o}dinger equation with singular magnetic field \/}. Proc. Amer. Math. Soc. {\bf 125} (1997), no. 3, 853-860.

\bibitem{K00} K. Kurata, {\em Existence and semi-classical limit of the least energy solution to a nonlinear Schr\"{o}dinger equation with electromagnetic fields \/}. Nonlinear Anal. {\bf 41} (2000), no. 5-6, Ser. A: Theory Methods, 763-778.

\bibitem{LW99} A. Laptev, T. Weidl, {\em Hardy inequalities for magnetic Dirichlet forms\/}. Mathematical results in quantum mechanics (Prague, 1998), 299-305, Oper. Theory Adv. Appl., 108, Birkh\"{a}user, Basel, 1999.

\bibitem{Lein83} H. Leinfelder, {\em Gauge invariance of Schr\"{o}dinger operators and related spectral properties\/}. J. Operator Theory {\bf 9} (1983), No. 1, 163-179.

\bibitem{P03} A.A. Pankov, {\em On nontrivial solutions of the nonlinear Schr\"{o}dinger equation with a magnetic field \/}.  Funct. Anal. Appl.  {\bf 37}  (2003),  no. 1, 75-77

\bibitem{RM05} G. Rozenblum, M. Melgaard, Schr\"{o}dinger operators with singular potentials, In {\em Stationary partial differential equations, Vol. II\/}. Handb. Differ. Equ. Elsevier/North Holland, Amsterdam (2005), 407-517.

\bibitem{RS75} M. Reed, B. Simon, {\em Methods of modern mathematical physics, Vol. II\/}. San Diego Academic Press, Inc., 1975.

\bibitem{ST02} I. Schindler, K. Tintarev, {\em A nonlinear Schr\"{o}dinger equation with external magnetic field \/}.  Rostock. Math. Kolloq.  No. 56  (2002), 49-54.

\bibitem{Sol95} S. Solimini, {\em A note on compactness-type
properties with respect to Lorentz norms of bounded subsets of a
Sobolev space\/}. Ann. Inst. H. Poincar\'{e} Anal. Non Lin\'{e}aire {\bf 12} (1995), No. 3, 319-337.


\bibitem{Ter96} S. Terracini, {\em On positive entire solutions to a class of equations with a singular coefficient and critical exponent\/}.  Adv. Differential Equations {\bf 1} (1996), No. 2, 241-264.

\end{thebibliography}
\end{document}